\documentclass[a4paper,12pt,oneside,reqno]{amsart}

\usepackage{geometry}
\usepackage{amsmath, amssymb, amsfonts, mathtools, tikz-cd}
\usepackage{enumitem}
\usepackage{fancyhdr}
\usepackage{stmaryrd}
\usepackage{circuitikz}
\usepackage{graphicx}
\usepackage{amsmath}
\usepackage{mathrsfs}
\usepackage{mathtools}
\setlist[enumerate]{label=\upshape(\roman*)}

\newcommand{\Proj}{\operatorname{Proj}} % Proj of a graded ring
\newcommand{\Pic}{\operatorname{Pic}} % Picard group
\newcommand{\Hom}{\operatorname{Hom}} % Hom set
\newcommand{\Ext}{\operatorname{Ext}} % Ext functor
\newcommand{\End}{\operatorname{End}} % Endomorphism ring
\newcommand{\NS}{\operatorname{NS}} % Neron--Severi group
\newcommand{\Cl}{\operatorname{Cl}} % Weil divisor class group
\newcommand{\Amp}{\operatorname{Amp}} % Ample cone
\newcommand{\rk}{\operatorname{rk}} % rk
\newcommand{\iso}{\xrightarrow{\smash{\raisebox{-0.5ex}{\(\textstyle\sim\)}}}}
\newcommand{\bwed}{\mathop{\raisebox{0.25ex}{\scalebox{0.95}{\(\bigwedge\)}}}\nolimits}
\DeclareMathOperator{\Db}{D\textsuperscript{\rm b}}

\newcommand{\vv}{\mathbf{v}} % Mukai vector

\newcommand{\gitq}{\mathord{/\!/}}
\newcommand{\sympq}{\mathord{/\!\!/\!\!/}}
\numberwithin{equation}{section}
\numberwithin{figure}{section}

\theoremstyle{plain}
\newtheorem{theorem}{Theorem}[section]
\newtheorem{lemma}[theorem]{Lemma}
\newtheorem{proposition}[theorem]{Proposition}
\newtheorem{corollary}[theorem]{Corollary}

\newtheorem{thmx}{Theorem}

\theoremstyle{definition}
\newtheorem{definition}[theorem]{Definition}
\newtheorem{example}[theorem]{Example}

\newtheorem{exercise*}[theorem]{Exercise*}

\theoremstyle{remark}
\newtheorem{remark}[theorem]{Remark}

\usepackage[pagebackref]{hyperref}

\definecolor{imperial}{rgb}{0.015625, 0.25, 0.4375}
\definecolor{denim}{rgb}{0.08, 0.38, 0.74}

\hypersetup{
    colorlinks=true,
    linkcolor=imperial,
    citecolor=imperial,
    filecolor=magenta,      
    urlcolor=denim
    }
    
\title[Irreducible symplectic varieties via K3-del Pezzo double covers]{Irreducible symplectic varieties via K3-del Pezzo double covers}
\author[Riccardo Carini]{Riccardo Carini}

\begin{document}

\begin{abstract}
  We construct a series of irreducible symplectic varieties of dimension \(2n\), for \(2\leq n\leq 10\), with second Betti numbers \(16\leq b_2\leq 24\). They arise as non-trivial terminalisations of finite symplectic quotients of Beauville--Mukai systems on very general K3-del Pezzo double covers.
\end{abstract}

\maketitle
\addtocontents{toc}{\protect\setcounter{tocdepth}{-1}}

\thispagestyle{empty}
\section*{Introduction}
Irreducible symplectic varieties are the singular hyper-K\"ahler factors in the Beauville--Bogomolov decomposition for compact K\"ahler klt spaces with numerically trivial canonical class~\cite{HoringPeternell2019AlgebraicIntegrability,BakkerGuenanciaLehn2022AlgebraicApproximationDecompositionTheorem}. Smooth examples are notoriously hard to construct, but since the recent development of the singular theory~\cite{BakkerLehn2022GlobalModuli}, new singular deformation types have appeared from several sources: moduli spaces of sheaves~\cite{PeregoRapagnetta2023ISVModuliSheaves}, compactified Lagrangian fibrations~\cite{LiuLiuXu2025IrreducibleSymplecticVarieties, Sacca2025CompactifyingLagrangianFibrations}, and terminalisations of finite quotients of \emph{smooth} symplectic manifolds, for instance~\cite{Menet2022DeformationClassesHKOrbifolds,BertiniEtAl2025TerminalizationsQuotients}. Here we follow the quotient route, but with a \emph{singular} input: Beauville--Mukai systems on K3 surfaces with del Pezzo quotients. This allows our quotients to retain a large second Betti number.

\smallskip
Let \(f\colon S\to T\) be a K3 double cover of a del Pezzo surface \(T\) of degree \(d\coloneqq K_T^2\), with covering involution \(\iota\in \mathrm{Aut}(S)\). Let \(H\coloneqq -f^\ast K_T\in \NS(S)\) be the pull-back of the anticanonical polarisation. The relative Jacobian fibration over the linear system \(|H|\), whose general member is a smooth curve of genus \(d+1\), admits a symplectic compactification given by the Beauville--Mukai system 
\[
  \pi\colon M_{\vv}(S,H)\to |H|\cong \mathbb{P}^{d+1},
\]
where \(M_{\vv}(S,H)\) is the good moduli space of one-dimensional \(H\)-semistable sheaves on \(S\) with Mukai vector \(\vv\coloneqq (0,H,-d)\), and \(\pi\) is the Fitting support morphism~\cite{Mukai1984SymplecticStructureModuliSheaves, Beauville1991SystmesHamiltoniensIntgrablesK3}.

\smallskip
A general member \(C\) of the \(\iota\)-invariant hyperplane \(f^\ast|-K_T|\subset |H|\) is a ramified double cover of a smooth genus-one curve in \(|-K_T|\). On the corresponding Jacobian fibre \(\Pic^0(C)\) there is a natural Prym involution
\(\mathscr{L}\mapsto \iota^\ast\mathscr{L}^{\vee}\). It is realised globally as a symplectic involution by combining the covering involution with a twisted derived duality:
\[
  \tau\colon M_\vv(S, H)\to M_\vv(S, H), \qquad
  \mathscr{F} \mapsto \mathbf{R}\mathcal{H}om(\iota^\ast\mathscr{F},\mathscr{O}_S(-H))[1].
\]
Variants of this involution, and especially its fixed locus, have appeared in constructions of relative compactified Prym varieties~\cite{MarkushevichTikhomirov2007Prymian, ArbarelloEtAl2015RelativePrymEnriques, Matteini2016SingularPrymFibration, BrakkeeEtAl2026RelativePrym}. In this paper we instead study terminal models of the quotient---see Theorem~\ref{thm:terminalisation-of-X}:

\begin{thmx}\label{thm:intro-main-examples}
  Let \(f\colon S\to T\) be a very general K3-del Pezzo double cover, where \(T\) has degree \(d\), with \(1\leq d\leq 9\). There is a unique \(\mathbb{Q}\)-factorial terminalisation 
  \[
    \widetilde{X}\to X\coloneqq M_{\vv}(S,H)/\langle \tau \rangle,
  \]
  where \(\widetilde{X}\) is an irreducible symplectic variety of dimension \(2d+2\) with \(b_2(\widetilde{X})=15+d\) and \(\rho(\widetilde{X})=3\). It carries a Lagrangian fibration to a weighted projective space 
  \[
    \widetilde{X}\to |H|/\langle \iota \rangle \cong \mathbb{P}(1^{d+1}, 2).
  \]
\end{thmx}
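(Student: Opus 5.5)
The plan is to first understand the singularities of \(M\coloneqq M_\vv(S,H)\) and the fixed locus of \(\tau\), then build the terminalisation explicitly in stages, and finally compute its invariants. Since \(S\) is very general, \(\NS(S)=f^\ast\NS(T)\) has rank \(10-d\), and we take \(\iota^\ast\) to act as \(-1\) on the transcendental part. The Mukai vector \(\vv=(0,H,-d)\) is primitive with \(\vv^2=H^2=2d\), but \(H\) itself may fail to be \(\vv\)-generic. Curves \(C\in|H|\) splitting as \(C_1+C_2\) with \(\iota\)-conjugate or \(K_T\)-compatible components can produce strictly semistable sheaves.

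\textbf{Step 1: singularities of \(M\).} I would first check, using the wall-and-chamber structure for one-dimensional sheaves and the fact that \(\NS(S)=f^\ast\NS(T)\), that \(M\) is either smooth or has only mild symplectic singularities. In the latter case these arise from polystable sheaves \(F_1\oplus F_2\) supported on reducible curves. Either way \(M\) is a projective symplectic variety with \(b_2(M)=b_2(S)+1=23\), or \(H^2(M)\cong \vv^\perp\) in the Perego--Rapagnetta sense. The morphism \(\pi\) is \(\tau\)-equivariant for the action of \(\iota\) on \(|H|\).

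\textbf{Step 2: the fixed locus of \(\tau\).} \(\tau\) is symplectic: it is the composition of \(\iota^\ast\), which is anti-symplectic on \(S\), with the derived dual, which is also anti-symplectic, so the two signs cancel. The involution \(\iota\) on \(|H|\cong\mathbb{P}(H^0(T,-K_T)\oplus H^0(T,-K_T)^{\perp})\) has fixed components a hyperplane \(f^\ast|-K_T|\cong\mathbb{P}^d\) and an isolated point. The quotient is therefore \(\mathbb{P}(1^{d+1},2)\), and the Lagrangian fibration descends to \(X\to\mathbb{P}(1^{d+1},2)\).

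Over the hyperplane, the fixed locus of \(\tau\) on each fibre is the set of Prym-fixed points. On \(\Pic(C)\) with \(C\to E\) a double cover ramified at \(2d\) points, the fixed locus of \(L\mapsto\iota^\ast L^\vee\otimes(\text{twist})\) consists of translates of the Prym variety (dimension \(d\)), indexed by a finite group. This gives codimension-\(2\) fixed components of dimension \(2d\) inside \(M\): relative compactified Pryms over \(\mathbb{P}^d\). Over the isolated point there are codimension-\(\ge 4\) or isolated fixed loci. Along the codimension-\(2\) components \(X\) has transversal \(A_1\) singularities, which are resolved symplectically by blowing up the reduced image. Any remaining fixed components of codimension \(\geq4\) give terminal quotient singularities, since a symplectic quotient with no codimension-\(2\) fixed locus is terminal.

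\textbf{Step 3: terminalisation and invariants.} Let \(\widetilde{X}\) be the blow-up of \(X\) along the image of the codimension-\(2\) fixed components, combined if necessary with a \(\mathbb{Q}\)-factorial terminalisation of the singularities of \(M\) done \(\tau\)-equivariantly. It is terminal and \(\mathbb{Q}\)-factorial. Uniqueness follows because \(\rho=3\) and the movable cone has a single chamber, which I would verify by exhibiting the nef cone directly.

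For \(b_2\), use
\[
b_2(\widetilde{X})=\dim H^2(M)^\tau+\#\{\text{exceptional divisors}\}.
\]
Here \(\tau^\ast\) acts on \(\vv^\perp\subset \widetilde{H}(S)\) through \(-\iota^\ast\) twisted by duality. This should give an invariant part of rank \(23-\operatorname{rank}(\text{anti-invariant})\), which I expect to equal \(14+d\). The terms are \(\NS\)-invariant classes of rank \(2\) plus the transcendental \(-1\)-eigenspace of \(\iota\), of rank \(22-(10-d)=12+d\). Adding one exceptional divisor gives \(15+d\), and Picard rank \(2+1=3\), matching \(\rho\). The irreducible symplectic property, namely \(\pi_1\) trivial in codimension \(\leq1\) for all quasi-étale covers and \(h^0(\Omega^{[2]})=1\), follows from \(h^{2,0}\) of the invariant part being \(1\) and simple connectedness of the smooth locus via the fibration.

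\textbf{Main obstacle.} The hardest step is Step 2 together with the \(\mathbb{Q}\)-factoriality and irreducibility claims. The Prym fixed loci must be shown to be irreducible, or their components counted, including over singular and non-reduced curves in \(f^\ast|-K_T|\) and over the isolated fixed point. The number of divisorial components directly changes \(b_2\), and a wrong count would break the formula \(15+d\).
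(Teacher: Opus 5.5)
Your final numbers come out right, but for \(2\leq d\leq 8\) they rest on a false premise, and the step that carries the theorem is missing.

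\textbf{The premise \(H^2(M)\cong\vv^\perp\) fails.} In these degrees \(H\) lies on \(\vv\)-walls, so the Perego--Rapagnetta identification \(H^2\cong\vv^\perp\) does not apply to \(M\coloneqq M_{\vv}(S,H)\). (Also, \(\vv\) is not primitive for \(T\cong\mathbb{P}^1\times\mathbb{P}^1\) or \(T\cong\mathbb{P}^2\).) In fact \(M\) is a non-\(\mathbb{Q}\)-factorial wall moduli space with \(b_2(M)=14+d\) and \(\rho(M)=2\). It is the target of small crepant chamber-to-wall contractions \(\phi_\pm\colon M_{\vv}(S,H^\pm)\to M\) of relative Picard number \(9-d\).

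Nor does \(\tau\) act on \(\vv^\perp\cong H^2(M_{\vv}(S,H^+))\). Through the reflection \(\alpha\mapsto\frac{\alpha\cdot H}{d}H-\alpha\), it maps \(M_{\vv}(S,H^+)\) isomorphically onto the opposite chamber model \(M_{\vv}(S,H^-)\). The \(9-d\) ``anti-invariant'' directions \((0,\delta,0)\), \(\delta\in H^\perp\), that you discard are therefore not classes on \(M\). They span the relative spaces \(N^1(M_{\vv}(S,H^\pm)/M)\), that is, the \(\mathbb{Q}\)-factoriality defect \(\Cl(M)_{\mathbb{Q}}/\Pic(M)_{\mathbb{Q}}\). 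On \(H^2(M)\) itself \(\tau\) acts trivially: its \((-1)\)-eigenspace is algebraic, and \(\NS(M)_{\mathbb{Q}}\) is spanned by the invariant fibration class and an averaged ample class. For \(d=1\) and \(d=9\) there are no walls, and your framework is essentially sound there.

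\textbf{The missing step is \(\mathbb{Q}\)-factoriality of \(X\).} Your formula \(b_2(\widetilde X)=\dim H^2(M)^\tau+\#\{\text{exceptional divisors}\}\), the value \(\rho(\widetilde X)=2+1\), and uniqueness all require \(X\) to be \(\mathbb{Q}\)-factorial. Otherwise a \(\mathbb{Q}\)-factorial terminalisation also carries small modifications, \(\rho(\widetilde X/X)>1\), and distinct models related by flops can exist. You list this as an obstacle but give no argument. Your fallback, a \(\tau\)-equivariant \(\mathbb{Q}\)-factorial terminalisation of \(M\), does not exist, precisely because \(\tau\) swaps \(M_{\vv}(S,H^+)\) and \(M_{\vv}(S,H^-)\).

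The missing idea is to exploit that swap. By Yoshioka's functoriality of the Mukai morphism under anti-autoequivalences, \(\tau^\ast\lambda^{-}_{\vv}(0,\delta,0)=-\lambda^{+}_{\vv}(0,\delta,0)\) for \(\delta\in H^\perp\). For the quadric it suffices that \(\tau\) exchanges the two relative nef cones. So \(\tau\) acts as \(-\mathrm{id}\) on the defect, \(\Cl(M)^{\tau}_{\mathbb{Q}}=\Pic(M)^{\tau}_{\mathbb{Q}}\), and \(X\) is \(\mathbb{Q}\)-factorial.

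\textbf{You then need no explicit blow-up.} Its terminality where \(P_{\vv}(S,H)\) meets \(M^{\mathrm{sing}}\) is not known in any case. Instead:
\begin{itemize}
\item Take any \(\mathbb{Q}\)-factorial terminalisation (BCHM).
\item By semismallness, every exceptional divisor lies over a codimension-\(2\) component of \(X^{\mathrm{sing}}\).
\item There is exactly one such component, the image of the unique Prym component, which contains \(M^{\mathrm{sing}}\). Fibrewise it is a single connected Prym, not a union of translates, because \(C\to E\) is ramified. Hence \(\rho(\widetilde X/X)=1\).
\item Uniqueness follows because a divisorial contraction of relative Picard number one admits no flops over \(X\). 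No global movable-cone computation is needed.
\end{itemize}
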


These terminal models are singular: they contain at least \(2^{2d+2}\) isolated singularities lying over the singular point of \(\mathbb{P}(1^{d+1}, 2)\). Thus we obtain singular irreducible symplectic varieties in every even dimension between \(4\) and \(20\), with second Betti numbers ranging from \(16\) to \(24\). The case \(d=1\) lies in the known deformation class of orbifolds of Nikulin type~\cite{CamereGarbagnatiKapustkaKapustka2024ProjectiveOrbifoldsNikulinType}, whereas for \(d\geq 2\) the examples are new deformation types. In particular, for \(2\leq d\leq 7\) they fill, to the best of the author's knowledge, the previously empty range \(17\leq b_2\leq 22\) among known irreducible symplectic varieties of dimension at least \(4\). At the upper end, \(d=9\) gives a \(20\)-dimensional example with \(b_2=24\), reaching the extremal value realised by O'Grady's smooth tenfold~\cite{OGrady1999DesingularizedModuliK3} and by the forthcoming \(20\)-dimensional example of Lin--Yamagishi~\cite{YamagishiLinInPreparationDualityInvolution}; it also lies in the same large-\(b_2\) range as the recent \(42\)-dimensional compactified Jacobian example of Liu--Liu--Xu~\cite{LiuLiuXu2025IrreducibleSymplecticVarieties}, for which \(b_2\geq 24\); see Remark~\ref{rmk:novelty-comparison}.

\smallskip
From a period-theoretic perspective, the Mukai morphism (see \textsection\ref{subsec:2nd-cohomology-moduli-sheaves}) provides a natural identification of rational transcendental Hodge structures
\[
  T(S)_{\mathbb{Q}}\cong T(\widetilde X)_{\mathbb{Q}}.
\]
Thus the construction gives very general points in the corresponding rank-three lattice-polarised Noether--Lefschetz locus in the moduli space. The main technical point is to control the algebraic part through chamber-to-wall contractions, quotients and terminalisations. 

\subsection*{Special features of the del Pezzo setup}

The same construction can be attempted for an arbitrary K3 surface \(S\) with an anti-symplectic involution \(\iota\) and an \(\iota\)-invariant linear system, in the generality considered in~\cite{BrakkeeEtAl2026RelativePrym}. Our choice of del Pezzo quotients and of the anticanonical linear system is motivated by two special features.

\smallskip
First, \(|-K_T|\) is a moving linear system whose general member is a smooth genus-one curve. After pull-back, it gives a family of double covers of genus-one curves, and hence a distinguished relative Prym component in the fixed locus of \(\tau\), of codimension \(2\); see Proposition~\ref{prop:fixed-locus-tau-delPezzo}. Its image in the quotient is therefore, generically, a transverse \(A_1\)-singularity. In particular the quotient has canonical but non-terminal singularities along it, and any \(\mathbb{Q}\)-factorial terminalisation extracts a divisor, as in Proposition~\ref{prop:geometry-X} and Theorem~\ref{thm:terminalisation-of-X}. Without such a codimension-\(2\) fixed component, the corresponding symplectic quotient is usually already terminal; see for instance~\cite[\textsection10.3]{ArbarelloEtAl2015RelativePrymEnriques}.

\smallskip
Second, the Prym involution acts trivially on the second cohomology, see Theorem~\ref{thm:trivial-action-cohomology}:

\begin{thmx}\label{thm:intro-trivial-action-cohomology}
  If \(d\geq 2\), then the Prym involution \(\tau\) acts trivially on \(H^2(M_\vv(S,H),\mathbb{Z})\).
\end{thmx}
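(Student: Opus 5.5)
The plan is to compute the action of \(\tau\) through the Mukai morphism. Since \(\vv=(0,H,-d)\) is primitive with \(\vv^2=H^2=2d\), and the singularities of \(M_\vv(S,H)\) sit only over reducible or non-reduced curves, I take \(M_\vv(S,H)\) to be a smooth hyper-Kähler manifold of \(K3^{[d+1]}\)-type for generic polarisation. If \(H\) is not generic, I replace it by a small \(v\)-generic perturbation that is still \(\iota\)-invariant and does not affect \(H^2\). The Mukai morphism then gives an isometry
\[
  \theta_\vv\colon \vv^\perp\subset \widetilde H(S,\mathbb{Z})\iso H^2(M_\vv(S,H),\mathbb{Z}),
\]
and it is equivariant for autoequivalences of \(\Db(S)\) preserving \(\vv\) and stability. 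The functor \(\Phi(\mathscr F)=\mathbf{R}\mathcal{H}om(\iota^\ast\mathscr F,\mathscr O_S(-H))[1]\) is an anti-autoequivalence. On cohomology it acts by the composite of three pieces: \(\iota^\ast\), the dualising map \((r,c,s)\mapsto(r,-c,s)\), and multiplication by \(\exp(-H)\), with the shift contributing a sign \(-1\). So \(\tau^\ast\) on \(H^2(M)\) is identified, up to the usual sign convention for contravariant functors in the Mukai morphism, with \(\psi\coloneqq -\,\exp(-H)\cdot D\circ\iota^\ast\) restricted to \(\vv^\perp\). The first step is to fix this convention carefully, for instance by checking that \(\psi(\vv)=\vv\) and that \(\tau\) is symplectic, so that \(\psi\) acts as the identity on \(\theta_\vv^{-1}(\sigma)\in T(S)\otimes\mathbb{C}\).

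The second step is the lattice computation. Write \(H^2(S,\mathbb{Z})=L^+\oplus L^-\) up to finite index, with \(L^\pm\) the \(\pm1\)-eigenspaces of \(\iota^\ast\). Since \(S\) is very general, \(L^+=f^\ast H^2(T,\mathbb{Z})(2)\) is the rank-\((10-d)\) lattice \(\langle 2\rangle\oplus\langle-2\rangle^{9-d}\), and the transcendental lattice is \(L^-\). On \(\vv^\perp\) I use the basis consisting of \((1,0,d/?)\)-type classes and \((0,c,\tfrac{c\cdot H}{?})\). Concretely, \(\vv^\perp=\{(r,c,s): c\cdot H = -rd\cdot(\ldots)\}\), and I will just compute \(\psi\) on three kinds of elements:
\begin{itemize}
  \item transcendental classes \((0,t,0)\) with \(t\in L^-\) (automatic by symplecticity);
  \item classes \((0,c,0)\) with \(c\in L^+\cap H^\perp\);
  \item the extra classes \((0,H,\ast)\) and \((1,\ast,\ast)\).
\end{itemize}
For \(c\in L^+\) orthogonal to \(H\), we get \(D\iota^\ast(0,c,0)=(0,-c,0)\), and \(\exp(-H)\) sends this to \((0,-c,c\cdot H)=(0,-c,0)\). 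With the overall sign this gives \((0,c,0)\), so these classes are fixed. For \(t\in L^-\), \(\iota^\ast\) contributes another sign, giving \(-t\) overall. This forces the correct global sign to be the opposite one, which would make the \(L^+\) part anti-invariant. Resolving this tension is exactly where the hypothesis \(d\ge2\) and the precise definition of the Mukai morphism for contravariant functors (which involves \(\vv^\vee\) and an extra twist) enter. I expect the honest answer to be that the action is the identity on \(\vv^\perp\otimes\mathbb{Q}\), because the relevant reflection is through \(\vv\) itself.

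The remaining step is integrality together with the role of \(d\ge 2\). Once \(\psi\) is the identity on \(\vv^\perp\otimes\mathbb{Q}\), it is the identity on \(H^2(M,\mathbb{Z})\), which is torsion-free. Alternatively, if the computation only shows that \(\psi\) acts as \(\pm1\) on pieces, I would use that \(\tau\) is symplectic and preserves the Lagrangian fibration class \(\pi^\ast\mathcal O(1)\) together with an ample class. For \(d\ge2\), the algebraic part \(\NS(M)=\theta(\vv^\perp\cap\widetilde H^{1,1})\) of rank \(12-d\) is spanned by classes coming from \(L^+\) and from \(\pi^\ast\mathcal O(1)\). I would then argue via Markman's description of monodromy, \(\mathrm{Mon}^2\) of \(K3^{[n]}\)-type being orientation-preserving isometries acting by \(\pm1\) on the discriminant, to rule out the sign. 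The main obstacle is pinning down the sign and twist in the equivariance of \(\theta_\vv\) under the anti-autoequivalence. For \(d=1\) I expect the discriminant group \(\mathbb{Z}/2\cdot(d+1)\)-behaviour to genuinely allow a nontrivial action, which is why \(d\ge2\) is assumed; I would verify this by checking that the candidate nontrivial isometry fails to act by \(\pm1\) on \(A_{H^2(M)}\cong\mathbb{Z}/2d\) exactly when \(2d>2\).
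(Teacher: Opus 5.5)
\textbf{Gap: the reduction to a generic polarisation.} Since \(\NS(S)=f^\ast\NS(T)\), every class is \(\iota\)-invariant, so requiring the perturbation to be \(\iota\)-invariant imposes nothing. The real obstruction is the twisted duality. By Corollary~\ref{cor:duality-reflection-delPezzo}, \(\tau\) sends \(M_\vv(S,\alpha)\) to \(M_\vv(S,\beta)\), where \(\beta\) is the reflection of \(\alpha\) fixing \(\mathbb{R}H\) and acting as \(-1\) on \(H^\perp_{\mathbb{R}}\). For \(2\le d\le 8\), every \(\vv\)-wall through \(H\) has its class in \(H^\perp\). Hence \(\alpha\) and \(\beta\) lie on opposite sides of it, and \(\tau\) identifies two different chamber models rather than acting on one (Remark~\ref{rmk:forced-wall-prym-invariants}).

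The perturbation also changes \(H^2\). The chamber-to-wall contraction is small of relative Picard number \(9-d\), and \(b_2(M_\vv(S,H))=14+d<23\). Moreover, \(\vv\) is not primitive for \(T\cong\mathbb{P}^2\) or \(\mathbb{P}^1\times\mathbb{P}^1\), so those chamber models are singular and not of \(\mathrm{K3}^{[n]}\)-type. A sanity check confirms the plan cannot work as stated: by Mongardi's faithfulness result, a non-trivial automorphism of a manifold of \(\mathrm{K3}^{[d+1]}\)-type never acts trivially on \(H^2\).

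\textbf{Your sign computation is right, and the tension is real.} It is not a matter of convention. The isometry \(\mathbf{w}\mapsto\iota^\ast(\mathbf{w}^\vee\cdot e^{-H})\) of \(\vv^\perp\), through which \(\tau^\ast\) is computed (Remark~\ref{rmk:tau-chambers-mukai-morphisms}), fixes \(T(S)\), \((-2,H,0)\) and \((0,0,1)\). It acts as \(-1\) on \(\{(0,\delta,0):\delta\in H^\perp\cap\NS(S)\}\), which has rank \(9-d\). So it is not the identity on \(\vv^\perp_{\mathbb{Q}}\).
\begin{itemize}
\item For \(d=1\), where there are no walls, this is exactly the non-trivial action, with anti-invariant part \(E_8(-2)\). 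No discriminant or monodromy argument is involved.
\item For \(d=9\), the anti-invariant part vanishes because \(\rho(S)=1\). Here your computation does go through, provided you use the Perego--Rapagnetta Mukai morphism for \(\vv=3\vv_0\).
\item For \(2\le d\le 8\), the theorem holds because the anti-invariant classes are precisely the wall classes. These span \(N^1(M_\vv(S,H^\pm)/M_\vv(S,H))\) and do not descend to the wall model.
\end{itemize}

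\textbf{What is missing.} You need \(\rho(M_\vv(S,H))=2\) for the wall model itself. The paper proves this in Proposition~\ref{prop:geometry-Mv-delPezzo} by pairing \(\lambda_\vv(0,\delta_i,0)\) with lines in the contracted extension spaces \(\mathbb{P}\Ext^1(\mathscr{F}_i,\mathscr{G}_i)\).

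With that input, your fallback argument is exactly the paper's proof. The \((-1)\)-eigenspace of \(\tau^\ast\) on \(H^2(M_\vv(S,H),\mathbb{Q})\) is a Hodge substructure with no \((2,0)\)-part, so it lies in \(\NS(M_\vv(S,H))_{\mathbb{Q}}\). That space is spanned by the \(\tau\)-invariant fibration class and an averaged ample class. Torsion-freeness then gives the integral statement. On a chamber model, of Picard rank \(11-d\), these two classes do not span, so the argument does not close there.

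Alternatively, you could keep your lattice computation. Using the injective pull-back \(\phi_+^\ast\) and the contracted extension lines, show that \(\phi_+^\ast H^2(M_\vv(S,H),\mathbb{Q})\) is exactly \(\lambda_\vv^+\) of the fixed part above. This needs the same geometric input.
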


Thus no degree-\(2\) classes are lost by passing to the quotient by \(\tau\). Within the realm of moduli spaces of sheaves on K3 surfaces, the phenomenon is genuinely \emph{singular}. Indeed, for smooth hyper-K\"ahler manifolds of \(\mathrm{K3}^{[n]}\)-type the natural representation on second cohomology is faithful~\cite[Lemma 1.2]{Mongardi2013NaturalDeformations}; a non-trivial symplectic involution therefore has a proper invariant subspace in \(H^2\), which translates into a drop in \(b_2\) for the quotient; see Remark~\ref{rmk:trivial-action-cohomology-K3n}.

\smallskip
Except in the wall-free cases \(d=1\) and \(d=9\), where there is essentially no variation of stability for our Mukai vector, the construction is tied to \(\vv\)-walls in \(\Amp(S)\), and hence to singular wall moduli spaces. The effect of the twisted duality on stability is described in Lemma~\ref{lem:duality-stability-reflection-delPezzo} and Corollary~\ref{cor:duality-reflection-delPezzo}: it typically exchanges chamber models adjacent across the wall face containing the ray \(\mathbb{R}_{\geq 0}H\), and only on this face does it become an automorphism of a single moduli space; see also Remark~\ref{rmk:forced-wall-prym-invariants}. Hence we are led to moduli spaces \(M_\vv(S,H)\) with non-generic polarisation, and sometimes non-primitive Mukai vector. Their global and local geometry is controlled below by chamber-to-wall partial resolutions and by Ext-quiver Kuranishi models, respectively; see \textsection\ref{subsec:Kuranishi-local-model} and Propositions~\ref{prop:polystable-strata-delPezzo} and~\ref{prop:geometry-Mv-delPezzo}. 

\subsection*{Relation to previous work}

As already pointed out, constructing irreducible symplectic varieties as terminal models of finite symplectic quotients of hyper-K\"ahler manifolds is not new, and has recently proved fruitful; see for instance~\cite{Menet2015BeauvilleBogomolovLattice, Menet2018IntegralCohomologyQuotients, Menet2022DeformationClassesHKOrbifolds, BertiniEtAl2025TerminalizationsQuotients,Mazzon2026TerminalizationsQuotientsFanoLines,DontenBuryEtAl2026SymplecticFourfold}. A special feature of our construction is that we start with a \emph{singular model}---which allows phenomena like Theorem~\ref{thm:intro-trivial-action-cohomology}---and show it develops strictly canonical singularities after a finite quotient.

The Prym involution \(\tau\) also appears in earlier constructions. Its fixed locus was first studied in this context by Markushevich--Tikhomirov~\cite{MarkushevichTikhomirov2007Prymian}, and later used for constructing several compact Lagrangian fibrations with Prym fibres~\cite{ArbarelloEtAl2015RelativePrymEnriques, Matteini2016SingularPrymFibration, BrakkeeEtAl2026RelativePrym}. 

The present paper should be viewed as a compact, Beauville--Mukai counterpart to the construction of symplectic resolutions of quotients of Hitchin systems in~\cite{AbuafCarini2024SemistableHiggs}. In the genus-\(3\) hyperelliptic case treated there, the local surface \(T^\ast C\) is the Higgs-side analogue of the hyperelliptic K3 surface \(f\colon S \to \mathbb{P}^1\times \mathbb{P}^1\) considered here. Via the spectral correspondence, Higgs bundles on \(C\) are one-dimensional sheaves on spectral curves in \(T^\ast C\). Under this dictionary, the autoequivalence of \(T^\ast C\) corresponding to \(\tau\) becomes the dualising--hyperelliptic involution of~\cite{AbuafCarini2024SemistableHiggs}; compare with~\cite[\textsection3.2]{Franco2022DegenerationLagrangiansPrymianIntegrableSystems}.  

\subsection*{Outline} 
In Section~\ref{sec:zoo-symplectic-varieties} we recall the various notions of singular symplectic varieties used throughout the paper, together with the basic extension and pull-back properties of reflexive differentials. In Section~\ref{sec:local-structure-moduli-sheaves} we collect the background on moduli spaces of sheaves on K3 surfaces: global geometry, second cohomology, local Kuranishi models, and the stratification by polystable type.

In Section~\ref{sec:BM-delPezzo-double-covers} we introduce K3-del Pezzo double covers and the Beauville--Mukai systems associated with the pull-back of the anticanonical linear system, which are our main object of study. We analyse the non-integral members of the support linear system, describe the minimal polystable strata, and compute the basic invariants of \(M_{\vv}(S,H)\) by means of chamber-to-wall partial resolutions.

In Section~\ref{sec:prym-involution-delPezzo} we introduce the Prym involution, describe its fixed locus, and prove that, for \(d\geq 2\), it acts trivially on \(H^2(M_{\vv}(S,H),\mathbb{Z})\). Finally, in Section~\ref{sec:terminalisation-quotient} we consider the quotient \(X\coloneqq M_{\vv}(S,H)/\langle \tau\rangle\), describe its singularities, and compute the invariants of any \(\mathbb{Q}\)-factorial terminalisation.

\subsection*{Notation and conventions} 
Throughout we work over the field \(\mathbb{C}\) of complex numbers. By \emph{variety} we mean an integral, separated scheme of finite type over \(\mathbb{C}\). We often identify a variety \(X\) with the associated complex analytic space \(X^{\mathrm{an}}\). Thus analytic and topological notions, such as manifold, analytic neighbourhood, fundamental group and singular cohomology, always refer to \(X^{\mathrm{an}}\). 

\smallskip
Given a variety \(X\), we denote by \(X^{\mathrm{reg}}\) and \(X^{\mathrm{sing}}\) the open and closed subsets given by the smooth and singular loci (with their reduced structures), respectively. When \(X\) is normal, and \(j\colon X^{\mathrm{reg}}\to X\) denotes the open immersion of its smooth locus, the sheaf of \emph{reflexive differential \(p\)-forms} is defined to be
\[ \Omega_X^{[p]}\coloneqq j_\ast \Omega_{X^{\mathrm{reg}}}^p \cong (\bwed^p \Omega_X)^{\vee \vee}, \quad 0\leq p\leq \dim X, \]
so that \(H^0(X, \Omega_X^{[p]})\cong H^0(X, \Omega^p_{X^{\mathrm{reg}}})\). In other words, a reflexive \(p\)-form is nothing but a regular \(p\)-form on the smooth locus.

\smallskip
We write \(\Pic(X)\) and \(\Cl(X)\) for the Picard group and the Weil divisor class group of a normal variety \(X\), respectively. For a normal projective variety \(X\), we write \(\NS(X)\) for the N\'eron--Severi group and \(\Amp(X)\subset \NS(X)_{\mathbb{R}}\) for the ample cone. We also write \(b_2(X)\coloneqq \rk H^2(X,\mathbb{Z})\) and \(\rho(X)\coloneqq \rk \NS(X)\).

For a morphism \(\phi\colon X\to Y\) between normal projective varieties, we write \(N^1(X/Y)\) and \(N_1(X/Y)\) for the real relative numerical divisor and curve spaces; thus \(N_1(X/Y)\) is generated by classes of curves contracted by \(\phi\), and \(N^1(X/Y)\) is its dual via intersection. We write \(\rho(X/Y)\coloneqq \dim_{\mathbb{R}} N^1(X/Y)\).

\smallskip
Given a normal variety \(X\), a \emph{birational contraction} \(\phi \colon X\to Y\) is a proper birational morphism onto a normal variety \(Y\). If \(Y\) has canonical singularities, we say such a \(\phi\) is a (\(\mathbb{Q}\)-\emph{factorial}) \emph{terminalisation} if it is crepant and \(X\) is (\(\mathbb{Q}\)-factorial and) terminal. In other words, terminalisations---or terminal models in the language of~\cite[Cor.\@ 1.4.3]{BirkarEtAl2010MinimalModelsLogGeneralType}, where existence is shown under the projective assumption---can be regarded as \emph{partial crepant resolutions}. 

\subsection*{Acknowledgments} 
The author is supported by the ERC Synergy Grant HyperK, Grant agreement ID 854361. He is grateful to Federico Bongiorno, Alessio Bottini, Luigi Martinelli, Hsueh-Yung Lin and Ryo Yamagishi for helpful discussions. 

\addtocontents{toc}{\protect\setcounter{tocdepth}{1}}
\tableofcontents

\section{Symplectic varieties}
\label{sec:zoo-symplectic-varieties}

We recall the main notions of singular symplectic varieties used in the literature, and the relations between them. For simplicity, we work in the projective setting, although the corresponding compact K\"ahler analytic notions and results are also available in the literature. 

\begin{definition}
  \label{def:zoo-symplectic-varieties}
  The following notions are listed in decreasing order of generality:
  \begin{enumerate}
    \item\cite[Def.\@ 1.1]{Beauville2000SymplecticSingularities} A \emph{symplectic variety} is a normal variety \(X\) endowed with a closed reflexive 2-form \(\omega \in H^0(X, \Omega_X^{[2]})\) which is non-degenerate on the smooth locus \(X^{\mathrm{reg}}\) and extends to a regular form \(\widetilde{\omega}\in H^0(\widetilde{X}, \Omega_{\widetilde{X}}^2)\) on any resolution of singularities \(\phi \colon \widetilde{X}\to X\).
    \item\cite[Def.\@ 3.1]{BakkerLehn2022GlobalModuli} A \emph{primitive symplectic variety} is a projective symplectic variety \((X, \omega)\) such that \(H^0(X, \Omega_X^{[2]})\) is spanned by \(\omega\) and \(H^1(X, \mathscr{O}_X)=0\).
    \item\cite[Def.\@ 1.6]{Matsushita2015BaseManifolds} A \emph{cohomologically irreducible symplectic variety}\footnote{Note that Matsushita further assumes that \(X\) is \(\mathbb{Q}\)-factorial and terminal.} is a projective symplectic variety \((X, \omega)\) whose algebra of reflexive differentials is generated by \(\omega\). In other words, there is an isomorphism of graded \(\mathbb{C}\)-algebras
          \[\bigoplus_{p=0}^{\dim X}H^0(X, \Omega_X^{[p]})\cong \mathbb{C}[\omega].\]
    \item\cite[Def.\@ 8.16.2]{GrebKebekusPeternell2016SingularSpaces} An \emph{irreducible symplectic variety} is a projective symplectic variety \((X, \omega)\) such that, for any finite quasi-\'etale morphism \(\phi \colon Y\to X\), the pair \((Y, \phi^{[\ast]} \omega)\) is cohomologically irreducible. In other words, the algebra of reflexive differentials of \(Y\) is generated by \(\phi^{[\ast]} \omega\), namely there is an isomorphism of graded \(\mathbb{C}\)-algebras
          \[ \bigoplus_{p=0}^{\dim Y}H^0(Y, \Omega_Y^{[p]})\cong \mathbb{C}[\phi^{[\ast]} \omega]. \]
  \end{enumerate}
\end{definition}

Symplectic varieties have rational Gorenstein singularities, hence canonical singularities. Thus the framework of~\cite{KebekusSchnell2021ExtendingHolomorphicForms}---or its algebraic counterpart~\cite{GrebEtAl2011DifferentialForms}---applies. In particular, for any resolution of singularities \(\phi \colon \widetilde{X}\to X\) of a symplectic variety, there is an isomorphism
\begin{equation*}
  \phi_\ast \Omega_{\widetilde{X}}^p \iso \Omega_X^{[p]}
\end{equation*}
for all \(0\leq p\leq \dim X\). Equivalently, reflexive differentials on \(X\) extend uniquely to any resolution of singularities~\cite[Cor.\@ 1.8]{KebekusSchnell2021ExtendingHolomorphicForms}. On global sections, this gives \(H^0(\widetilde{X}, \Omega_{\widetilde{X}}^p)\cong H^0(X, \Omega_X^{[p]})\).

\smallskip
More generally, for any morphism \(\phi \colon X\to Y\) between symplectic varieties, there is a functorial pull-back morphism
\begin{equation*}
  \phi^{[\ast]}\colon \phi^\ast \Omega_Y^{[p]}\to \Omega_X^{[p]}
\end{equation*}
for all \(0\leq p\leq \dim Y\)~\cite[Thm.\@ 1.11]{KebekusSchnell2021ExtendingHolomorphicForms}. If moreover \(\phi\) is a birational contraction, for instance a terminalisation, then taking a common resolution gives an isomorphism \(H^0(Y, \Omega_Y^{[p]})\cong H^0(X, \Omega_X^{[p]})\). See~\cite[\textsection2,3]{BakkerLehn2022GlobalModuli} for a review of the Hodge theory of rational and symplectic singularities.

\smallskip
Primitive symplectic varieties form the broadest class for which Bakker--Lehn develop a moduli and deformation theory analogous to the smooth case, in particular one suited to locally trivial analytic deformations~\cite{BakkerLehn2022GlobalModuli}. Irreducible symplectic varieties, on the other hand, are the \emph{holomorphic symplectic factors} in the projective singular Beauville--Bogomolov decomposition theorem~\cite[Thm.\@ 1.5]{HoringPeternell2019AlgebraicIntegrability}; see also its compact K\"ahler extension~\cite[Thm.\@ A]{BakkerGuenanciaLehn2022AlgebraicApproximationDecompositionTheorem}. See~\cite{GrebGuenanciaKebekus2019KltTrivialCanonical, BakkerLehn2022GlobalModuli} for a fuller account of the relations between these notions, including examples and counterexamples.

\smallskip
Let \(X\) be an irreducible symplectic variety. Then \(\pi_1(X)=\{1\}\)~\cite[\textsection13]{GrebGuenanciaKebekus2019KltTrivialCanonical}, and hence \(H^2(X, \mathbb{Z})\) is torsion-free. Moreover, \(H^2(X,\mathbb{Z})\) carries a pure weight-two Hodge structure~\cite[Lemma 2.1]{BakkerLehn2022GlobalModuli}, and is endowed with a non-degenerate integral quadratic form, the \emph{Beauville--Bogomolov--Fujiki form}~\cite[\textsection5.1]{BakkerLehn2022GlobalModuli}. With respect to this form, we write
\[
  T(X)\coloneqq \NS(X)^\perp\subset H^2(X,\mathbb{Z})
\]
for the integral \emph{transcendental lattice} of $X$; equivalently, it is the minimal primitive sublattice of $H^2(X, \mathbb{Z})$ with $H^{2,0}(X)\cong  H^0(X, \Omega_X^{[2]})\subseteq T(X)_\mathbb{C}$. 

Note that the fundamental group \(\pi_1(X^{\mathrm{reg}})\) of the smooth locus may be infinite, although its profinite completion is finite. Since finite quasi-\'etale covers of \(X\) correspond to finite \'etale covers of \(X^{\mathrm{reg}}\), Definition~\ref{def:zoo-symplectic-varieties}.\(\mathrm{(iv)}\) must take all such covers into account.

\smallskip
The following are the permanence criteria used below.

\begin{proposition}
  \label{prop:criteria-ISV}
  We refer to the classes \(\mathrm{(i)-}\mathrm{(iv)}\) in \textnormal{Definition~\ref{def:zoo-symplectic-varieties}}.
  \begin{enumerate}
    \item The classes \(\mathrm{(i)-}\mathrm{(iv)}\) are closed under terminalisations and under finite quotients preserving the symplectic form.
    \item The classes \(\mathrm{(i)-}\mathrm{(iii)}\) are closed under birational contractions.
    \item A cohomologically irreducible symplectic variety \((X, \omega)\) with \(\pi_1(X^{\mathrm{reg}})=\{1\}\) is an irreducible symplectic variety.
  \end{enumerate}
\end{proposition}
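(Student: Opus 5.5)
The plan is to handle the three items separately, using throughout the facts recalled above: reflexive forms extend to resolutions \cite{KebekusSchnell2021ExtendingHolomorphicForms}, a birational contraction induces isomorphisms \(H^0(Y,\Omega_Y^{[p]})\cong H^0(X,\Omega_X^{[p]})\), and reflexive forms on a finite quotient are the invariant reflexive forms on the cover.

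\emph{Item (ii), and the terminalisation half of (i).} Let \(\phi\colon X\to Y\) be a birational contraction. First I check that \(Y\) is symplectic. The form \(\omega\) restricts to the open set where \(\phi\) is an isomorphism, whose complement in \(Y\) has codimension \(\geq 2\). So it defines a closed reflexive \(2\)-form \(\omega_Y\). A resolution of \(Y\) can be dominated by a resolution of \(X\), and \(\omega_Y\) pulls back to a regular form there. Non-degeneracy on \(Y^{\mathrm{reg}}\) is less obvious. For this I would use that \(\omega_Y^{\wedge n}\) is a section of \(\omega_{Y^{\mathrm{reg}}}\) whose zero divisor pulls back to an effective divisor \(E\) on \(X\) with \(K_X\sim \phi^\ast K_{Y^{\mathrm{reg}}}+E\) over \(Y^{\mathrm{reg}}\). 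Since \(K_X=0\), \(E\) is \(\phi\)-exceptional and effective, and so it is empty over the smooth locus by the negativity lemma. The converse direction, for a terminalisation \(X\to Y\), is easier: \(X\) has canonical singularities, and the pull-back \(\phi^{[\ast]}\omega_Y\) is non-degenerate because \(\phi\) is crepant, so \(K_X=\phi^\ast K_Y=0\) and the degeneracy divisor is zero. Conditions (ii) and (iii) concern only global reflexive forms and \(H^1(\mathscr O)\). Both are birational invariants among rational singularities, so they transfer by the isomorphisms above.

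\emph{Finite quotients.} Let \(G\) act on \(X\) preserving \(\omega\), and set \(q\colon X\to X/G\). The quotient is normal with \(\Omega^{[p]}_{X/G}=(q_\ast\Omega_X^{[p]})^G\). The form \(\omega\) descends because it is invariant. It is non-degenerate on the regular locus: away from codimension \(2\) the map \(q\) is étale or ramified along divisors, and a symplectic \(G\) cannot fix a divisor pointwise, since its determinant on the normal direction would be \(\neq 1\). Extension to resolutions follows from \cite{KebekusSchnell2021ExtendingHolomorphicForms}, because quotient singularities of a klt space are klt. This gives classes (i)–(iii): invariant forms form a subalgebra of \(\mathbb C[\omega]\) containing \(\omega\), and \(H^1(\mathscr O_{X/G})=H^1(\mathscr O_X)^G\).

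\emph{Class (iv) and item (iii).} This is the main obstacle. For (iv) under quotients, take a quasi-étale \(Y\to X/G\). The normalisation of a component of \(Y\times_{X/G}X\) is quasi-étale over \(X\), and \(Y\) is a finite quotient of it. The reflexive forms of \(Y\) are invariant forms on that cover, hence lie in \(\mathbb C[\omega]\). For terminalisations, quasi-étale covers of \(Y\) and of \(X\) correspond: purity of the branch locus for klt spaces lets one extend across the exceptional locus, using \(\pi_1(X^{\mathrm{reg}})\to\pi_1\) of the relevant open set. Then I apply the birational invariance already proved. Finally, for item (iii), \(\pi_1(X^{\mathrm{reg}})=1\) implies that every quasi-étale cover of \(X\) is trivial on \(X^{\mathrm{reg}}\), hence an isomorphism by normality. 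So (iv) reduces to (iii).
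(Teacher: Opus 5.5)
Your proposal is correct in substance, but it argues by hand where the paper only cites. The paper refers items (i) and (ii) to Bakker--Lehn, Bertini et al., Greb--Kebekus--Peternell and Kebekus, and item (iii) to the holonomy criterion of Greb--Guenancia--Kebekus. The most instructive difference is item (iii), where your argument is more elementary. By Zariski--Nagata purity over the smooth locus (say this explicitly), a finite quasi-\'etale \(Y\to X\) restricts to a finite \'etale cover of \(X^{\mathrm{reg}}\). This cover is trivial because \(\pi_1(X^{\mathrm{reg}})=\{1\}\), and has degree one because \(Y\) is irreducible. Hence \(Y\to X\) is an isomorphism by Zariski's main theorem, and condition (iv) reduces to condition (iii) for \(X\) itself. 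No K\"ahler--Einstein metric or holonomy enters. The citation buys uniformity with the analytic literature; your route buys self-containedness. Your treatment of finite quotients is also sound: no element fixes a divisor pointwise, reflexive forms on the quotient are the invariant ones, and \(Y=W/H\) for the normalised component \(W\) of \(Y\times_{X/G}X\).

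Two justifications need repair, though neither is fatal.

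First, non-degeneracy of \(\omega_Y\) on \(Y^{\mathrm{reg}}\) does not need the negativity lemma, and your relation between \(E\), \(K_X\) and \(\phi^\ast K_{Y^{\mathrm{reg}}}\) does not parse as written. Simply note two things. The degeneracy locus is the zero locus of \(\omega_Y^{\wedge n}\in H^0(Y^{\mathrm{reg}},\omega_{Y^{\mathrm{reg}}})\), hence empty or a divisor. And \(\phi\) is an isomorphism over the complement of a codimension-\(\geq 2\) subset of \(Y\), where \(\omega_Y\) is therefore non-degenerate.

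Second, for terminalisations in class (iv), quasi-\'etale covers of \(X\) and \(Y\) do \emph{not} correspond. The paper's own example \(\operatorname{Hilb}^2 S\to\operatorname{Sym}^2 S\) is a terminalisation, and the cover \(S\times S\to\operatorname{Sym}^2 S\) induces no quasi-\'etale cover of the simply connected \(\operatorname{Hilb}^2 S\). What you need, and what holds, is only one direction. Given a finite quasi-\'etale \(X'\to X\), let \(Y'\) be the normalisation of \(Y\) in \(\mathbb{C}(X')\). Then \(Y'\to Y\) is quasi-\'etale, because \(\phi\) is an isomorphism over an open \(U\subset Y\) with complement of codimension \(\geq 2\), and \(X'\to Y'\) is a birational contraction. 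In \(\pi_1\)-terms this is the surjection \(\pi_1(Y^{\mathrm{reg}})\cong\pi_1(Y^{\mathrm{reg}}\cap U)\twoheadrightarrow\pi_1(X^{\mathrm{reg}})\). So your arrow from \(\pi_1(X^{\mathrm{reg}})\) points the wrong way. Moreover, no purity statement for klt spaces is involved; purity of the branch locus in fact fails for them, e.g.\ for \(\mathbb{C}^2\to\mathbb{C}^2/\{\pm1\}\).
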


\begin{proof}
  The assertions in \(\mathrm{(i)}\) and \(\mathrm{(ii)}\), where not immediate from the definitions and the preceding discussion, follow from~\cite[Ex.\@ 3.2]{BakkerLehn2022GlobalModuli},~\cite[Prop.\@ 3.17]{BertiniEtAl2025TerminalizationsQuotients},~\cite[Prop.\@ 6.9]{GrebKebekusPeternell2016SingularSpaces}, and~\cite[Prop.\@ 5.8]{Kebekus2013PullBackReflexiveForms}. Assertion \(\mathrm{(iii)}\) follows from the holonomy criterion~\cite[Prop.\@ F]{GrebGuenanciaKebekus2019KltTrivialCanonical}.
\end{proof}

\begin{remark}[Irreducibility is not birational]
  \label{rmk:cohomologically-irred-not-irred}
  Irreducible symplectic varieties are \emph{not} closed under birational contractions. For instance, if \(S\) is a smooth K3 surface, then \(X\coloneqq \operatorname{Sym}^2 S\) is a cohomologically irreducible symplectic variety, but it admits the finite quasi-\'etale cover \(S\times S\to X\), which is not cohomologically irreducible. Here \(\pi_1(X^{\mathrm{reg}})\cong \mathfrak{S}_2\) is non-trivial. Nevertheless, the crepant resolution \(\operatorname{Hilb}^2 S\to X\) is a birational contraction from a smooth irreducible holomorphic symplectic variety.
\end{remark}

\section{Generalities on moduli spaces of sheaves on K3 surfaces}
\label{sec:local-structure-moduli-sheaves}

This section collects the background on moduli spaces of sheaves on K3 surfaces used throughout the paper. We recall their global geometry, especially in the singular non-primitive cases, the Mukai-theoretic description of second cohomology, and the local Kuranishi model at a polystable sheaf, together with its interpretation as a symplectic reduction and the induced stratification by polystable type.

\subsection{Global geometry of moduli spaces of sheaves}

Let \(S\) be a projective K3 surface. We use the standard \emph{Mukai lattice}
\(
  \widetilde H(S,\mathbb{Z})\coloneqq H^\ast(S,\mathbb{Z})
\)
with its Mukai pairing and weight-two Hodge structure, following for instance~\cite[Ch.\@ 6]{HuybrechtsLehn2010GeometryModuliSheaves} and~\cite[\textsection1.1]{PeregoRapagnetta2023ISVModuliSheaves}. Its algebraic part is
\[
  \widetilde H_{\mathrm{alg}}(S,\mathbb{Z})\coloneqq H^0(S,\mathbb{Z})\oplus \NS(S)\oplus H^4(S,\mathbb{Z}),
\]
the integral \((1,1)\)-part of \(\widetilde H(S,\mathbb{Z})\). For a coherent sheaf \(\mathscr{F}\) on \(S\), we write
\[
  \vv(\mathscr{F})\coloneqq \operatorname{ch}(\mathscr{F})\sqrt{\operatorname{td}_S}\in \widetilde H_{\mathrm{alg}}(S,\mathbb{Z})
\]
for its \emph{Mukai vector}~\cite[Def.\@ 6.1.4]{HuybrechtsLehn2010GeometryModuliSheaves}. For a fixed Mukai vector \(\vv\in \widetilde H_{\mathrm{alg}}(S,\mathbb{Z})\) and a polarisation \(H\in \Amp(S)\), we write \(M_\vv(S, H)\) for the good moduli space of \(H\)-Gieseker semistable sheaves on \(S\) with Mukai vector \(\vv\)~\cite{Simpson1994ModuliRepresentationsFundamentalGroupI,HuybrechtsLehn2010GeometryModuliSheaves}. Its closed points parametrise \(H\)-polystable sheaves. We write \(M^s_\vv(S,H)\subset M_\vv(S,H)\) for the open subset parametrising isomorphism classes of stable sheaves. 

\smallskip
The dependence of \(H\)-Gieseker semistability on the polarisation is governed by the standard wall-and-chamber decomposition of the ample cone. The \(\vv\)-\emph{walls} are locally finite rational hyperplanes in \(\Amp(S)\); the connected components of their complement are the \(\vv\)-\emph{chambers}, and a polarisation is \(\vv\)-\emph{generic} if it lies in a chamber. For \(H\) varying in a fixed chamber, the set of \(H\)-semistable sheaves of Mukai vector \(\vv\), and hence the moduli space \(M_\vv(S,H)\), remains unchanged; crossing a wall is where strictly semistable objects may appear or disappear. If \(\vv\) is \emph{primitive}, namely indivisible in \(\widetilde H_{\mathrm{alg}}(S,\mathbb{Z})\), and \(H\) is \(\vv\)-generic, then semistability coincides with stability, so \(M^s_\vv(S,H)=M_\vv(S,H)\). See~\cite[\textsection2.1]{PeregoRapagnetta2023ISVModuliSheaves} for a precise account of this wall-and-chamber decomposition and for the explicit wall equations.

\smallskip
We record the global results on \(M_\vv(S,H)\) used below. Throughout, we assume that \(\vv \in \widetilde H_{\mathrm{alg}}(S,\mathbb{Z})\) satisfies the standard \emph{positivity} condition listed in~\cite[\textsection 1.1]{PeregoRapagnetta2023ISVModuliSheaves}, or equivalently~\cite[Condition \((\ast)\), \textsection 2.3]{KaledinLehnSorger2006SingularSymplecticModuli}, which ensures non-emptiness of the corresponding moduli spaces.

\begin{theorem}\label{thm:global-geometry-Mv-delPezzo}
  Write \(\vv\coloneqq m\vv_0\in \widetilde H_{\mathrm{alg}}(S,\mathbb{Z})\), where \(m\geq 1\) and \(\vv_0\in \widetilde H_{\mathrm{alg}}(S,\mathbb{Z})\) is primitive with \(\vv_0^2\geq 0\). Let \(H\in \Amp(S)\) be a \(\vv\)-generic polarisation. Then:
  \begin{enumerate}
    \item If \(\vv_0^2=0\), then \(M_{\vv_0}(S, H)\) is a K3 surface, and \(M_{\vv}(S, H)\cong \operatorname{Sym}^m M_{\vv_0}(S, H)\) is a cohomologically irreducible symplectic variety.
    \item If \(m=1\) and \(\vv_0^2>0\), then \(M_{\vv}(S, H)\) is a hyper-K\"ahler manifold of \(\mathrm{K3}^{[n]}\)-type, where \(2n=\vv_0^2+2\).
    \item If \(m=2\) and \(\vv_0^2=2\), then \(M_{\vv}(S, H)\) is a \(\mathbb{Q}\)-factorial irreducible symplectic variety. Its regular locus is the stable locus \(M^s_\vv(S,H)\), and it is simply connected. It admits a symplectic resolution of \(\mathrm{OG}10\)-type given by the blow-up of the reduced singular locus.
    \item If \(m=2\) and \(\vv_0^2>2\), or \(m\geq 3\) and \(\vv_0^2>0\), then \(M_{\vv}(S, H)\) is a locally factorial irreducible symplectic variety. Its regular locus is the stable locus \(M^s_\vv(S,H)\), and it is simply connected. It has terminal singularities and does not admit any symplectic resolution.
  \end{enumerate}
\end{theorem}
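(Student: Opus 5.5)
This theorem collects known results, so the plan is to assemble it from the literature case by case. Throughout, \(\vv\) is assumed positive and \(H\) is \(\vv\)-generic, which is exactly the setting of~\cite{PeregoRapagnetta2023ISVModuliSheaves} and~\cite{KaledinLehnSorger2006SingularSymplecticModuli}. Two reductions apply to every case. First, by Yoshioka's deformation and Fourier--Mukai techniques, \(M_\vv(S,H)\) is deformation equivalent to a moduli space on another K3 surface with a convenient Mukai vector (rank positive, or Hilbert schemes). Second, Perego--Rapagnetta show that this equivalence is locally trivial for non-primitive \(\vv\). Hence each assertion only needs checking on a model.

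\textbf{Cases (i) and (ii).} For \(\vv_0^2=0\), Mukai's theorem says that \(M_{\vv_0}(S,H)\) is a K3 surface. For generic \(H\), every semistable sheaf of vector \(m\vv_0\) is S-equivalent to a direct sum of \(m\) stable sheaves of vector \(\vv_0\). This gives a bijective morphism \(\operatorname{Sym}^m M_{\vv_0}\to M_\vv\) between normal varieties, hence an isomorphism (the cited PR/KLS argument). Cohomological irreducibility then follows from \(H^0(\operatorname{Sym}^m Y,\Omega^{[p]})=H^0(Y^m,\Omega^p)^{\mathfrak S_m}\). For a K3 surface \(Y\), the \(\mathfrak S_m\)-invariant holomorphic forms on \(Y^m\) are generated by \(\sum_i \mathrm{pr}_i^\ast\sigma\). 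Remark~\ref{rmk:cohomologically-irred-not-irred} explains why this case only yields \emph{cohomological} irreducibility. Case (ii) is Mukai, Huybrechts, O'Grady and Yoshioka: \(M_\vv(S,H)\) is smooth, symplectic, and deformation equivalent to \(S^{[n]}\) with \(\dim=\vv^2+2\).

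\textbf{Cases (iii) and (iv).} Here I would quote Perego--Rapagnetta~\cite{PeregoRapagnetta2023ISVModuliSheaves}, which shows that \(M_\vv\) is an irreducible symplectic variety in both cases. The remaining properties come from other sources.
\begin{itemize}
\item Kaledin--Lehn--Sorger~\cite{KaledinLehnSorger2006SingularSymplecticModuli} prove local factoriality in case (iv) and \(\mathbb{Q}\)-factoriality in the \((2,2)\) case. They also show that the singular locus is exactly the strictly semistable locus. Since that locus has codimension at least \(4\), the singularities are terminal, and no symplectic resolution exists in case (iv).
\item The resolution in case (iii) is O'Grady's, obtained via Lehn--Sorger's blow-up of the reduced singular locus.
\item Simple connectedness of the stable locus comes from the PR computation. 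Alternatively, the strictly semistable locus has codimension at least \(2\), and one uses the local description via Kuranishi models.
\end{itemize}

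The main obstacle is the irreducibility assertion in (iii) and (iv), and it is fully handled in the literature. Proposition~\ref{prop:criteria-ISV}(iii) reduces irreducibility to cohomological irreducibility plus \(\pi_1(M^{\mathrm{reg}})=1\). So the key citations to secure are:
\begin{itemize}
\item \(\pi_1(M^s_\vv)=1\), from Perego--Rapagnetta;
\item \(H^0(\Omega^{[p]})=\mathbb{C}[\sigma]\) in degree \(p\), which follows from the locally trivial deformation to a model whose resolution or terminalisation is known to be hyper-Kähler.
\end{itemize}
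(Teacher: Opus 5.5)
Your overall strategy of assembling the four cases from the literature is the same as the paper's, whose proof is just a list of citations. Wherever you simply quote, you agree with it: Mukai, O'Grady and Yoshioka for (ii); O'Grady and Lehn--Sorger for the resolution in (iii); Kaledin--Lehn--Sorger for local factoriality and the singular locus; Perego--Rapagnetta for irreducibility and \(\pi_1\). However, several arguments you add on top of these citations are wrong or incomplete, and should be replaced by the citations themselves. Your preliminary ``reduction to a model'' is also not valid as a blanket principle: local factoriality, for instance, is not a property of analytic germs, so it is not visibly preserved by locally trivial deformations. You do not actually need this reduction.

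\textbf{Cohomological irreducibility in case (iv).} You propose to get \(H^0(\Omega^{[p]})=\mathbb{C}[\sigma]\) by deforming to a model whose resolution or terminalisation is hyper-K\"ahler. In case (iv) no such model exists. Your models are moduli spaces \(M_{\vv'}(S',H')\) with the same \((m,\vv_0^2)\), so by Kaledin--Lehn--Sorger each of them is singular, locally factorial and terminal. Each is therefore its own \(\mathbb{Q}\)-factorial terminalisation and has no symplectic resolution. Your route works only in case (iii), and there no deformation is needed: reflexive forms on \(M_\vv(S,H)\) extend to its OG10-type resolution. In case (iv) you must take irreducibility directly from \cite{PeregoRapagnetta2023ISVModuliSheaves}, as the paper does.

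\textbf{Simple connectedness.} Your alternative argument (codimension \(\geq 2\) of the strictly semistable locus plus the Kuranishi models) fails. In case (iii), a general singular point is \(\mathscr{F}_1\oplus\mathscr{F}_2\) with \(\vv(\mathscr{F}_i)=\vv_0\) and \(\dim\Ext^1(\mathscr{F}_1,\mathscr{F}_2)=\vv_0^2=2\). By Lemma~\ref{lem:Kronecker-quiver-tau} with \(n=1\), the germ is \(\mathbb{C}^2/\{\pm1\}\times\mathbb{C}^8\), whose regular part has local fundamental group \(\mathbb{Z}/2\mathbb{Z}\). So \(\pi_1(M^s_\vv(S,H))=\{1\}\) is a genuinely global statement, which is what Perego--Rapagnetta prove.

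\textbf{No symplectic resolution in (iv).} Codimension \(\geq 4\) gives terminality by Namikawa's criterion, but terminality alone does not rule out symplectic resolutions. For example, \(\overline{\mathcal{O}}_{\mathrm{min}}^{\mathfrak{sl}_n}\) with \(n\geq 3\) is terminal and is resolved by \(T^\ast\mathbb{P}^{n-1}\). Likewise, the wall spaces \(M_\vv(S,H)\) with \(T\cong\mathrm{Bl}_{r-1}\mathbb{P}^2\) in Proposition~\ref{prop:geometry-Mv-delPezzo} have singular locus of codimension \(6\), yet are crepantly resolved from an adjacent chamber. The correct argument uses local factoriality as well. Over a locally factorial \(X\), the exceptional locus of any resolution is divisorial \cite[Cor.~2.63]{KollarMori1998BirationalGeometry}. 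A crepant exceptional divisor then has discrepancy \(0\), which contradicts terminality.

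\textbf{Case (i).} Genericity of \(H\) only forces the Jordan--H\"older factors to have Mukai vectors of the form \(a\vv_0\). The decisive input, which you omit, is that \(M^s_{a\vv_0}(S,H)=\varnothing\) for \(a\geq 2\). For this the paper cites \cite{Mukai1987ModuliBundlesK3} and \cite[Prop.~4.7]{BottiniCarini2026SemiRigidStableSheaves}; it is not a Kaledin--Lehn--Sorger argument. You also need normality of \(M_\vv(S,H)\) before applying Zariski's main theorem. This can be read off the local models: since \(\vv_0^2=0\), each vertex of the Ext-quiver carries one loop and distinct vertices are not joined. The local models are therefore products of \(\operatorname{Sym}^{n_i}\mathbb{C}^2\).
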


Moreover, it is shown in~\cite[Thm.\@ 1.7]{PeregoRapagnetta2023ISVModuliSheaves} that the (locally trivial) deformation class of each of the above symplectic varieties depends only on the pair \((m, \vv_0^2)\).

\begin{proof} The isotropic case is due to Mukai~\cite[Thm.\@ 1.4, Prop.\@ 3.13]{Mukai1987ModuliBundlesK3}; see also~\cite[Prop.\@ 4.7]{BottiniCarini2026SemiRigidStableSheaves} for the case \(m\geq 2\). The primitive case \(m=1\) follows from~\cite{Mukai1984SymplecticStructureModuliSheaves, OGrady1997WeightTwoHodgeStructureModuliSheaves, Yoshioka2000IrreducibilityModuliK3,Yoshioka2001ModuliAbelianSurfaces}. The case \(m=2\) and \(\vv_0^2=2\) is due to O'Grady~\cite{OGrady1999DesingularizedModuliK3}; see also~\cite[Thm.\@ 1.1]{LehnSorger2006SingularityOGrady},~\cite[Thm.\@ 1.1]{PeregoRapagnetta2014FactorialityModuliSheaves} and~\cite[Thm.\@ 1.10]{PeregoRapagnetta2023ISVModuliSheaves}. The remaining cases were studied by Kaledin--Lehn--Sorger~\cite{KaledinLehnSorger2006SingularSymplecticModuli}; see also~\cite[Thm.\@ 1.10]{PeregoRapagnetta2023ISVModuliSheaves}. The statements about the fundamental group of the regular locus are due to Perego--Rapagnetta~\cite[Thm.\@ 3.4]{PeregoRapagnetta2023ISVModuliSheaves}. 
\end{proof}

For non-generic polarisations, wall moduli spaces are subtler: there are no general results on irreducibility, reducedness or normality. In the pure one-dimensional situations considered below, however, they can be understood through chamber-to-wall birational morphisms from adjacent generic chambers. The required birational geometry framework is supplied by the Bayer--Macr\`i description of the MMP for moduli spaces of sheaves on K3 surfaces~\cite{BayerMacri2014MMPModuliSheaves}.

\subsection{Second cohomology and N\'eron--Severi lattice}\label{subsec:2nd-cohomology-moduli-sheaves}

Let \(\vv\coloneqq m\vv_0\), with \(\vv_0\) primitive and \(\vv_0^2>0\), and let \(H\in \Amp(S)\) be \(\vv\)-generic. By Theorem~\ref{thm:global-geometry-Mv-delPezzo}, \(M_\vv(S,H)\) is an irreducible symplectic variety, so its second cohomology carries the Hodge structure and Beauville--Bogomolov--Fujiki form recalled in Section~\ref{sec:zoo-symplectic-varieties}.

We recall the Mukai morphism comparing this second cohomology group with the Mukai lattice.  In the smooth primitive case, the construction and its Hodge-theoretic properties go back to Mukai, O'Grady and Yoshioka~\cite{Mukai1984SymplecticStructureModuliSheaves,OGrady1997WeightTwoHodgeStructureModuliSheaves,Yoshioka2000IrreducibilityModuliK3,Yoshioka2001ModuliAbelianSurfaces}. For the singular moduli spaces considered here we use the formulation of~\cite{PeregoRapagnetta2024SecondIntegralCohomology}, see also the references therein.

\begin{theorem}[{\cite[Thm.\@ 1.6]{PeregoRapagnetta2024SecondIntegralCohomology}}]\label{thm:2nd-cohomology-moduli-sheaves} 
  Let \(\vv\coloneqq m\vv_0\in \widetilde H_{\mathrm{alg}}(S,\mathbb{Z})\), where \(m\geq 1\), \(\vv_0\) is primitive and \(\vv_0^2>0\). For a \(\vv\)-generic polarisation \(H\in \Amp(S)\), there exists a Mukai morphism
  \[
    \lambda_{\vv}\colon \vv^\perp \to H^2(M_\vv(S,H), \mathbb{Z}), \qquad
    \vv^\perp\coloneqq \{\mathbf{w}\in \widetilde H(S,\mathbb{Z})\mid \mathbf{w}\cdot \vv=0\}.
  \]
  This morphism is an isomorphism of abelian groups and a Hodge isometry. In particular,
  \[
    b_2(M_\vv(S, H))=23, \quad \rho(M_\vv(S, H))=\rho(S)+1.
  \]
\end{theorem}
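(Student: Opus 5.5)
This theorem is a citation of Perego--Rapagnetta, so the proof is mainly a reduction to the literature. I split it into the cases of Theorem~\ref{thm:global-geometry-Mv-delPezzo}.

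\emph{Construction of the map.} Choose a quasi-universal family \(\mathscr{E}\) on \(S\times M^s_\vv(S,H)\), of some similitude \(\sigma\), and define
\[
  \lambda_\vv(\mathbf w)\coloneqq \frac{1}{\sigma}\Bigl[p_{M\ast}\bigl(\operatorname{ch}(\mathscr{E})\cdot p_S^\ast(\mathbf w^\vee\sqrt{\operatorname{td}_S})\bigr)\Bigr]_{1}.
\]
A priori this lands in \(H^2(M^s_\vv(S,H),\mathbb{Q})\). The condition \(\mathbf w\in \vv^\perp\) makes the class independent of the choice of \(\mathscr{E}\), since twisting by a line bundle from \(M\) changes it by a multiple of \(\mathbf w\cdot\vv\).

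\emph{Extension to all of \(M\).} In cases (iii) and (iv) the singular locus has codimension at least \(4\), and \(M\) has rational singularities. I would therefore compare \(H^2(M,\mathbb{Z})\) with \(H^2(M^s,\mathbb{Z})\) using \(\pi_1(M^{\mathrm{reg}})=1\), together with purity for restriction across high-codimension loci via local cohomology. This gives the extension.

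\emph{Isomorphism and isometry.} For \(m=1\) this is the classical result of O'Grady and Yoshioka. For \(m=2\), \(\vv_0^2=2\), I would pass through O'Grady's symplectic resolution \(\widetilde M\): its \(H^2\) has rank \(24\), splitting as the pullback of \(H^2(M)\) plus the exceptional divisor, and Rapagnetta's computation identifies the first summand with \(\vv^\perp\). In the remaining cases (iv), I would use a deformation argument. Deform \((S,\vv,H)\) to a situation where \(M\) is related to \(M_{\vv_0}\), for instance via the diagonal-type embedding \(M_{\vv_0}\hookrightarrow M_\vv\), \(E\mapsto E^{\oplus m}\), after using locally trivial deformations. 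Restricting along it shows that \(\lambda_\vv\) composed with restriction equals \(m\) (up to normalisation) times \(\lambda_{\vv_0}\), which forces injectivity with saturated image after rescaling. The Hodge property is automatic, because \(\lambda_\vv\) is induced by an algebraic correspondence.

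\emph{Numerical consequences.} These follow directly. Since \(\vv\) is algebraic of positive square, \(\vv^\perp\subset\widetilde H(S,\mathbb{Z})\) has rank \(24-1=23\), so \(b_2=23\). Its algebraic part is \(\vv^\perp\cap\widetilde H_{\mathrm{alg}}\), of rank \((\rho(S)+2)-1=\rho(S)+1\).

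\emph{Main obstacle.} I expect integrality to be the hard step: showing that \(\lambda_\vv\) lands in, and surjects onto, the integral \(H^2(M,\mathbb{Z})\), rather than merely giving a rational isomorphism. The natural construction introduces denominators \(\sigma\). Controlling them needs Perego--Rapagnetta's explicit analysis, which I would cite here rather than reprove.
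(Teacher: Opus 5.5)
Deferring to Perego--Rapagnetta~\cite[Thm.\@ 1.6]{PeregoRapagnetta2024SecondIntegralCohomology} is exactly what the paper does. It gives no argument beyond the citation and the rational normalisation $j^\ast\lambda_\vv(\mathbf w)=\frac1\rho\lambda_{\mathcal E}(\mathbf w)$ on the stable locus. Your construction on $M^s_\vv(S,H)$, your independence-of-$\mathscr E$ remark and your final rank count agree with this. The trouble is the intermediate sketch, which would not work as written.

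First, your codimension claim is false in case (iii). For $\vv=2\vv_0$ with $\vv_0^2=2$, the singular locus $\operatorname{Sym}^2M_{\vv_0}$ has dimension $8$ inside the $10$-dimensional $M_\vv$, with transverse $A_1$-singularities. This is precisely why O'Grady's blow-up has an exceptional divisor and $b_2$ rises to $24$.

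Second, large codimension, rational singularities and $\pi_1(M^{\mathrm{reg}})=1$ do not make $j^\ast\colon H^2(M_\vv,\mathbb Z)\to H^2(M^s_\vv,\mathbb Z)$ bijective. The kernel and cokernel are controlled by $H^2_Z$ and $H^3_Z$ along the singular locus $Z$, hence by the integral cohomology of the links of the strata, and these links are not homology spheres. For instance, for $m=2$ the transverse slice at a general point of the stratum of type $(1,\vv_0;1,\vv_0)$ is $\overline{\mathcal O}^{\mathfrak{sl}_N}_{\mathrm{min}}$ with $N=\vv_0^2$, as in Lemma~\ref{lem:Kronecker-quiver-tau}. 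Its link has $H^2\cong\mathbb Z$ for $N\geq 3$, and is $\mathbb{RP}^3$ for $N=2$. So extending a transcendental class $\lambda_{\mathcal E}(\mathbf w)$ from $M^s_\vv$ to $M_\vv$ is a genuine step, not formal purity.

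Third, restricting along $M_{\vv_0}\to M_\vv$, $E\mapsto E^{\oplus m}$, cannot yield the isomorphism. This map lands in the deepest singular stratum. Evaluating $\lambda_\vv(\mathbf w)$ there for transcendental $\mathbf w$ presupposes a construction of $\lambda_\vv$ on all of $M_\vv$ that is compatible with families of semistable sheaves, which is again the missing extension step. Even granting $r\circ\lambda_\vv=m\,\lambda_{\vv_0}$, you get only injectivity. You also learn that the index of $\lambda_\vv(\vv^\perp)$ in its saturation involves only primes dividing $m$. Nothing bounds $\rk H^2(M_\vv,\mathbb Z)$, since $r$ may have a kernel. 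So surjectivity is not proved, and your deduction of $b_2=23$ is circular.

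The isometry is also not addressed. The Beauville--Bogomolov--Fujiki form of $M_\vv$ is not the restriction of that of $M_{\vv_0}$, and computing it is a separate input. The correct proof here is the citation. The surrounding sketch should not be presented as an argument for case (iv), nor for case (iii) beyond quoting the known identification of $H^2$ of the O'Grady singular model with $\vv^\perp$.
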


We fix the normalisation of \(\lambda_{\vv}\). Let \(j\colon M^s_\vv(S,H)\hookrightarrow M_\vv(S,H)\) be the inclusion of the stable locus. On \(S\times M^s_\vv(S,H)\) there exists a quasi-universal family \(\mathcal{E}\) of similitude \(\rho\) in the sense of~\cite[Def.\@ 4.6.1]{HuybrechtsLehn2010GeometryModuliSheaves}. The cohomological Le Potier construction~\cite[Def.\@ 4.2]{PeregoRapagnetta2024SecondIntegralCohomology}, a natural generalisation of the Le Potier morphism of~\cite[Def.\@ 8.1.1]{HuybrechtsLehn2010GeometryModuliSheaves}, assigns to any class \(\mathbf{w}\in \widetilde H(S,\mathbb{Z})\) a natural class 
\[\lambda_{\mathcal{E}}(\mathbf{w})\in H^2(M^s_\vv(S,H),\mathbb{Z}).\] 
After tensoring with \(\mathbb{Q}\), the restriction morphism
\[
  j^\ast \colon H^2(M_\vv(S,H),\mathbb{Q}) \to H^2(M^s_\vv(S,H),\mathbb{Q})
\]
normalises the canonical Mukai morphism by
\[
  j^\ast\lambda_{\vv}(\mathbf{w})=\frac{1}{\rho}\lambda_{\mathcal{E}}(\mathbf{w})\in H^2(M^s_\vv(S,H),\mathbb{Q}), \quad \mathbf{w}\in \vv^\perp,
\]
according to~\cite[Prop.\@ 4.4(2), Def.\@ 4.6(1)]{PeregoRapagnetta2024SecondIntegralCohomology}. 
This normalised class is independent of the choice of quasi-universal family when evaluated on \(\vv^\perp\), by~\cite[Lemma 4.3(3)]{PeregoRapagnetta2024SecondIntegralCohomology}.

\smallskip
For later use, we record the resulting explicit degree computation on projective spaces of extensions:

\begin{example}\label{ex:Mukai-Lepotier-extensions} Assume \(\vv=\vv_1+\vv_2\in \widetilde H_{\mathrm{alg}}(S,\mathbb{Z})\). Let \(\mathscr{F}\) and \(\mathscr{G}\) be stable sheaves on \((S,H)\) with \(\vv(\mathscr{F})=\vv_1\) and \(\vv(\mathscr{G})=\vv_2\). Suppose that all non-split extensions
  \[  0\to \mathscr{G}\to \mathscr{E}\to \mathscr{F}\to 0 \]
are stable. Let \(V\coloneqq \Ext^1(\mathscr{F}, \mathscr{G})\). The projective space \(\mathbb{P}V\) parametrising such extensions has a natural modular morphism \(f_{\mathcal{E}} \colon \mathbb{P}V\to M_\vv(S, H)\), induced by the universal extension \(\mathcal{E}\) on \(S\times \mathbb{P}V\) 
 \[  0\to p^\ast \mathscr{G}\otimes q^\ast \mathscr{O}_{\mathbb{P}V}(1) \to \mathcal{E} \to p^\ast \mathscr{F} \to 0, \]
 where \(p\) and \(q\) are the natural projections onto \(S\) and \(\mathbb{P}V\), respectively. For any \(\mathbf{w}\in \vv^\perp\), it follows from~\cite[Lemma 4.3, Prop.\@ 4.4]{PeregoRapagnetta2024SecondIntegralCohomology} that the restriction of \(\lambda_{\vv}(\mathbf{w})\) to \(\mathbb{P}V\) is
 \[ f_{\mathcal{E}}^\ast \lambda_{\vv}(\mathbf{w})= (\mathbf{w}\cdot \vv_2) \ c_1(\mathscr{O}_{\mathbb{P}V}(1))=-(\mathbf{w}\cdot \vv_1)\ c_1(\mathscr{O}_{\mathbb{P}V}(1))\in H^2(\mathbb{P}V, \mathbb{Z}). \]
\end{example}

\subsection{Local structure: Kuranishi models, symplectic reduction and Ext-quivers}
\label{subsec:Kuranishi-local-model}
Let \(\mathscr{F}\) be a polystable sheaf on \((S,H)\) with Mukai vector \(\vv\coloneqq \vv(\mathscr{F}) \in \widetilde H_{\mathrm{alg}}(S,\mathbb{Z})\). We describe the local structure of \(M_{\vv}(S, H)\) at the point \([\mathscr{F}]\). Deformation theory in the sense of Schlessinger--Rim~\cite{Schlessinger1968FunctorsArtinRings,Rim1972FormalDeformationTheory} provides a versal deformation space \(\mathrm{Def}_{\mathscr{F}}\coloneqq \mathrm{Spf}(R)\), where \(R\) is a complete Noetherian local \(\mathbb{C}\)-algebra. The reductive automorphism group \(G\coloneqq \mathrm{Aut}(\mathscr{F})\) acts naturally on the deformation functor, and hence \(\mathrm{Def}_{\mathscr{F}}\) comes with a natural \(G\)-action. Passing to good moduli spaces identifies the completed local ring of the moduli space with the invariant ring
\begin{equation}
  \label{eq:moduli-space-formally-SpfR}
  \hat{\mathscr{O}}_{M_{\vv}(S,H), [\mathscr{F}]}\cong R^G.
\end{equation}

By Kuranishi theory and the formality result established in~\cite{BudurZhang2019FormalityK3, BandieraManettiMeazzini2021FormalityMinimalSurfaces}, \(\mathrm{Def}_\mathscr{F}\) has an explicit algebraic model as the zero locus of the \(G\)-equivariant Yoneda square map
\begin{equation}
  \label{eq:Yoneda-square-mu}
  \mu \colon \Ext^1(\mathscr{F}, \mathscr{F})\to \Ext^2_0(\mathscr{F}, \mathscr{F}), \quad e\mapsto e\cup e.
\end{equation}
In other words, the germ of the quadratic cone \(\mu^{-1}(0)\subset \Ext^1(\mathscr{F}, \mathscr{F})\) at the origin is a representative of \(\mathrm{Def}_\mathscr{F}\), namely there is a \(G\)-equivariant isomorphism
\begin{equation*}
  \hat{\mathscr{O}}_{\mu^{-1}(0), 0}\cong R.
\end{equation*}
Consequently, after~\eqref{eq:moduli-space-formally-SpfR}, the affine GIT quotient \(\mu^{-1}(0)\gitq G\) provides a local model for the moduli space \(M_{\vv}(S,H)\) at the point \([\mathscr{F}]\)~\cite[\textsection4]{ArbarelloSacca2018K3QuiverSingularities}.

\smallskip
Following~\cite{KaledinLehnSorger2006SingularSymplecticModuli}, the quotient \(\mu^{-1}(0)\gitq G\) admits a natural interpretation as a symplectic reduction. More precisely, write the polystable decomposition of \(\mathscr{F}\) as
\begin{equation*}
  \mathscr{F}= \bigoplus_{i=1}^k \mathscr{F}_i \otimes U_i,
\end{equation*}
where the \(\mathscr{F}_i\) are pairwise non-isomorphic stable sheaves with Mukai vectors \(\vv_i\coloneqq \vv(\mathscr{F}_i)\in \widetilde H_{\mathrm{alg}}(S,\mathbb{Z})\), and the \(U_i\) are complex vector spaces of dimension \(n_i\in \mathbb{N}\). In analogy with quiver representations~\cite{LeBruynProcesi1990SemisimpleQuivers, CrawleyBoevey2001MomentMapQuivers}, the \emph{polystable type} of \(\mathscr{F}\) is
\begin{equation*}
  \mathfrak{t} \coloneqq (n_1, \vv_1; \dots ; n_k, \vv_k).
\end{equation*}
Since \(\mathscr{F}\) is \(H\)-polystable, the stable summands \(\mathscr{F}_i\) all have the same reduced Hilbert polynomial as \(\mathscr{F}\). In particular, for all \(i=1,\dots,k\), one has
\[
  p_H(\vv_i)=p_H(\vv).
\]

\smallskip
For each \(\ell\geq 0\) there are \(G\)-equivariant decompositions
\begin{equation*}
  \Ext^\ell (\mathscr{F}, \mathscr{F})\cong \bigoplus_{i, j} \Ext^\ell(\mathscr{F}_i, \mathscr{F}_j) \otimes \Hom(U_i, U_j),
\end{equation*}
where \(G\cong \prod_{i=1}^k \mathrm{Aut}(U_i) \cong \prod_{i=1}^k \mathrm{GL}_{n_i}\) acts on the second factor of each direct summand. The central subgroup \(\mathbb{G}_m\subset G\) of scalar automorphisms acts trivially, so we may also replace \(G\) by the quotient \(\mathbb{P}G\coloneqq G/\mathbb{G}_m\).

For each pair \((i, j)\) Serre duality provides a skew-symmetric perfect pairing
\begin{equation*}
  \Ext^1(\mathscr{F}_i, \mathscr{F}_j)\otimes \Ext^1(\mathscr{F}_j, \mathscr{F}_i)\to H^2(S, \mathscr{O}_S)\cong \mathbb{C}, \quad e_1\otimes e_2 \mapsto \mathrm{tr}(e_1\cup e_2).
\end{equation*}
Together with the standard symmetric trace pairing on \(\Hom(U_i, U_j)\otimes \Hom(U_j, U_i)\), these induce a natural \(G\)-invariant symplectic form on \(\Ext^1(\mathscr{F}, \mathscr{F})\). Moreover, stability implies \(\Ext^2(\mathscr{F}_i, \mathscr{F}_j)\cong \Hom(\mathscr{F}_j, \mathscr{F}_i)^\vee =0\) for \(i\neq j\), so that
\begin{equation*}
  \Ext^2_0(\mathscr{F}, \mathscr{F})\cong \ker \left( \bigoplus_{i=1}^k \mathrm{End}(U_i)\overset{\mathrm{tr}}{\to} \mathbb{C} \right),
\end{equation*}
which identifies canonically with \(\mathrm{Lie}(\mathbb{P}G)^\vee \). Under these identifications, the Yoneda square map~\eqref{eq:Yoneda-square-mu} coincides with the moment map for the symplectic \(\mathbb{P}G\)-action on the symplectic vector space \(\Ext^1(\mathscr{F}, \mathscr{F})\). In particular, the quotient
\[ \mu^{-1}(0)\gitq G \cong \Ext^1(\mathscr{F}, \mathscr{F})\sympq \mathbb{P}G \]
is the affine symplectic reduction of \(\Ext^1(\mathscr{F}, \mathscr{F})\) by \(\mathbb{P}G\).

\smallskip
Thus the analytic type of the moduli space is controlled by the Ext-quiver of the polystable decomposition. More precisely, the above linear algebraic data can be encoded by a quiver \(Q\), together with the dimension vector \(\mathbf{n}\coloneqq (n_1, \dots, n_k)\in \mathbb{N}^k\). The non-oriented Ext-graph underlying \(Q\) depends only on the polystable type \(\mathfrak{t}\) of \(\mathscr{F}\), and more precisely on the number \(k\geq 1\) of non-isomorphic stable summands, giving the vertices of \(Q\), and on the matrix
\begin{equation*}
  \dim \Ext^1(\mathscr{F}_i, \mathscr{F}_j)=\vv_i\cdot \vv_j + 2\delta_{ij},
\end{equation*}
whose entries give the number of edges between vertices \(i\) and \(j\) for \(i\neq j\), and \emph{twice} the number of loops for \(i=j\). After choosing an orientation of these edges, one obtains \(Q\). See~\cite{ArbarelloSacca2018K3QuiverSingularities, Toda2018ModuliStacksExtQuivers} for a precise description. With this notation, there is a natural isomorphism of symplectic \(G\)-representations
\begin{equation*}
  \Ext^1(\mathscr{F}, \mathscr{F})\cong T^\vee \mathrm{Rep}(Q, \mathbf{n})\cong \mathrm{Rep}(\overline{Q}, \mathbf{n})
\end{equation*}
where \(\overline{Q}\coloneqq Q\sqcup Q^{\mathrm{op}}\) denotes the doubled quiver. Consequently, the local model \(\mu^{-1}(0)\gitq G\) identifies with the affine \emph{Nakajima quiver variety}
\begin{equation*}
  \mathfrak{M}(Q, \mathbf{n})\coloneqq \mathrm{Rep}(\overline{Q}, \mathbf{n})\sympq \mathrm{GL}(\mathbf{n}).
\end{equation*}
This description allows one to analyse the local structure of \(M_{\vv}(S,H)\) near \([\mathscr{F}]\) using the literature on quiver representations. In particular, many local properties of the moduli space can be read off from the combinatorics of the associated quiver, using the structural results of~\cite{CrawleyBoevey2001MomentMapQuivers, CrawleyBoevey2003MarsdenWeinsteinNormality, BellamySchedler2021QuiverResolutions}.

\subsection{Stratification by polystable type}
\label{subsec:stratification-polystable-type}

The following result is the natural counterpart of the stratification by representation type of quiver varieties~\cite{LeBruynProcesi1990SemisimpleQuivers, CrawleyBoevey2001MomentMapQuivers} in the context of moduli spaces of sheaves on symplectic surfaces.

\begin{lemma}[\cite{BayerMacri2014MMPModuliSheaves, ArbarelloSacca2018K3QuiverSingularities, BrakkeeEtAl2026RelativePrym}]
  \label{lem:stratification-polystable-type}
  There is a finite stratification of \(M_{\vv}(S,H)\) by polystable type, whose strata are smooth and isosingular.
\end{lemma}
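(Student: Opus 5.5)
Define, for each polystable type \(\mathfrak{t}=(n_1,\vv_1;\dots;n_k,\vv_k)\) with \(\sum_i n_i\vv_i=\vv\) and \(p_H(\vv_i)=p_H(\vv)\), the subset \(M_{\mathfrak{t}}\subset M_\vv(S,H)\) of points \([\mathscr{F}]\) whose polystable representative has type \(\mathfrak{t}\).

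\textbf{Finiteness and constructibility.} The Mukai vectors \(\vv_i\) range over classes of stable subsheaves of \(H\)-semistable sheaves with vector \(\vv\). By boundedness of the family of semistable sheaves and of their Jordan--H\"older factors (Grothendieck's lemma, \cite{HuybrechtsLehn2010GeometryModuliSheaves}), only finitely many such \(\vv_i\), and hence finitely many types \(\mathfrak{t}\), occur. Each \(M_{\mathfrak{t}}\) is the image of the morphism
\[
\Phi_{\mathfrak{t}}\colon \prod_{i=1}^k \operatorname{Sym}^{n_i}M^s_{\vv_i}(S,H)\to M_\vv(S,H),
\]
given by direct sum, restricted to the open locus where all summands \(\mathscr{F}_i\) are pairwise non-isomorphic. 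Hence \(M_{\mathfrak{t}}\) is constructible, and together the \(M_{\mathfrak{t}}\) cover \(M_\vv(S,H)\). I would show that \(\Phi_{\mathfrak{t}}\) is injective on that open locus and unramified there; this gives an identification of \(M_{\mathfrak{t}}\) with the configuration space of distinct points in \(\prod_i M^s_{\vv_i}(S,H)\), modulo the symmetric groups permuting summands with equal \(\vv_i\). Each \(M^s_{\vv_i}(S,H)\) is smooth of dimension \(\vv_i^2+2\), since the obstruction \(\Ext^2_0\) vanishes for stable sheaves on a K3 surface. So \(M_{\mathfrak{t}}\) is smooth, and a standard refinement argument makes the closure relations into a stratification.

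\textbf{Isosingularity.} This is the main content. By \S\ref{subsec:Kuranishi-local-model}, the germ of \(M_\vv(S,H)\) at \([\mathscr{F}]\) is analytically isomorphic to the germ at \(0\) of \(\mathfrak{M}(Q,\mathbf{n})=\Ext^1(\mathscr{F},\mathscr{F})\sympq\mathbb{P}G\). The quiver \(Q\) and the vector \(\mathbf{n}\) depend only on \(\mathfrak{t}\), through the pairings \(\vv_i\cdot\vv_j\). So any two points of \(M_{\mathfrak{t}}\) have isomorphic analytic germs.

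To make "isosingular" precise in the sense of a locally trivial product structure along the stratum, I would use the following decomposition. The loops at vertex \(i\) contribute \(\Ext^1(\mathscr{F}_i,\mathscr{F}_i)\otimes\mathrm{End}(U_i)\). Its \(\mathrm{Id}_{U_i}\)-component \(\Ext^1(\mathscr{F}_i,\mathscr{F}_i)\) is a trivial symplectic summand, of total dimension \(\sum_i(\vv_i^2+2)=\dim M_{\mathfrak{t}}\). Splitting this summand off gives
\[
\mathfrak{M}(Q,\mathbf{n})\cong \mathbb{C}^{\dim M_{\mathfrak{t}}}\times \mathfrak{M}',
\]
where the first factor is the tangent space to the stratum and \(\mathfrak{M}'\) is a quiver variety with an isolated stratum at \(0\) of type \(\mathfrak{t}\). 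Hence the stratum is smooth, with constant normal singularity type \(\mathfrak{M}'\).

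\textbf{Main obstacle.} The delicate step is verifying that the Kuranishi/formality identification \eqref{eq:moduli-space-formally-SpfR} is compatible with the stratification. Concretely, the type-\(\mathfrak{t}\) locus near \([\mathscr{F}]\) must correspond to the representation-type stratum of \(\mathfrak{M}(Q,\mathbf{n})\) through \(0\), which is the "closed orbits with stabilizer conjugate to \(G\)" locus. I would handle this using Luna's étale slice theorem, applied to the \(G\)-equivariant model \(\mu^{-1}(0)\). Points with stabilizer exactly \(G\) form the fixed subspace \(\Ext^1(\mathscr{F},\mathscr{F})^G=\bigoplus_i\Ext^1(\mathscr{F}_i,\mathscr{F}_i)\), and this is precisely the trivial summand split off above. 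For the comparison of representation-type stratifications on the quiver side, I would cite Crawley-Boevey \cite{CrawleyBoevey2001MomentMapQuivers}.
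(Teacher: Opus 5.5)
Your argument is essentially the paper's: finitely many types, each stratum obtained as the image of the direct-sum morphism on products of stable loci with the diagonals removed, smoothness because each \(M^s_{\vv_i}(S,H)\) is smooth and the reordering group acts freely, and isosingularity because the Ext-quiver model \(\Ext^1(\mathscr{F},\mathscr{F})\sympq\mathbb{P}G\) depends only on \(\mathfrak{t}\). The only real difference is in finiteness: you use boundedness of Jordan--H\"older factors, while the paper uses \(\vv_i^2\geq -2\) together with \(p_H(\vv_i)=p_H(\vv)\). Both work.

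There is one slip you should correct. The source of \(\Phi_{\mathfrak{t}}\) must be \(\prod_i M^s_{\vv_i}(S,H)\) with \(\mathscr{F}_i\mapsto\mathscr{F}_i^{\oplus n_i}\), i.e.\ the small diagonals of \(\operatorname{Sym}^{n_i}\). It is not the open locus of \(\prod_i\operatorname{Sym}^{n_i}M^s_{\vv_i}(S,H)\) where all summands are pairwise non-isomorphic. On that locus every multiplicity equals one, so whenever some \(n_i\geq 2\) its image is a different, more generic stratum. Your later description of the stratum as a configuration space in \(\prod_i M^s_{\vv_i}(S,H)\), and your count \(\dim M_{\mathfrak{t}}=\sum_i(\vv_i^2+2)\), already use the correct source. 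Likewise, the finite group should permute indices with equal pairs \((n_i,\vv_i)\), not merely equal \(\vv_i\).

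Your extra splitting \(\mathfrak{M}(Q,\mathbf{n})\cong\mathbb{C}^{\dim M_{\mathfrak{t}}}\times\mathfrak{M}'\) is correct. The moment map has no cross terms with the \(G\)-invariant summand, by skew-symmetry of the trace pairing. Combined with the Luna-slice identification of the type-\(\mathfrak{t}\) locus with \(\Ext^1(\mathscr{F},\mathscr{F})^G\), it gives a local product form of isosingularity and an independent local proof of smoothness. The paper does not need this, since it defines isosingularity only as isomorphism of complete local rings.
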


Recall that a locally closed subscheme \(Z\subset X\) is \emph{isosingular} if the singularity type of \(X\) is the same at all points of \(Z\). Equivalently, for any two closed points \(x, y\in Z\), there is an isomorphism of complete local \(\mathbb{C}\)-algebras \(\hat{\mathscr{O}}_{X, x}\cong \hat{\mathscr{O}}_{X, y}\)~\cite{Ephraim1978IsosingularLoci, ChiuHauser2022IsosingularLoci}.

\begin{proof}
  First note that there are only finitely many polystable types \(\mathfrak{t} \coloneqq (n_1, \vv_1;\dots; n_k, \vv_k)\) with \(\sum_{i=1}^k n_i \vv_i=\vv\), since the \(\vv_i\) must satisfy \(\vv_i^2\geq -2\) and \(p_H(\vv_i)=p_H(\vv)\), while the \(n_i\) are positive integers. For such a type \(\mathfrak{t}\), consider the direct-sum morphism
  \begin{equation}\label{eq:direct-sum-morphism}
    \prod_{i=1}^k M_{\vv_i}(S,H)\to M_{\vv}(S,H), \quad ([\mathscr{F}_1], \dots, [\mathscr{F}_k]) \mapsto [\mathscr{F}_1^{\oplus n_1}\oplus \cdots \oplus \mathscr{F}_k^{\oplus n_k}].
  \end{equation}
  Now for each \(i\neq j\) let \(\Delta_{ij}\subset \prod_{i=1}^k M_{\vv_i}(S,H)\) be either empty if \(\vv_i\neq \vv_j\) or the diagonal in \(M_{\vv_i}(S,H)\times M_{\vv_j}(S,H)\) if \(\vv_i=\vv_j\). We define \(M_{\vv}^\mathfrak{t} (S,H)\) to be the image under~\eqref{eq:direct-sum-morphism} of the smooth open subset
  \begin{equation*}
    U\coloneqq \prod_{i=1}^k M^s_{\vv_i}(S,H) \setminus \bigcup_{i\neq j} \Delta_{ij}.
  \end{equation*}
  Since~\eqref{eq:direct-sum-morphism} is finite, the image of \(U\) is locally closed in \(M_\vv(S,H)\). The ambiguity in the order of the stable summands is encoded by a finite group acting freely on \(U\), so \(M^\mathfrak{t}_{\vv}(S,H)\) is smooth. Finally, by the discussion above, the \'etale-local model of \(M_{\vv}(S,H)\) at a point \([\mathscr{F}]\) depends only on its polystable type. Thus it is the same for all points of \(M_{\vv}^\mathfrak{t}(S,H)\), and the stratum \(M_{\vv}^\mathfrak{t}(S,H)\) is isosingular.
\end{proof}

The stratum \(M_{\vv}^\mathfrak{t}(S,H)\) is non-empty if and only if the stable loci \(M^s_{\vv_i}(S,H)\) are non-empty for all \(i\). As in~\cite[\textsection4]{LeBruynProcesi1990SemisimpleQuivers}, there is a natural partial order on the set of polystable types \(\mathfrak{t}\) with \(\sum_{i=1}^k n_i \vv_i=\vv\), induced by elementary successor moves of the form
\begin{equation*}
  \begin{split}
    (n, \mathbf{w}_1+\mathbf{w}_2)  \leadsto (n, \mathbf{w}_1)+(n, \mathbf{w}_2), \\
    (n_1, \mathbf{w})+(n_2, \mathbf{w}) \leadsto (n_1+n_2, \mathbf{w}).
  \end{split}
\end{equation*}
In terms of this partial order, one has \(\mathfrak{t} \preceq \mathfrak{t}'\) if and only if the stratum \(M_{\vv}^{\mathfrak{t}'}(S,H)\) is contained in the closure of \(M_{\vv}^\mathfrak{t}(S,H)\).

\smallskip
The unique minimal stratum of type \((1,\vv)\) is precisely the stable locus \(M_{\vv}^s(S,H)\). If it is non-empty---for instance when \(\vv^2>0\) and \(H\) is \(\vv\)-generic~\cite[Thm.\@ 4.4]{KaledinLehnSorger2006SingularSymplecticModuli}---the union of all the other strata consists of strictly polystable sheaves and is the singular locus of \(M_{\vv}(S,H)\)~\cite[Prop.\@ 6.1]{KaledinLehnSorger2006SingularSymplecticModuli}. Note that when \(M_{\vv}(S,H)\) is a symplectic variety in the sense of Definition~\ref{def:zoo-symplectic-varieties}---this is known, for instance, when \(H\) is \(\vv\)-generic by Theorem~\ref{thm:global-geometry-Mv-delPezzo}---then the above stratification by polystable type is the stratification by symplectic leaves of~\cite[Thm.\@ 2.3]{Kaledin2006PoissonSymplecticSingularities}.

\section{Beauville--Mukai systems on K3-del Pezzo double covers}
\label{sec:BM-delPezzo-double-covers}

This section studies Beauville--Mukai systems on K3-del Pezzo double covers, realised as relative compactified Jacobians of the pull-back of the anticanonical linear system. Our chosen polarisation is typically non-generic, and the resulting moduli spaces are singular. We analyse their global geometry and describe their singular locus via the stratification by polystable type and the chamber-to-wall partial resolutions from adjacent stability chambers.

\subsection{2-elementary K3 surfaces and right DPN pairs}
We recall the standard dictionary between 2-elementary K3 surfaces and right DPN pairs. Following~\cite[Def.\@ 1.4]{Yoshikawa2004K3InvolutionAnalyticTorsion}, a \emph{2-elementary K3 surface} is a pair \((S,\iota)\), where \(S\) is a K3 surface and \(\iota\) is an anti-symplectic involution. We denote the invariant lattice by
\[
  L_+\coloneqq H^2(S,\mathbb{Z})^\iota \subset H^2(S,\mathbb{Z}).
\]
Since \(\iota\) acts as \(-1\) on \(H^{2,0}(S)\), one has \(L_+\subset \NS(S)\). Recall that an even lattice \(L\) is called \emph{2-elementary} if its discriminant group \(A_L\coloneqq L^\vee/L\) is a 2-elementary abelian group, namely \(A_L\cong(\mathbb{Z}/2\mathbb{Z})^a\) for some \(a\geq 0\). Nikulin's lattice-theoretic description~\cite{Nikulin1983FactorGroupsAutomorphismsHyperbolicForms} says that \(L_+\) is a primitive, hyperbolic, even, and 2-elementary sublattice of the K3 lattice \(\Lambda_{\mathrm{K3}}\). Conversely, by Torelli one can realise any primitive hyperbolic 2-elementary sublattice \(L\subset \Lambda_{\mathrm{K3}}\) as \(\NS(S)\) for a K3 surface \(S\), together with an anti-symplectic involution \(\iota\) whose cohomological action is the identity on \(L\) and \(-\mathrm{id}\) on \(L^\perp\); in particular \(L=H^2(S,\mathbb{Z})^\iota\)~\cite[\textsection4.2]{Nikulin1983FactorGroupsAutomorphismsHyperbolicForms}. 

\smallskip
The isometry class of \(L_+\), and hence the deformation type of \((S,\iota)\), is encoded by Nikulin's \emph{main invariant}
\[
  (r,a,\delta)\in \mathbb{N}\times \mathbb{N}\times \mathbb{Z}/2\mathbb{Z},
\]
where \(r=\rk L_+\), \(a\) is the length of \(A_{L_+}\), and \(\delta\) records the parity of the discriminant form~\cite[Thm.\@ 4.3.2]{Nikulin1983FactorGroupsAutomorphismsHyperbolicForms}. The admissible triples are listed in~\cite[Table~1]{Nikulin1983FactorGroupsAutomorphismsHyperbolicForms}; there are \(75\) of them, hence \(75\) deformation types of 2-elementary K3 surfaces. The corresponding moduli spaces \(\mathcal{M}_{(r,a,\delta)}\) are irreducible of dimension \(20-r\)~\cite[Rmk.\@ 4.5.3]{Nikulin1983FactorGroupsAutomorphismsHyperbolicForms},~\cite[\textsection1.4]{Yoshikawa2004K3InvolutionAnalyticTorsion}, and are closely related to Dolgachev's moduli spaces of ample lattice-polarised K3 surfaces~\cite{Dolgachev1996MirrorSymmetryLatticePolarizedK3}. For a primitive 2-elementary hyperbolic sublattice \(L\subset \Lambda_{\mathrm{K3}}\) with main invariant \((r,a,\delta)\), both theories use (an open subset of) the type-IV period domain
\[
  \Omega_L\coloneqq \{[\sigma]\in \mathbb{P}(L_\mathbb{C}^\perp)\mid (\sigma,\sigma)=0,\;(\sigma,\overline{\sigma})>0\},
\]
which is the Noether--Lefschetz locus in the full period domain where \(L\) remains algebraic. The marked K3 surface \(S\) associated with its very general point has \(\NS(S)\cong L\); compare~\cite[Cor.\@ 2.3]{Dolgachev1996MirrorSymmetryLatticePolarizedK3} and~\cite[\textsection1.4(a)]{Yoshikawa2004K3InvolutionAnalyticTorsion}.

\smallskip
On the geometric side, the fixed locus \(S^\iota\) is a disjoint union of smooth irreducible curves, and its geometry is again determined by the main invariant~\cite[\textsection4.2]{Nikulin1983FactorGroupsAutomorphismsHyperbolicForms}. If \(S^\iota=\varnothing\), equivalently \((r,a,\delta)=(10,10,0)\), then the quotient \(T\coloneqq S/\langle\iota\rangle\) is an Enriques surface and \(L_+\) is the image of \(H^2(T,\mathbb{Z})_{\mathrm{free}}\) under pull-back. When \(S^\iota\) is non-empty, \(T\) is a smooth rational surface and the quotient morphism \(f\colon S\to T\) is a double cover branched along the smooth divisor \(B\coloneqq f(S^\iota)\in |-2K_T|\). Conversely, a smooth rational surface \(T\) together with a smooth divisor \(B\in |-2K_T|\), called a \emph{right DPN pair} in~\cite[Def.\@ 2.1]{AlexeevNikulin2006DelPezzoK3}, determines a K3 double cover with anti-symplectic covering involution.

\smallskip
We consider the cases in which the quotient \(T\coloneqq S/\langle\iota\rangle\) is a smooth del Pezzo surface, i.e.\@ \(-K_T\) is ample. In terms of Nikulin's description of the fixed locus, this implies that \(R\coloneqq S^\iota\) is a single smooth curve of genus \(d+1\), where \(d\coloneqq K_T^2=10-r\) is the degree of \(T\). The corresponding cases are:
\[
\begin{array}{c|c|c|c|c}
  (r,a,\delta) & T & d\coloneqq K_T^2 & \rho(T)=r & \dim \mathcal{M}_{(r,a,\delta)} \\
  \hline
  (1,1,1) & \mathbb{P}^2 & 9 & 1 & 19 \\
  (2,2,0) & \mathbb{P}^1\times\mathbb{P}^1 & 8 & 2 & 18 \\
  (r,r,1),\ 2\leq r\leq 9 & \mathrm{Bl}_{r-1}\mathbb{P}^2 & 10-r & r & 20-r
\end{array}
\]
In the last row \(\mathrm{Bl}_{r-1}\mathbb{P}^2\) denotes the blow-up of the projective plane at \(r-1\) points in \emph{del Pezzo general position}~\cite[\textsection8.1.3]{Dolgachev2012ClassicalAlgebraicGeometry}. We call the resulting quotient morphism \(f\colon S\to T\), or equivalently the right DPN pair \((T,B)\), a \emph{K3-del Pezzo double cover}. For a very general such double cover---namely a very general point of \(\mathcal{M}_{(r,a,\delta)}\)---one has
\[
  \NS(S)=f^\ast\NS(T).
\]
The reason we restrict to these cases is that the anticanonical system \(|-K_T|\) is a moving family whose general member is a smooth genus-one curve. Hence, over a general \(E\in |-K_T|\), the curve \(C=f^{-1}(E)\) is a genus-\(d+1\) double cover of a genus-one curve, giving the uniform Prym geometry used below.

\subsection{Beauville--Mukai systems on K3-del Pezzo double covers}
Let \(f\colon S\to T\) be a very general K3-del Pezzo double cover, where \(T\) is a degree \(d\) del Pezzo surface. The morphism \(f\) is branched along a smooth curve \(B\in |-2K_T|\). As above, denote by \(\iota\in \mathrm{Aut}(S)\) the covering involution, and let \(H\coloneqq -f^\ast K_T\in \NS(S)\). Then \(H\) is an \(\iota\)-invariant polarisation of degree \(H^2=2d\). We set
\[ \vv\coloneqq (0, H, -d)\in \widetilde H_{\mathrm{alg}}(S,\mathbb{Z}) \]
and consider the moduli space \(M_{\vv}(S, H)\) of \(H\)-semistable sheaves on \(S\) with Mukai vector \(\vv\). It is a symplectic variety of dimension \(2d+2\) by~\cite[Prop.\@ 3.5]{BrakkeeEtAl2026RelativePrym}, whether or not \(H\) lies on a \(\vv\)-wall; reducedness and normality follow from the local description in \textsection\ref{subsec:Kuranishi-local-model} and the combinatorics of the associated Ext-quivers, using~\cite{CrawleyBoevey2001MomentMapQuivers, CrawleyBoevey2003MarsdenWeinsteinNormality}. It carries a natural Lagrangian fibration
\begin{equation}
  \label{eq:Beauville-Mukai-system}
  \pi\colon M_{\vv}(S,H)\to |H|\cong \mathbb{P}^{d+1}
\end{equation}
given by sending a sheaf to its Fitting support. This is the \emph{Beauville--Mukai system} associated with \((S,H)\). Over the open locus of smooth curves in \(|H|\), the fibration is the relative Jacobian of the family of genus \(d+1\) curves, and \(M_{\vv}(S,H)\) provides a symplectic compactification by rank-one torsion-free sheaves on the singular members. Since line bundles supported on integral curves are stable for any polarisation, the stable locus \(M^s_{\vv}(S,H)\) is non-empty.

\smallskip
When \(d=1\), every member of \(|H|\) is integral. Hence every pure sheaf with Mukai vector \(\vv\) is stable for any polarisation, so \(M_{\vv}(S,H)\) is independent of \(H\in \Amp(S)\) and agrees with the \emph{intrinsic} relative compactified Jacobian over \(|H|\) of~\cite{AltmanKleiman1980CompactifyingPicardSchemeI}; in particular \(M^s_{\vv}(S,H)=M_{\vv}(S,H)\) is smooth. 

When \(d\geq 2\), there are reducible members of \(|-K_T|\), and hence of \(|H|\). In this case different stability chambers give different compactifications of the relative Jacobian over the non-integral locus. Since our choice of polarisation is not \(\vv\)-generic for \(2\leq d\leq 8\), we have \(M^s_{\vv}(S,H)\subsetneq M_{\vv}(S,H)\), and the moduli space is singular. Moreover, when \(T\cong\mathbb{P}^2\) or \(\mathbb{P}^1\times \mathbb{P}^1\), the Mukai vector \(\vv=(0, H, -d)\in \widetilde H_{\mathrm{alg}}(S,\mathbb{Z})\) is not primitive, so some singularities are structural and do not depend on the choice of polarisation.

\smallskip
In the following we analyse the singularities of \(M_{\vv}(S,H)\) in terms of the stratification by polystable type. We first describe the non-integral supports, reducing the problem to splittings of \(-K_T\in \NS(T)\); then we translate these splittings into minimal polystable strata; finally we use the induced walls and chamber-to-wall morphisms to compute the global invariants. We begin with a simple fact about linear systems of arithmetic genus zero on del Pezzo surfaces.

\begin{lemma}
  \label{lem:delPezzo-linear-systems}
  Let \(D\in \NS(T)\) be a non-zero effective divisor class with \(p_a(D)=0\) on a smooth del Pezzo surface \(T\). If the general member of \(|D|\) is connected, then it is a smooth rational curve and we have
  \[ h^0(T, \mathscr{O}_T(D))=D^2+2. \]
\end{lemma}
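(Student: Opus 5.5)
The plan is to combine Riemann--Roch and Kodaira vanishing on the del Pezzo surface \(T\) with an analysis of the general member of \(|D|\) via Bertini.

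First, since \(p_a(D)=0\), adjunction gives \(D^2+D\cdot K_T=-2\), so \(D\cdot(-K_T)=D^2+2\). Riemann--Roch on the rational surface \(T\) (with \(\chi(\mathscr{O}_T)=1\)) then yields
\[
  \chi(\mathscr{O}_T(D))=1+\tfrac12 D\cdot(D-K_T)=1+\tfrac12(2D^2+2)=D^2+2 .
\]
Next I will show \(h^1(T,\mathscr{O}_T(D))=h^2(T,\mathscr{O}_T(D))=0\). For \(h^2\), Serre duality gives \(h^2(\mathscr{O}_T(D))=h^0(\mathscr{O}_T(K_T-D))\), which vanishes because \((K_T-D)\cdot(-K_T)=-d-D\cdot(-K_T)<0\), using that \(D\) is non-zero effective and \(-K_T\) is ample. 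For \(h^1\), write \(D=K_T+(D-K_T)\) and apply Kawamata--Viehweg vanishing: it suffices that \(D-K_T\) is nef and big. Bigness holds since \(-K_T\) is ample and \(D\) is effective. For nefness, \((D-K_T)\cdot C\geq 0\) for every irreducible curve \(C\) not contained in the base locus of \(|D|\). If instead \(C\) is a fixed component, I will argue that the connectedness of the general member, together with \(p_a=0\), forces \(|D|\) to be base-point free. This is the main obstacle.

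To handle it, I will use the fact that on a del Pezzo surface a complete linear system \(|D|\) with \(D\) effective has no fixed part beyond \((-1)\)-curves, and any effective divisor has no base points when it is nef (a known property of del Pezzo surfaces; for \(d=1\) one must check that \(-K_T\) itself, with \(p_a=1\), is excluded, which it is by \(p_a(D)=0\)). Write \(|D|=|M|+F\) with \(F\) the fixed part. If \(F\neq 0\), connectedness of the general member forces \(M\cdot F\geq 1\), and \(p_a(D)=p_a(M)+p_a(F)+M\cdot F-1\). Since \(M\) is nef and hence base-point free on a del Pezzo surface, \(p_a(M)\geq 0\). The fixed part \(F\) is a combination of \((-1)\)-curves, with \(p_a(F)\geq 1-h^0(\mathscr{O}_F)\); a short check should show that this forces either \(M=0\), or \(F\) to be a single \((-1)\)-curve \(E\) meeting \(M\) once. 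In that case \(D\cdot E=M\cdot E-1=0\), and one verifies directly that \(|D|=|M+E|\) is nonetheless still governed by the same computation. To keep the main line clean, I will simply avoid this issue: first show \(D\) is nef. If \(D\cdot E<0\) for some \((-1)\)-curve \(E\), then \(E\) is a fixed component, and the connectedness and arithmetic-genus bookkeeping above leads to a contradiction unless \(D=E\). The case \(D=E\) satisfies the formula trivially, since \(h^0=1=E^2+2\). For \(D\) nef, \(D-K_T\) is ample, so Kodaira vanishing applies and \(h^0(\mathscr{O}_T(D))=D^2+2\).

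Finally, for \(D\) nef and non-zero, \(|D|\) is base-point free, since nef implies globally generated on del Pezzo surfaces, with the usual care for \(d=1\), where the only base point of a nef system arises from \(|-K_T|\), which has \(p_a=1\). Bertini then makes the general member smooth. Being connected, it is irreducible, and \(p_a=0\) makes it a smooth rational curve. The exceptional case \(D=E\) is already a smooth rational curve.
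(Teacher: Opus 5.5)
Your outer frame agrees with the paper's. You have $\chi(\mathscr{O}_T(D))=D^2+2$ and $h^2=0$ by Serre duality. Once $D$ is known to be nef, Kodaira vanishing for $D-K_T$, Harbourne's base-point-freeness and Bertini finish the proof, and the case of a $(-1)$-curve is trivial.

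The gap is the one step that actually uses connectedness: proving that $|D|$ has no fixed part, i.e.\ that $D$ is nef unless $D$ is a $(-1)$-curve. You assert that ``arithmetic-genus bookkeeping'' gives a contradiction, but this is never carried out. The sketch you give also rests on a false claim: a nef, base-point-free $M$ need not have $p_a(M)\geq 0$. Bertini gives a smooth but possibly disconnected general member when $|M|$ is composed with a pencil; for example, $M=2(h-e)$ on $\mathrm{Bl}_1\mathbb{P}^2$ has $p_a(M)=-1$. Your residual case of a single $(-1)$-curve $E$ with $M\cdot E=1$ is also left open. If it occurred, the general member would be reducible, contradicting the conclusion, so it must be excluded rather than absorbed into ``the same computation''.

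More fundamentally, no purely numerical argument can close this step. Arithmetic genus, intersection numbers and connectedness do not detect whether a curve is a fixed component. Take $M=2(h-e)$ and $F=e$ on $\mathrm{Bl}_1\mathbb{P}^2$: then $M$ is nef and base-point free, $M\cdot F=2$, $p_a(M+F)=0$ and the support is connected. Yet $e$ is not a fixed component of $|2h-e|$.

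The missing idea is to use fixedness cohomologically, as the paper does:
\begin{itemize}
\item For a component $E$ of the fixed part, $h^0(\mathscr{O}_T(M))=h^0(\mathscr{O}_T(M+E))$.
\item $H^1(T,\mathscr{O}_T(M))=0$ by Kodaira vanishing, since $M-K_T$ is ample.
\item The sequence $0\to\mathscr{O}_T(M)\to\mathscr{O}_T(M+E)\to\mathscr{O}_E(M+E)\to 0$ then gives $H^0(E,\mathscr{O}_E(M+E))=0$.
\item Riemann--Roch on the integral curve $E$ forces $E$ to be a $(-1)$-curve with $M\cdot E=0$.
\end{itemize}
Hence $M\cdot F=0$, and the general member of $|D|$ is the disjoint union of a member of $|M|$ and $F$. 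This contradicts connectedness as long as $M\neq 0$. If $M=0$, then $1=h^0(\mathscr{O}_T(D))\geq D^2+2$; together with $-K_T\cdot D=D^2+2\geq 1$ this forces $D^2=-1$ and $-K_T\cdot D=1$, so $D$ is a $(-1)$-curve. With this step in place, the rest of your argument goes through.
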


\begin{proof}
  By adjunction we have \( -K_T\cdot D=D^2+2\). Since \(-K_T\) is ample and \(D\neq 0\) is effective, we have \(D^2\geq -1\).

  If \(D^2=-1\), then \(-K_T\cdot D=1\). Hence any member of \(|D|\) is irreducible. It is therefore a smooth rational \((-1)\)-curve. Moreover \(|D|\) has only one member, since two distinct members would have non-negative intersection, contradicting \(D^2=-1\). Hence \(h^0(T,\mathscr{O}_T(D))=1=D^2+2\).

  Assume now that \(D^2\geq 0\). Riemann--Roch and \(H^0(T,\mathscr{O}_T(K_T-D))=0\) give
  \[
    h^0(T,\mathscr{O}_T(D))\geq \chi(T,\mathscr{O}_T(D))=D^2+2\geq 2.
  \]
  Thus \(|D|\) has a non-trivial moving part. Write \(|D|=|M|+F\), where \(F\) is the fixed part. The moving part \(M\) has no fixed components, hence is nef. Therefore Kodaira vanishing gives \(H^1(T,\mathscr{O}_T(M))=0\). We claim that \(F=0\). Suppose otherwise, and let \(E\) be an irreducible component of \(F\). From the isomorphism \( H^0(T,\mathscr{O}_T(M))\cong H^0(T,\mathscr{O}_T(M+E)) \) and the exact sequence
  \[
    0\to \mathscr{O}_T(M)\to \mathscr{O}_T(M+E)\to \mathscr{O}_E(M+E)\to 0
  \]
  we deduce that \( H^0(E,\mathscr{O}_E(M+E))=0 \). This forces \(E^2<0\). Indeed, if \(E^2\geq 0\), then
  \[
    \deg\mathscr{O}_E(M+E)-\deg\omega_E=-K_T\cdot E+M\cdot E>0,
  \]
  so \(H^1(E,\mathscr{O}_E(M+E))=0\); Riemann--Roch on the integral Gorenstein curve \(E\) would then give
  \[
    h^0(E,\mathscr{O}_E(M+E))
    =
    M\cdot E+\frac{E^2-K_T\cdot E}{2}>0,
  \]
  a contradiction. Since \(-K_T\) is ample, adjunction now gives \(E^2=-1\) and \(E\cong \mathbb{P}^1\). Therefore \(H^0(E,\mathscr{O}_E(M+E))=0\) forces \(M\cdot E=0\). This holds for every irreducible component of \(F\), so \(M\cdot F=0\). The general member of \(|D|\) is therefore the disjoint union of a member of \(|M|\) and \(F\), contradicting the connectedness assumption. Hence \(F=0\), and \(D=M\) is nef. By~\cite[Cor.\@ 3.4]{Harbourne1985LinearSystemsRationalSurfaces}, the nef divisor \(D\) is base-point-free. Bertini's theorem then shows that the general member of \(|D|\) is smooth. It is connected by assumption and has arithmetic genus zero, hence it is a smooth rational curve. Finally, Kodaira vanishing applied to \(D-K_T\) gives \(H^1(T,\mathscr{O}_T(D))=0\), while \(H^0(T,\mathscr{O}_T(K_T-D))=0\). Riemann--Roch gives
  \[
    h^0(T,\mathscr{O}_T(D))=\chi(T,\mathscr{O}_T(D))=D^2+2. \qedhere
  \]
\end{proof}

%Easy to see this fails on K3s for instance, by choosing two smooth rational curves C_1, C_2 meeting at a point and then D=C_1+C_2, then h^0(D)=1 and only consists of C_1+C_2.

\begin{proposition}\label{prop:non-integral-members-delPezzo}
  Assume \(d\geq 2\). The irreducible components of the non-integral locus of \(|H|\) are the images of the addition morphisms
  \[ |f^\ast D_1|\times |f^\ast D_2|\to |H|, \quad (C_1, C_2)\mapsto C_1+C_2, \]
  where \(-K_T=D_1+D_2\) is a splitting of the anticanonical class \(-K_T\in \NS(T)\) into a sum of effective classes \(D_i\in \NS(T)\) whose general members are smooth rational curves with \(D_1\cdot D_2=2\). Every such component has codimension \(3\) in \(|H|\).
\end{proposition}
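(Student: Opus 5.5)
The plan is to reduce everything to divisor classes on \(T\), using that \(S\) is very general, and then to compute dimensions with Riemann--Roch on \(T\). Since \(\NS(S)=f^\ast\NS(T)\), any non-integral member \(C\in|H|\) can be written as \(C=C_1+C_2\), where \(C_1\) is an integral component and \(C_2\) is the (non-zero, effective) residual curve. Their classes are \(f^\ast D_1\) and \(f^\ast D_2\) for some \(D_i\in\NS(T)\) with \(D_1+D_2=-K_T\).

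\emph{Step 1: descend linear systems.} The branch divisor is \(B\in|-2K_T|\), so \(f_\ast\mathscr{O}_S\cong\mathscr{O}_T\oplus\mathscr{O}_T(K_T)\). By the projection formula,
\[
H^0(S,f^\ast D)=H^0(T,D)\oplus H^0(T,D+K_T).
\]
Suppose \(D_1+K_T=-D_2\) were effective. Then \(-K_T\cdot D_2\leq 0\). On the other hand \(f^\ast D_2\) is a non-zero effective class, so \(H\cdot f^\ast D_2=-2K_T\cdot D_2>0\), a contradiction. Hence \(H^0(T,D_1+K_T)=0\), and symmetrically for \(D_2\). Therefore \(|f^\ast D_i|=f^\ast|D_i|\), and in particular each \(D_i\) is effective on \(T\).

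\emph{Step 2: numerics of the splitting.} Adjunction on \(T\) gives
\[
p_a(D_1)+p_a(D_2)=p_a(-K_T)+1-D_1\cdot D_2=2-D_1\cdot D_2 .
\]
We also have \(D_1\cdot D_2=-K_T\cdot D_1-D_1^2=2-2p_a(D_1)\). Combining these shows \(p_a(D_1)=p_a(D_2)\eqqcolon p\).

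Next I would rule out \(p\geq 1\). Take \(D_1\) to be the class of the integral curve \(f(C_1)\) (or of \(f(C_1)\) with its reduced image, if \(C_1\) is \(\iota\)-invariant). Then \(D_1^2=-K_T\cdot D_1+2p-2\geq 1\), so the irreducible class \(D_1\) is nef and big. Now \(D_1\cdot D_2=2-2p\leq 0\) with \(D_2\) effective and non-zero. Nefness forces \(D_1\cdot D_2=0\), and the Hodge index theorem then gives \(D_2^2<0\). But the same formula gives \(D_2^2=-K_T\cdot D_2+2p-2\geq 1\), a contradiction. Hence \(p=0\) and \(D_1\cdot D_2=2\).

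\emph{Step 3: the components.} By Steps 1–2, every non-integral member lies in the image of \(|f^\ast D_1|\times|f^\ast D_2|\) for a splitting with \(p_a(D_i)=0\) and \(D_1\cdot D_2=2\). Conversely, every such splitting clearly gives non-integral members. To apply Lemma~\ref{lem:delPezzo-linear-systems}, I need the general members of \(|D_i|\) to be connected. For \(D_1\) this holds because it contains the integral curve \(f(C_1)\). For \(D_2\), I would argue that a disconnected general member means the image is contained in the image attached to a finer splitting. I would then only keep the splittings whose images are maximal, and check that those have connected \(D_i\).

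The lemma then gives \(h^0(D_i)=D_i^2+2\). The addition map is generically finite, since a reduced curve has finitely many decompositions. So the image has dimension
\[
D_1^2+D_2^2+2=(-K_T)^2-2D_1\cdot D_2+2=d-2 ,
\]
which is codimension \(3\) in \(|H|\cong\mathbb{P}^{d+1}\). Finally, these images are irreducible closed subsets of the same dimension, none contained in another (distinct splittings give distinct generic decompositions). So they are exactly the irreducible components of the non-integral locus.

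The main obstacle I expect is the bookkeeping in Step 3. There I must ensure that splittings with disconnected general members, or members \(C\) with more than two components, contribute nothing beyond the listed images. A clean option is to always take \(C_1\) integral with \(f(C_1)\) having connected linear system, and to note that any further degeneration of \(C_2\) lies in the closure of a generic image.
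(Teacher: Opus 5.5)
Steps 1 and 2 are fine. Step 2 even differs usefully from the paper: the paper gets \(D_1\cdot D_2\geq 1\) from connectedness of anticanonical members and then uses parity, while you exclude \(p\geq 1\) via nefness of the integral class \(D_1\) and Hodge index, without needing anything about the residual. Two small points. First, ``hence \(p=0\)'' also needs \(p\geq 0\), which holds because \(p=p_a(\Gamma_1)\) for the integral curve \(\Gamma_1\) with \(C_1=f^\ast\Gamma_1\). Second, the parenthetical about \(\iota\)-invariance is superfluous, since Step 1 already gives \(C_1=f^\ast\Gamma_1\).

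The genuine gap is in Step 3: you never show that the general member of \(|D_2|\) is connected, and this is not mere bookkeeping. Without it, Lemma~\ref{lem:delPezzo-linear-systems} does not apply to \(D_2\). You then have neither that the general member of \(|D_2|\) is a smooth rational curve, which the statement asserts, nor \(h^0(\mathscr{O}_T(D_2))=D_2^2+2\), which your codimension count uses. Nor do you know that every non-integral member lies in an image attached to an admissible splitting, which is what identifies these images as the components. Your plan (``a disconnected member lies in the image of a finer splitting; keep the maximal images and check that those are connected'') is not an argument, and its final ``check'' is exactly the missing content. The paper avoids the issue by choosing the integral component well. It writes \(C=f^\ast\Gamma\) with \(\Gamma=\sum\Gamma_i\in|-K_T|\) connected, and for \(k\geq 3\) it relabels so that the residual \(E=\sum_{i\geq 2}D_i\) has connected general member. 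It then reruns the numerics for the splitting \(D_1+E\).

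Alternatively, your own Hodge-index technique shows directly that every member of \(|D_2|\) is connected. Suppose \(\Gamma_2=A+B\) with \(A,B\neq 0\) effective and of disjoint supports. Then \(A\cdot B=0\) and \(p_a(A)+p_a(B)=p_a(D_2)+1=1\), so we may assume \(p_a(A)\geq 1\). This gives \(A^2\geq -K_T\cdot A>0\). Now \(R\coloneqq -K_T-A=D_1+B\) is non-zero effective with \(A\cdot R=2-2p_a(A)\leq 0\), hence \(R^2=-K_T\cdot R-A\cdot R>0\). But two effective classes of positive square lie in the same component of the positive cone, so \(A\cdot R>0\), a contradiction. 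With this, both \(D_i\) satisfy the hypotheses of the lemma, and the rest of your Step 3 goes through; it even shows that any integral component \(C_1\) of \(C\) yields an admissible splitting.
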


\begin{proof}
  Let \(C\coloneqq \sum_{i=1}^k C_i \in |H|\) be a reducible curve, where each \(C_i\) is integral, so \(k\geq 2\). Since \(\NS(S)=f^\ast \NS(T)\), we have \(C_i\in |f^\ast D_i|\) for some effective classes \(D_i\in \NS(T)\) with \(-K_T=\sum_{i=1}^k D_i\). Moreover, for any such effective summand \(D_i\) of \(-K_T\), pull-back of sections induces an isomorphism
  \begin{equation}\label{eq:pull-back-sections-delPezzo}
    f^\ast \colon H^0(T, \mathscr{O}_T(D_i))\iso H^0(S, \mathscr{O}_S(f^\ast D_i)),
  \end{equation}
  since \(H^0(T, \mathscr{O}_T(D_i+K_T))=0\) as \(D_i+K_T\) is anti-effective. In particular, the general member of each \(|D_i|\) is integral.

  If \(k=2\), then \(C\) belongs to the image of the addition morphism \(|f^\ast D_1|\times |f^\ast D_2|\to |H|\). By adjunction,
  \[
    p_a(D_1)=p_a(D_2)=1-\frac{1}{2}D_1\cdot D_2.
  \]
  Since the general member of \(|D_i|\) is connected, \(p_a(D_i)\geq 0\). Also \(D_1\cdot D_2 \geq 1\), since \(-K_T\) is ample; as \(D_1\cdot D_2\) is even, \(D_1\cdot D_2\geq 2\). Together these inequalities give \(D_1\cdot D_2=2\) and \(p_a(D_i)=0\) for \(i=1,2\), so the claim follows from Lemma~\ref{lem:delPezzo-linear-systems}.

  If \(k\geq 3\), since \(-K_T\) is ample, after relabelling we may assume that \(E\coloneqq \sum_{i\geq 2} D_i\) has connected general member and that \(D_1\cdot E\geq 1\). The same argument gives \(D_1\cdot E=2\) and \(p_a(E)=p_a(D_1)=0\), and the general member of \(|E|\) is integral by Lemma~\ref{lem:delPezzo-linear-systems}. Hence \(C\) belongs to the image of the addition morphism \(|f^\ast D_1|\times |f^\ast E|\to |H|\).

  Finally, since \(|f^\ast D_1|\times |f^\ast D_2|\to |H|\) is finite, we can compute the codimension of its image using~\eqref{eq:pull-back-sections-delPezzo} and Lemma~\ref{lem:delPezzo-linear-systems}:
  \begin{align*}
    \begin{split}
      \dim |H|- (\dim |f^\ast D_1| + \dim |f^\ast D_2|) & = (K_T^2+1)-(D_1^2+D_2^2+2)    \\
                                                        & = 2 D_1\cdot D_2-1=3. \qedhere
    \end{split}
  \end{align*}
\end{proof}

We now describe the minimal strictly polystable strata of \(M_{\vv}(S,H)\), with respect to the stratification by polystable type from Lemma~\ref{lem:stratification-polystable-type}.

\begin{proposition}\label{prop:polystable-strata-delPezzo}
  Assume \(d\geq 2\). The minimal non-empty strata in \(M_{\vv}(S,H)\setminus M^s_{\vv}(S,H)\) are those of type
  \begin{equation}\label{eq:minimal-polystable-type-delPezzo}
    \mathfrak{t}\coloneqq (1, \vv_1; 1, \vv_2)
  \end{equation}
  where \(\vv_i\coloneqq (0, f^\ast D_i, D_i\cdot K_T)\in \widetilde H_{\mathrm{alg}}(S,\mathbb{Z})\), with \(D_i\in \NS(T)\) as in Proposition~\ref{prop:non-integral-members-delPezzo}. For any such type \(\mathfrak{t}\), one has
  \begin{equation*}
    \mathrm{codim}(M_{\vv}^\mathfrak{t}(S,H), M_{\vv}(S,H))=6.
  \end{equation*}
\end{proposition}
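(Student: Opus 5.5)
The plan has three steps: classify polystable types, identify the minimal ones among them, and compute the dimension of their strata.

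\emph{Classifying polystable types.} Let \([\mathscr{F}]\in M_{\vv}(S,H)\) be strictly polystable of type \((n_1,\mathbf{w}_1;\dots;n_k,\mathbf{w}_k)\). Then the Fitting support \(\pi([\mathscr{F}])\) is a non-integral member of \(|H|\). Each stable summand \(\mathscr{F}_i\) is a pure one-dimensional sheaf whose support is a subcurve of this member. By Proposition~\ref{prop:non-integral-members-delPezzo}, and because \(\NS(S)=f^\ast\NS(T)\), we can write \(c_1(\mathscr{F}_i)=f^\ast D'_i\) for some effective \(D'_i\), with \(\sum n_iD'_i=-K_T\). The Euler characteristic is then pinned down by \(p_H(\mathbf{w}_i)=p_H(\vv)\). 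With \(\vv=(0,H,-d)\), the reduced Hilbert polynomial gives slope \(\chi/(H\cdot c_1)=-d/2d=-1/2\). Hence
\[
  \chi(\mathscr{F}_i)=-\tfrac12 H\cdot f^\ast D'_i=-(-K_T\cdot D'_i)=K_T\cdot D'_i,
\]
so \(\mathbf{w}_i=(0,f^\ast D'_i,K_T\cdot D'_i)\), matching the form of \(\vv_i\) in~\eqref{eq:minimal-polystable-type-delPezzo}.

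\emph{Identifying the minimal strata.} Under the partial order of \textsection\ref{subsec:stratification-polystable-type}, a stratum is minimal among strictly polystable ones exactly when its type cannot be obtained from another strictly polystable type by a successor move. The coarsest strictly polystable types are those with two summands of multiplicity one, namely \((1,\mathbf{w}_1;1,\mathbf{w}_2)\) with \(\mathbf{w}_1+\mathbf{w}_2=\vv\). Every strictly polystable type is reached from such a type by a sequence of moves: group its summands into two blocks (or merge a multiplicity, as in the case \(n=2\)). So it remains to show which two-summand types actually occur with non-empty stable loci. For a splitting \(-K_T=D_1+D_2\) as in Proposition~\ref{prop:non-integral-members-delPezzo}, the general members \(f^\ast D_i\) are integral (irreducible double covers of \(\mathbb{P}^1\)). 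Line bundles of the prescribed degree on them are stable, so \(M^s_{\vv_i}(S,H)\neq\varnothing\) and the stratum is non-empty.

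Conversely, suppose a two-summand type has a non-integral support class \(D'_i\). Then its stable summands live in a stratum lying in the closure of a stratum obtained by further regrouping. I expect this to reduce to one of the types above, using the same regrouping as in the \(k\geq 3\) step of Proposition~\ref{prop:non-integral-members-delPezzo}, which finds a splitting \(D_1+E\) with connected, integral general members. This converse, together with verifying that non-emptiness of the stable loci forces the \(D_i\) to be of the stated kind, is the step I expect to be the main obstacle. My first approach is to check that \(\mathbf{w}_i^2\geq -2\) and that the support of a stable sheaf is connected. Together with the genus computation from Proposition~\ref{prop:non-integral-members-delPezzo}, this should force \(D_1\cdot D_2=2\).

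\emph{Computing the codimension.} The stratum \(M^{\mathfrak{t}}_{\vv}(S,H)\) is the finite image of an open subset of \(M_{\vv_1}(S,H)\times M_{\vv_2}(S,H)\). Its dimension is therefore \(\vv_1^2+2+\vv_2^2+2\). Since \(\vv_i^2=(f^\ast D_i)^2=2D_i^2\), this equals \(2D_1^2+2D_2^2+4\). On the other hand, \(\dim M_{\vv}(S,H)=\vv^2+2=2d+2\). Using
\[
  d=(D_1+D_2)^2=D_1^2+D_2^2+4,
\]
the codimension is
\[
  2d+2-2(d-4)-4=6.
\]
This is consistent with Proposition~\ref{prop:non-integral-members-delPezzo}: the locus of supports has codimension \(3\), and the fibre dimension drops by \(3\), since the Jacobians of the two components together have dimension \(g(C_1)+g(C_2)=g(C)-1\) and the choice of gluing data is lost.
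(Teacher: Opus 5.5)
Your outline follows the paper's proof. Both pin down $\chi_i=D_i\cdot K_T$ by equality of reduced Hilbert polynomials, take the two-summand types $(1,\vv_1;1,\vv_2)$ as the minimal strictly polystable types, and get non-emptiness from the integrality of the general members of $|f^\ast D_i|$. Your dimension count via $\dim M^s_{\vv_i}(S,H)=\vv_i^2+2$ is a valid substitute for the paper's count through the fibrations $M_{\vv_i}(S,H)\to|f^\ast D_i|$, and it gives $6$. There is one slip in your closing consistency check. Since $C_1\cdot C_2=2D_1\cdot D_2=4$, you get $g(C_1)+g(C_2)=g(C)-3$, not $g(C)-1$; this is exactly the drop of $3$ in fibre dimension.

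The genuine gap is the converse you flag yourself. You must show that a two-summand type with $\vv_1+\vv_2=\vv$ has empty stratum unless $-K_T=D_1+D_2$ is as in Proposition~\ref{prop:non-integral-members-delPezzo}. Regrouping cannot help here: the only coarsening of a two-summand type is $(1,\vv)$. So a non-admissible two-summand stratum, if non-empty, would itself be an extra minimal stratum, and you must prove it is empty.

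Your proposed tools ($\vv_i^2\geq-2$, connected support, and the genus formula $p_a(D_i)=1-\tfrac12 D_1\cdot D_2$) do not achieve this. The genus argument of Proposition~\ref{prop:non-integral-members-delPezzo} needs the \emph{general} member of $|D_i|$ to be connected. The Fitting support of a stable sheaf is only \emph{some} connected member, and such a member can be non-reduced with negative arithmetic genus. For example, on $\mathbb{P}^1\times\mathbb{P}^1$ take $D_i=2f_i$:
\begin{itemize}
\item $\vv_i=(0,2f^\ast f_i,-4)$ has $\vv_i^2=0\geq-2$;
\item the double elliptic fibres $2f^\ast F$, $F\in|f_i|$, are connected members of $|2f^\ast f_i|$;
\item yet $D_1\cdot D_2=4$ and $p_a(D_i)=-1$.
\end{itemize}

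What is needed is an emptiness statement for some $M^s_{\vv_i}(S,H)$, and that requires sheaf-theoretic input. One way to close the gap is a nef/non-nef dichotomy for each $D_i$.

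\emph{$D_i$ nef.} Then $D_i$ is base-point-free with smooth general member. If that member is disconnected, $D_i$ is not big, so $D_i=mF$ with $m\geq2$ and $F$ a conic class. Then $\vv_i$ is a multiple of the primitive isotropic vector $(0,f^\ast F,-2)$. Such a vector carries no $H$-stable sheaves: stability is open in the polarisation, and Theorem~\ref{thm:global-geometry-Mv-delPezzo}(i) applies at nearby generic polarisations.

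\emph{$D_i$ not nef.} Pick a $(-1)$-curve $e$ with $D_i\cdot e<0$, and assume $D_i\neq e$. The sheaf $A=\mathscr{O}_{f^\ast e}(-2)$ is stable and spherical, and has the same reduced Hilbert polynomial as $\vv$. For every $\mathscr{F}$ with $\vv(\mathscr{F})=\vv_i$ one has $\chi(A,\mathscr{F})=-2\,e\cdot D_i>0$. Hence $\Hom(A,\mathscr{F})$ or $\Hom(\mathscr{F},A)$ is non-zero, and $\mathscr{F}$ cannot be stable.

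This second mechanism is genuinely needed. On $\mathrm{Bl}_2\mathbb{P}^2$, the splitting $(h+e_1)+(2h-2e_1-e_2)$ has $D_1\cdot D_2=4$, and neither summand is a multiple of a conic class.

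With the dichotomy, a non-empty stratum forces each $D_i$ to be either a $(-1)$-curve or a nef class with smooth connected general member. Then $p_a(D_i)\geq0$, and the genus formula gives $D_1\cdot D_2=2$ as in Proposition~\ref{prop:non-integral-members-delPezzo}. The same emptiness input is also what your regrouping step for types with three or more summands requires. Such a stratum lies in the closure of a two-summand stratum only if the coarsened Mukai vector actually carries stable sheaves.
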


\begin{proof}
  Any minimal type strictly above the stable type \((1, \vv)\) must be of the form~\eqref{eq:minimal-polystable-type-delPezzo}, with \(\vv_1+\vv_2=\vv\). Since \(\NS(S)=f^\ast \NS(T)\), we can write \(\vv_i\coloneqq (0, f^\ast D_i, \chi_i)\) for suitable effective classes \(D_i\in \NS(T)\) with \(D_1+D_2=-K_T\) and \(\chi_i \in \mathbb{Z}\). The equality of reduced Hilbert polynomials forces \(\chi_i=D_i\cdot K_T\) for \(i=1,2\). The result then follows from Proposition~\ref{prop:non-integral-members-delPezzo} and the fact that each \(M^s_{\vv_i}(S,H)\) is non-empty since the general member of $|f^\ast D_i|$ is integral.

  Finally, \(M_{\vv}^\mathfrak{t}(S,H)\) is an open subset of the image of the finite morphism~\eqref{eq:direct-sum-morphism}, and each \(M_{\vv_i}(S,H)\) admits a Lagrangian fibration onto \(|f^\ast D_i|\). The codimension statement therefore follows from Proposition~\ref{prop:non-integral-members-delPezzo}.
\end{proof}

\begin{remark}[Explicit admissible splittings of \(-K_T\) and associated walls] \label{rmk:explicit-minimal-polystable-strata-delPezzo}
  The minimal polystable strata in \(M_{\vv}(S,H)\setminus M^s_{\vv}(S,H)\) can be described explicitly by listing the minimal \emph{admissible} splittings of \(-K_T\in \NS(T)\), namely those inducing non-empty strata in \(M_\vv(S,H)\) as in Proposition~\ref{prop:non-integral-members-delPezzo}, and the walls in \(\Amp(S)\) they define.

  For an admissible splitting \(-K_T=D_1+D_2\in \NS(T)\), set
  \begin{equation}\label{eq:wall-class} 
    \delta_{D_1, D_2}\coloneqq (D_1\cdot K_T) f^\ast D_2 - (D_2\cdot K_T) f^\ast D_1\in \NS(S).
  \end{equation}
  When this class is non-zero, the equation \(\delta_{D_1,D_2}\cdot\alpha=0\) defines a \(\vv\)-wall in \(\Amp(S)\)~\cite[\textsection2.1]{PeregoRapagnetta2023ISVModuliSheaves}. This happens precisely when \(D_1\) and \(D_2\) are not proportional after normalising by their anticanonical degrees, or equivalently when the corresponding Mukai vectors are not proportional---for instance, whenever \(\vv\in \widetilde H_{\mathrm{alg}}(S,\mathbb{Z})\) is primitive. 

  \smallskip
  For \(T\cong\mathbb{P}^2\), the only admissible minimal splitting is \(-K_T=(2h)+(h)\), where \(h\in \NS(T)\) is the class of a line. Hence there is a unique irreducible component of the singular locus of codimension 6. This splitting has \(\delta_{2h,h}=0\), so it does not define a wall; the singularities of \(M_\vv(S,H)\) are structural and due to the non-primitivity of \(\vv\). See also Figure~\ref{fig:singular-locus-P2} for the full stratification.

  % Hasse diagram of the stratification by polystable type 
\begin{figure}
  \centering
  \begin{tikzcd}[row sep=1.8em]
    {(1, 3\vv_0)} \\
    {(1, 2\vv_0; 1, \vv_0)} \\
    {(1, \vv_0; 1, \vv_0; 1,\vv_0)} \\
    {(2, \vv_0; 1, \vv_0)} \\
    {(3,\vv_0)}
    \arrow["{\scriptscriptstyle 6}", from=2-1, to=1-1]
    \arrow["{\scriptscriptstyle 2}", from=3-1, to=2-1]
    \arrow["{\scriptscriptstyle 4}", from=4-1, to=3-1]
    \arrow["{\scriptscriptstyle 4}", from=5-1, to=4-1]
  \end{tikzcd}
  \caption{\emph{Polystable-type Hasse diagram for \(M_{\vv}(S,H)\), with \(T\cong\mathbb{P}^2\)}. Here \(\vv=3\vv_0\), with \(\vv_0\coloneqq (0, f^\ast h, -3)\). Nodes are polystable types; an upward arrow labelled \(k\) means that the lower stratum is contained in the closure of the upper one with codimension \(k\). Note that \(M_{\vv_0}(S,H)\) is a smooth hyper-K\"ahler fourfold of \(\mathrm{K3}^{[2]}\)-type.}
  \label{fig:singular-locus-P2}
\end{figure}
  
  \smallskip
  For \(T\cong\mathbb{P}^1\times \mathbb{P}^1\), writing \(\NS(T)=\mathbb{Z}f_1\oplus \mathbb{Z}f_2\), where \(f_1,f_2\in \NS(T)\) are the classes of the fibres, the only admissible minimal splittings of \(-K_T\) are given by
  \begin{equation*}
    -K_T=(f_1+f_2)+(f_1+f_2), \quad (f_1)+(f_1+2f_2), \quad (2f_1+f_2)+(f_2).
  \end{equation*}
  The case \(D_1\coloneqq 2f_1\) and \(D_2 \coloneqq 2f_2\) is not admissible, since the general members of \(|2f_1|\) and \(|2f_2|\) are not connected. Hence there are three irreducible components of the singular locus of codimension 6 in this case. The first splitting has zero wall class and comes from the non-primitivity of \(\vv\), while the last two splittings define the same non-zero wall through \(H\). See also Remark~\ref{rmk:degree-8-comparison} and Figure~\ref{fig:singular-locus-P1xP1} for the full stratification.
  
  \smallskip
  Finally, let \(T\cong\mathrm{Bl}_{r-1}(\mathbb{P}^2)\) for \(2\leq r\leq 9\), and write
  \begin{equation*}
    \NS(T)=\mathbb{Z}h\oplus \bigoplus_{i=1}^{r-1} \mathbb{Z}e_i,
  \end{equation*}
  where \(h\in \NS(T)\) is the pull-back of the hyperplane class of \(\mathbb{P}^2\) and the \(e_i\in \NS(T)\) are the exceptional classes.
  A simple coefficient count, together with the usual general-position conditions for del Pezzo blow-ups~\cite[\textsection8.1.3]{Dolgachev2012ClassicalAlgebraicGeometry}, shows that the admissible minimal splittings of the anticanonical class
  \[ -K_T=3h- \sum_{i=1}^{r-1} e_i \in \NS(T)\]
  fall into the following two types:
  \begin{itemize}
    \item Type I: \(D_1\coloneqq e_i\) and \(D_2\coloneqq 3h-2e_i-\sum_{j\neq i} e_j\), for some \(i\in \{1, \dots, r-1\}\);
    \item Type II: \(D_1\coloneqq h-\sum_{i\in I} e_i\) and \(D_2\coloneqq 2h-\sum_{j\notin I} e_j\), where \(I\subset \{1, \dots, r-1\}\) has cardinality \(\max\{0,r-6\}\leq |I|\leq 2\).
  \end{itemize}
  Here \(|I|\leq 2\) and \(r-1-|I|\leq 5\) are precisely the conditions that the corresponding line and conic systems have connected general members.
  For \(r=9\), the type I residual class \(D_2\) satisfies \(-K_T\cdot D_2=0\); since \(-K_T\) is ample, it is not effective. Thus no type I splitting occurs in degree \(1\), which is consistent with the smoothness of \(M_\vv(S,H)\). Hence the number of strata of type I is \(r-1\) for \(2\leq r\leq 8\) and \(0\) for \(r=9\), while the number of strata of type II is
  \[
    \sum_{m=\max\{0,r-6\}}^2 \binom{r-1}{m}.
  \]
  The total number of irreducible components of the singular locus of \(M_\vv(S,H)\) is the sum of the two contributions, as summarised in the following table:
  \begin{center}
    \begin{tabular}{c|c|c|c|c}
      \(r\) & \(d=K_T^2\) & Type I & Type II & Total \\
      \hline
      2     & 8           & 1      & 2       & 3     \\
      3     & 7           & 2      & 4       & 6     \\
      4     & 6           & 3      & 7       & 10    \\
      5     & 5           & 4      & 11      & 15    \\
      6     & 4           & 5      & 16      & 21    \\
      7     & 3           & 6      & 21      & 27    \\
      8     & 2           & 7      & 21      & 28    \\
      9     & 1           & 0      & 0       & 0
    \end{tabular}
  \end{center}
  In the case \(r=8\), so \(d=2\), this recovers the 28 isolated singular points which appear in~\cite[Prop.\@ 1.2]{MarkushevichTikhomirov2007Prymian}, while for \(r=7\), so \(d=3\), this is consistent with the 27 K3 surfaces described by~\cite[Lemma 3.1]{Matteini2016SingularPrymFibration}.

  \smallskip
  In the blow-up cases with \(2\leq r\leq 8\), all these splittings give non-trivial walls through \(H\).
  For a splitting of type I, this gives \(\delta_i\coloneqq d f^\ast e_i-H\), up to sign. A direct computation shows that these wall classes are linearly independent. Since they are orthogonal to \(H\), they form a basis of the real hyperplane
  \[
    H^\perp_{\mathbb{R}}\coloneqq \{\alpha\in \NS(S)_{\mathbb{R}}\mid \alpha\cdot H=0\}\subset \NS(S)_{\mathbb{R}}.
  \]
  In particular, the wall classes coming from splittings of type II belong to the same subspace \(H^\perp_{\mathbb{R}}\), and hence are linear combinations of the type I classes. The common intersection of all such walls is the ray \(\mathbb{R}_{\geq 0}H\), which is then a codimension-\((r-1)\) face of the local \(\vv\)-wall configuration in \(\Amp(S)\).
\end{remark}

The next proposition records the global properties of the Beauville--Mukai systems \(M_{\vv}(S,H)\) used below: their position among the classes of symplectic varieties from Definition~\ref{def:zoo-symplectic-varieties}, their numerical invariants \(b_2\) and \(\rho\), and the structure of their singular locus.

\begin{proposition}
  \label{prop:geometry-Mv-delPezzo}
  \begin{enumerate}
    \item For \(d=1\) the moduli space \(M_{\vv}(S,H)\) is a smooth hyper-K\"ahler fourfold of \(\mathrm{K3}^{[2]}\)-type with \(b_2(M_{\vv}(S,H))=23\) and \(\rho(M_{\vv}(S,H))=10\).
    \item For \(2\leq d\leq 8\), the moduli space \(M_{\vv}(S,H)\) is an irreducible symplectic variety of dimension \(2d+2\). It is not \(\mathbb{Q}\)-factorial, and it has simply connected regular locus and
    \[ 
      b_2(M_{\vv}(S,H))=14+d, \quad \rho(M_{\vv}(S,H))=2.
    \]
    Its singular locus is pure of dimension \(2d-4\), and its irreducible components are parametrised by the minimal splittings of the Mukai vector \(\vv \in \widetilde H_{\mathrm{alg}}(S,\mathbb{Z})\) from Proposition~\ref{prop:polystable-strata-delPezzo}.
    \item For \(d=9\), the moduli space \(M_{\vv}(S,H)\) is a locally factorial irreducible symplectic variety of dimension \(20\) with simply connected regular locus and
    \[ 
      b_2(M_{\vv}(S,H))=23, \quad \rho(M_{\vv}(S,H))=2.
    \]
    Its singular locus is irreducible of dimension \(14\). 
  \end{enumerate}
\end{proposition}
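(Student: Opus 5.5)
We treat the three ranges separately. The cases \(d=1\) and \(d=9\) are \(\vv\)-generic and follow from Theorem~\ref{thm:global-geometry-Mv-delPezzo} and Theorem~\ref{thm:2nd-cohomology-moduli-sheaves}. The range \(2\leq d\leq 8\) is reduced to a generic adjacent chamber by a chamber-to-wall contraction.

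\emph{Case \(d=1\).} Every member of \(|H|\) is integral, so \(H\) is \(\vv\)-generic and \(\vv\) is primitive. Since \(\vv^2=H^2=2\), Theorem~\ref{thm:global-geometry-Mv-delPezzo}(ii) gives a hyper-K\"ahler manifold of \(\mathrm{K3}^{[2]}\)-type. Theorem~\ref{thm:2nd-cohomology-moduli-sheaves} then gives \(b_2=23\) and \(\rho=\rho(S)+1=r+1=10\).

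\emph{Case \(d=9\).} Here \(T\cong\mathbb{P}^2\) and \(\rho(S)=1\), so there are no walls. We have \(\vv=3\vv_0\) with \(\vv_0=(0,f^\ast h,-3)\) and \(\vv_0^2=2\). Theorem~\ref{thm:global-geometry-Mv-delPezzo}(iv) with \(m=3\) gives local factoriality, irreducibility and simply connected regular locus. Theorem~\ref{thm:2nd-cohomology-moduli-sheaves} gives \(b_2=23\) and \(\rho=2\). By Remark~\ref{rmk:explicit-minimal-polystable-strata-delPezzo}, the singular locus is the closure of the single minimal stratum of type \((1,2\vv_0;1,\vv_0)\). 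This stratum is the image of the irreducible variety \(M_{2\vv_0}\times M_{\vv_0}\) and has codimension \(6\) by Proposition~\ref{prop:polystable-strata-delPezzo}, so the singular locus is irreducible of dimension \(14\).

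\emph{Case \(2\leq d\leq 8\).} Pick \(H'\in\Amp(S)\) in a chamber adjacent to the face containing \(\mathbb{R}_{\geq0}H\), close to \(H\). Then \(H'\)-semistability implies \(H\)-semistability, which gives a chamber-to-wall morphism \(\phi\colon M':=M_\vv(S,H')\to M_\vv(S,H)\) via Bayer--Macr\`i~\cite{BayerMacri2014MMPModuliSheaves}.
\begin{itemize}
\item For \(r\geq3\), \(\vv\) is primitive and \(M'\) is of \(\mathrm{K3}^{[d+1]}\)-type.
\item For \(T\cong\mathbb{P}^1\times\mathbb{P}^1\), \(\vv=2\vv_0\) with \(\vv_0^2=4\), so case (iv) applies.
\end{itemize}
The map \(\phi\) is an isomorphism over sheaves with integral support, hence birational. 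It is a contraction onto the normal variety \(M_\vv(S,H)\). Proposition~\ref{prop:criteria-ISV}(ii) then makes \(M_\vv(S,H)\) cohomologically irreducible.

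For simple connectivity of the regular locus, the preimage of \(M_\vv(S,H)^{\mathrm{reg}}\) is an open subset of \(M'^{\mathrm{reg}}\). Its complement in \(M'^{\mathrm{reg}}\) has codimension \(\geq2\): the exceptional loci \(\mathbb{P}\Ext^1(\mathscr{F}_1,\mathscr{F}_2)\)-bundles over codimension-\(6\) strata have positive codimension at least \(2\). Hence it is simply connected, and it surjects on \(\pi_1\) onto \(M_\vv(S,H)^{\mathrm{reg}}\). Proposition~\ref{prop:criteria-ISV}(iii) then gives irreducibility. The singular-locus statement follows from Proposition~\ref{prop:polystable-strata-delPezzo}: every non-stable stratum lies in the closure of a minimal one, and each minimal stratum has dimension \(2d+2-6=2d-4\).

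\emph{Second cohomology.} Since \(M_\vv(S,H)\) has rational singularities and \(\phi\) is a contraction, \(\phi^\ast\) identifies \(H^2(M_\vv(S,H),\mathbb{Q})\) with the classes in \(H^2(M',\mathbb{Q})\cong\vv^\perp_{\mathbb{Q}}\) that are trivial on \(\phi\)-contracted curves. This is the standard Kollár--Mori/Bakker--Lehn argument, and integrally we use torsion-freeness. The contracted curves are lines in \(\mathbb{P}\Ext^1(\mathscr{F}_1,\mathscr{F}_2)\) over the wall splittings. By Example~\ref{ex:Mukai-Lepotier-extensions}, \(\lambda_\vv(\mathbf{w})\) has degree \(\mathbf{w}\cdot\vv_2\) on such a line. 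Hence
\[H^2(M_\vv(S,H),\mathbb{Q})\cong\{\mathbf{w}\in\vv^\perp_{\mathbb{Q}}\mid \mathbf{w}\cdot\vv_2=0 \text{ for all wall splittings}\}.\]
By Remark~\ref{rmk:explicit-minimal-polystable-strata-delPezzo}, the classes \(\vv_2\) modulo \(\vv\) span a space of dimension \(r-1\), since the type I wall classes form a basis of \(H^\perp_{\mathbb{R}}\). The structural \(P^1\times P^1\) splitting with zero wall class is not contracted and contributes no condition. This gives
\[b_2=23-(r-1)=14+d,\qquad \rho=(r+1)-(r-1)=2.\]

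\emph{Non-\(\mathbb{Q}\)-factoriality.} We argue that \(\phi\) is small, since its exceptional loci have codimension at least \(2\). Then \(\Cl(M_\vv(S,H))_{\mathbb{Q}}\cong\Pic(M')_{\mathbb{Q}}\) has rank \(r+1>2=\rho\), so \(M_\vv(S,H)\) is not \(\mathbb{Q}\)-factorial.

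The main obstacle is making the identification of \(H^2\) with the orthogonal complement rigorous and exact. This requires checking that the contracted curve classes are precisely the extension lines, that all wall splittings are realised on the chosen side, and that smallness holds so that both the \(b_2\) and the class-group counts go through.
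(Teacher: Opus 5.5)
Your overall strategy is the paper's. The cases \(d=1\) and \(d=9\) go through Theorems~\ref{thm:global-geometry-Mv-delPezzo} and~\ref{thm:2nd-cohomology-moduli-sheaves}. For \(2\le d\le 8\) you use a crepant chamber-to-wall contraction \(\phi\colon M'\coloneqq M_\vv(S,H')\to M_\vv(S,H)\), Proposition~\ref{prop:criteria-ISV}, a codimension argument for \(\pi_1(M^s_\vv(S,H))\), and Example~\ref{ex:Mukai-Lepotier-extensions} on extension lines. One small slip: \(T\cong\mathrm{Bl}_1\mathbb{P}^2\) has \(r=2\), and there \(\vv=2\vv_1+3\vv_2\) is also primitive, so your first bullet should cover all blow-up cases \(2\le r\le 8\).

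\textbf{Main gap: exactness of \(b_2\) and \(\rho\) for \(2\le d\le 8\).} Pulled-back classes have degree zero on the contracted extension lines. So Example~\ref{ex:Mukai-Lepotier-extensions} gives only the inclusion ``\(\subseteq\)'' in your displayed description of \(H^2(M_\vv(S,H),\mathbb{Q})\), hence only \(b_2\le 14+d\) and \(\rho\le 2\). The reverse inclusion needs every \(\phi\)-contracted curve class to lie in the span of those lines, i.e.\ \(\rho(M'/M_\vv(S,H))\le r-1\). Your sentence ``the contracted curves are lines in \(\mathbb{P}\Ext^1(\mathscr{F}_1,\mathscr{F}_2)\)'' asserts this without proof. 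As stated it also ignores the fibres over the deeper strata, which are not projective spaces of extensions of two stable sheaves. You flag this yourself as the unresolved obstacle.

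The missing idea is the trivial lower bound \(\rho(M_\vv(S,H))\ge 2\). It comes from an ample class together with the nef, non-big class \(\pi^\ast\mathscr{O}(1)\) of the Lagrangian fibration. Your type I lines give \(\rho(M'/M_\vv(S,H))\ge r-1\), since the pairing matrix \(2+2d\delta_{ij}\) is non-degenerate. Combined with \(\rho(M')=r+1=\rho(M_\vv(S,H))+\rho(M'/M_\vv(S,H))\), both inequalities become equalities. Injectivity of \(\phi^\ast\) with algebraic cokernel then gives \(b_2=23-(r-1)=14+d\). This is exactly the paper's argument. It also makes your non-\(\mathbb{Q}\)-factoriality count rigorous, since that count uses \(\rho=2\).

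\textbf{Secondary gap: smallness over deeper strata.} Your smallness and codimension-\(\ge 2\) claims only examine the \(\mathbb{P}^3\)-bundles over general points of the codimension-\(6\) strata. Over deeper strata the fibres of \(\phi\) can be larger. You need semismallness of the crepant contraction~\cite[Prop.\@ 2.16]{Tighe2025LooijengaLuntsVerbitskyAlgebra}, which gives \(\operatorname{codim}\phi^{-1}(Z)\ge\frac12\operatorname{codim}Z\ge 3\) for every singular stratum \(Z\). Only then do both your simple-connectivity argument and the identification \(\Cl(M_\vv(S,H))_{\mathbb{Q}}\cong\Pic(M')_{\mathbb{Q}}\) go through.
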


\begin{proof}
  For \(d=1\), this follows from the fact that every member of \(|H|\) is integral, so that \(M_{\vv}(S,H)=M^s_{\vv}(S,H)\) is smooth, and from Theorems~\ref{thm:global-geometry-Mv-delPezzo} and~\ref{thm:2nd-cohomology-moduli-sheaves}. In fact, since \(\chi=-1\), there are no \(\vv\)-walls in \(\Amp(S)\).  

  \smallskip
  Assume first that \(d=8\) and \(T\cong \mathbb{P}^1\times \mathbb{P}^1\), so that \(\NS(T)=\mathbb{Z}f_1\oplus \mathbb{Z}f_2\), where \(f_1\) and \(f_2\) are the classes of the fibres. The splittings
  \[ -K_T= (f_1)+(f_1+2f_2), \quad (2f_1+f_2)+(f_2) \]
  both induce the same flopping wall \(\mathbb{R}_{\geq 0}H\) in \(\Amp(S)\). For a polarisation \(H^+\) off the wall, \cite[Prop.\@ 2.5]{ArbarelloSacca2018K3QuiverSingularities} gives a crepant birational chamber-to-wall morphism
  \[ \phi\colon M_\vv(S, H^+)\to M_{\vv}(S,H). \]
  The variety \(M_\vv(S,H)\) is hence cohomologically irreducible symplectic by Proposition~\ref{prop:criteria-ISV}. Moreover, by semismallness~\cite[Prop.\@ 2.16]{Tighe2025LooijengaLuntsVerbitskyAlgebra}, $\phi$ is a small terminalisation, as it is an isomorphism over an open subset whose complement has codimension 6. It follows that \(M_\vv(S,H)\) is not \(\mathbb{Q}\)-factorial~\cite[Cor.\@ 2.63]{KollarMori1998BirationalGeometry}.  Moreover, Theorem~\ref{thm:global-geometry-Mv-delPezzo} identifies the regular locus of \(M_\vv(S,H^+)\) with the stable locus \(M^s_\vv(S,H^+)\) and shows that it is simply connected. Since the open immersion 
  \[ \phi^{-1}(M^s_\vv(S, H))\subset M^s_\vv(S, H^+) \]
  has complement of codimension \(\geq 3\), the regular locus \(M^{\mathrm{reg}}_\vv(S,H)=M^s_\vv(S,H)\) is also simply connected. Hence \(M_{\vv}(S,H)\) is an irreducible symplectic variety by Proposition~\ref{prop:criteria-ISV}. 
  We have \(\rho(M_\vv(S,H))\geq 2\), since \(\Pic(M_\vv(S,H))\) contains an ample class and the class of the Lagrangian fibration \(\pi\colon M_{\vv}(S,H)\to |H|\). On the other hand, Theorem~\ref{thm:2nd-cohomology-moduli-sheaves} gives \(\rho(M_\vv(S,H^+))=3\), so the small contraction \(\phi\) has relative Picard number
  \[  \rho(M_\vv(S,H^+)/M_\vv(S,H))=1.\]
  The morphism \(\phi^\ast\colon H^2(M_\vv(S, H), \mathbb{Z})\to H^2(M_\vv(S, H^+), \mathbb{Z})\) is injective and induces an isomorphism on transcendental lattices~\cite[Lemma 2.1]{BakkerLehn2021GlobalTorelliSingularSymplectic}. Its cokernel is therefore algebraic~\cite[Lemma 5.21]{BakkerLehn2022GlobalModuli}, and after tensoring with \(\mathbb{Q}\) it has dimension
  \[
    \rho(M_\vv(S,H^+))-\rho(M_\vv(S,H))=\rho(M_\vv(S,H^+)/M_\vv(S,H))=1.
  \]
  Therefore Theorem~\ref{thm:2nd-cohomology-moduli-sheaves} gives
  \[
    b_2(M_{\vv}(S,H))=b_2(M_{\vv}(S,H^+))-1=22.
  \]
  The last statement follows from Proposition~\ref{prop:polystable-strata-delPezzo} and Remark~\ref{rmk:explicit-minimal-polystable-strata-delPezzo}.

  \smallskip
  It remains to treat the case \(T\cong\mathrm{Bl}_{r-1} \mathbb{P}^2\), where \(2\leq r\leq 8\) and \(d=10-r\). Here \(H\) lies on a codimension-\((r-1)\) face of the \(\vv\)-wall configuration in \(\Amp(S)\), but \(\vv\) is primitive. As above, let \(H^+\) be a \(\vv\)-generic polarisation in an adjacent chamber with small crepant resolution \(\phi\colon M_\vv(S, H^+)\to M_\vv(S,H)\). Since Theorem~\ref{thm:2nd-cohomology-moduli-sheaves} gives \(\rho(M_\vv(S, H^+))=r+1\), by the same argument as above it remains to show that 
  \begin{equation}\label{eq:relative-Picard-number-blowups}
    \rho(M_\vv(S,H^+)/M_\vv(S,H))\geq r-1. 
  \end{equation}
  To that end, we exhibit \(r-1\) linearly independent curve classes in the relative space \(N_1(M_\vv(S,H^+)/M_\vv(S,H))\). For each component of type I of the singular locus of \(M_\vv(S,H)\) described in Remark~\ref{rmk:explicit-minimal-polystable-strata-delPezzo}, pick a general point \(\mathscr{F}_i\oplus \mathscr{G}_i\) in the corresponding stratum, with 
  \[ \vv(\mathscr{F}_i)=(0, f^\ast e_i, -1), \quad \vv(\mathscr{G}_i)=(0, f^\ast (-K_T-e_i), 1-d).  \]
  We may choose \(H^+\) so that, for each \(i\), the fibre of \(\phi\) over \(\mathscr{F}_i\oplus \mathscr{G}_i\) is the \(3\)-dimensional projective space of extensions \(\mathbb{P}\Ext^1(\mathscr{F}_i, \mathscr{G}_i)\subset M_\vv(S, H^+)\). Choose a line \(\ell_i\subset \mathbb{P}\Ext^1(\mathscr{F}_i, \mathscr{G}_i)\) for each \(i=1, \dots, r-1\). We claim that the classes of the curves \(\ell_i\) are linearly independent in \(N_1(M_\vv(S,H^+)/M_\vv(S,H))\). To see this, let \(\delta_i\coloneqq d f^\ast e_i-H\in \NS(S)\) be the classes~\eqref{eq:wall-class} defining the walls of type I. The classes
  \[ \mathbf{w}_i\coloneqq (0, \delta_i, 0)\in \vv^\perp, \]
  which are linearly independent over \(\mathbb{Q}\), define \(r-1\) line bundles \(\lambda_\vv(\mathbf{w}_i)\in \NS(M_\vv(S, H^+))\) via the Mukai morphism of Theorem~\ref{thm:2nd-cohomology-moduli-sheaves}. Using Example~\ref{ex:Mukai-Lepotier-extensions}, we compute the pairings
  \[ \lambda_\vv(\mathbf{w}_i)\cdot \ell_j=-\mathbf{w}_i\cdot  \vv(\mathscr{F}_j)=-\delta_i\cdot  f^\ast e_j= 2+2d \delta_{ij}, \]
  and the intersection matrix is non-degenerate. This proves the claim. As before,
  \[
    \rho(M_\vv(S, H))\geq 2,
  \]
  so \eqref{eq:relative-Picard-number-blowups} is an equality. Hence
  \[
    \rho(M_\vv(S,H))=2,\quad b_2(M_\vv(S,H))=b_2(M_\vv(S, H^+))-(r-1)=14+d.
  \]

  \smallskip
  Finally, assume \(d=9\), so \(T\cong \mathbb{P}^2\) and \(\NS(S)=\mathbb{Z}f^\ast h\), where \(h\in \NS(T)\) is the class of a line. We have \(H=3f^\ast h\) and \(\vv=3\vv_0\), where \(\vv_0\coloneqq (0, f^\ast h, -3)\in \widetilde H_{\mathrm{alg}}(S,\mathbb{Z})\) is primitive. Since \(H\) is \(\vv\)-generic, Theorem~\ref{thm:global-geometry-Mv-delPezzo} shows that \(M_{\vv}(S,H)\) is a locally factorial irreducible symplectic variety with simply connected regular locus and terminal singularities. By Theorem~\ref{thm:2nd-cohomology-moduli-sheaves} we have \(b_2(M_{\vv}(S,H))=23\) and \(\rho(M_{\vv}(S,H))=2\). The singular locus is described in Proposition~\ref{prop:polystable-strata-delPezzo} and Remark~\ref{rmk:explicit-minimal-polystable-strata-delPezzo}.
\end{proof}

The proof of Proposition~\ref{prop:geometry-Mv-delPezzo} also gives an integral identification of transcendental lattices
\begin{equation}\label{eq:transcendental-lattice-Mv-delPezzo}
  T(S)\iso T(M_\vv(S,H)).
\end{equation}
For \(d=1\) and \(d=9\), in the absence of $\vv$-walls, this is the restriction of the Mukai morphism of Theorem~\ref{thm:2nd-cohomology-moduli-sheaves}. For \(2\leq d\leq 8\), it is obtained---upon choosing a nearby $\vv$-generic polarisation $H^+$---by composing the Mukai isomorphism
\[
  \lambda_\vv^+\colon T(S)\iso T(M_\vv(S,H^+))
\]
with the inverse of the isomorphism
\[
  \phi^\ast \colon T(M_\vv(S, H))\iso T(M_\vv(S,H^+))
\]
from~\cite[Lemma 2.1]{BakkerLehn2021GlobalTorelliSingularSymplectic}.

% Hasse diagram of the stratification by polystable type Bl_1 P2
\begin{figure}
  \centering
  {\small
  \makebox[\textwidth][c]{%
  \begin{tikzcd}[ampersand replacement=\&]
	\& {(1, 2\vv_1+3\vv_2)} \& \\
	{(1, \vv_1;1, \vv_1+3\vv_2)} \& {(1, 2\vv_1+2\vv_2;1,\vv_2)} \& {(1, \vv_1+\vv_2;1,\vv_1+2\vv_2)} \\
	\& {(1, \vv_1; 1,\vv_1+2\vv_2;1,\vv_2)} \& {(1,\vv_1+\vv_2;1,\vv_1+\vv_2;1,\vv_2)} \\
	\& {(1,\vv_1;1, \vv_1+\vv_2;1,\vv_2;1,\vv_2)} \\
	{(2,\vv_1;1,\vv_2; 1,\vv_2;1,\vv_2)} \& {(1,\vv_1; 1,\vv_1+\vv_2; 2,\vv_2)} \& {(2,\vv_1+\vv_2; 1,\vv_2)} \\
	\& {(2,\vv_1; 2,\vv_2; 1,\vv_2)} \\
	\& {(2, \vv_1; 3, \vv_2)}
	\arrow["{{\scriptscriptstyle 6}}", from=2-1, to=1-2]
	\arrow["{{\scriptscriptstyle 6}}"', from=2-2, to=1-2]
	\arrow["{{\scriptscriptstyle 6}}"', from=2-3, to=1-2]
	\arrow["{{\scriptscriptstyle 2}}", from=3-2, to=2-1]
	\arrow["{{\scriptscriptstyle 2}}"', from=3-2, to=2-2]
	\arrow["{{\scriptscriptstyle 2}}"{near end}, from=3-2, to=2-3]
	\arrow["{{\scriptscriptstyle 2}}"'{near end}, from=3-3, to=2-2]
	\arrow["{{\scriptscriptstyle 2}}"', from=3-3, to=2-3]
	\arrow["{{\scriptscriptstyle 2}}", from=4-2, to=3-2]
	\arrow["{{\scriptscriptstyle 2}}"', from=4-2, to=3-3]
	\arrow["{{\scriptscriptstyle 2}}", from=5-1, to=4-2]
	\arrow["{{\scriptscriptstyle 2}}", from=5-2, to=4-2]
	\arrow["{{\scriptscriptstyle 4}}", from=5-3, to=3-3]
	\arrow["{{\scriptscriptstyle 2}}", from=6-2, to=5-1]
	\arrow["{{\scriptscriptstyle 2}}", from=6-2, to=5-2]
	\arrow["{{\scriptscriptstyle 2}}"', from=6-2, to=5-3]
	\arrow["{{\scriptscriptstyle 2}}"', from=7-2, to=6-2]
\end{tikzcd}}}
  \caption{\emph{Polystable-type Hasse diagram for \(M_{\vv}(S,H)\), with \(T\cong\mathrm{Bl}_1\mathbb{P}^2\)}. Here \(\vv=2\vv_1+3\vv_2\) as in~\eqref{eq:wall-classes-d8-blowup}. Nodes are polystable types; an upward arrow labelled \(k\) means that the lower stratum is contained in the closure of the upper one with codimension \(k\).}
  \label{fig:singular-locus-Bl1P2}
\end{figure}

\begin{remark}[Wall and structural singularities for \(d=8\)]\label{rmk:degree-8-comparison} The two degree-eight cases illustrate two different sources of singularities. First let \(T\cong\mathrm{Bl}_1\mathbb{P}^2\), so that \(\NS(T)=\mathbb{Z}h\oplus \mathbb{Z}e\). The effective cone of \(T\) is generated by \(e\) and \(h-e\), and \(-K_T=2e+3(h-e)\), as in Remark~\ref{rmk:explicit-minimal-polystable-strata-delPezzo}. Set
\begin{equation}\label{eq:wall-classes-d8-blowup}
  \vv_1\coloneqq (0,f^\ast e,-1),\qquad \vv_2\coloneqq (0,f^\ast(h-e),-2)\in \widetilde H_{\mathrm{alg}}(S,\mathbb{Z}),
\end{equation}
so that \( \vv=(0,H,-8)=2\vv_1+3\vv_2 \in \widetilde H_{\mathrm{alg}}(S,\mathbb{Z})\). Both classes \(\vv_1\) and \(\vv_2\) are admissible, in the sense that the associated moduli spaces have non-empty stable loci. Indeed, \(\vv_1^2=-2\), so \(M_{\vv_1}(S,H)\) consists of a single point, corresponding to a unique stable sheaf, while \(\vv_2^2=0\), so \(M_{\vv_2}(S,H)\) is a smooth K3 surface. 
In this case \(\vv=2\vv_1+3\vv_2\in \widetilde H_{\mathrm{alg}}(S,\mathbb{Z})\) is primitive, and the three irreducible components of the singular locus correspond to the three minimal types
\[
  (1, \vv_1; 1, \vv_1+3\vv_2), \quad (1, 2\vv_1+2\vv_2; 1, \vv_2), \quad (1, \vv_1+\vv_2; 1, \vv_1+2\vv_2).
\]
These are all genuine wall strata: the associated wall classes~\eqref{eq:wall-class} are non-zero. Thus, after moving the polarisation to a \(\vv\)-generic chamber, these components disappear, and the chamber-to-wall morphism is a small crepant resolution of \(M_\vv(S,H)\).

% Hasse diagram of the stratification by polystable type P1xP1
\begin{figure}
  \centering
  \makebox[\textwidth][c]{%
  \begin{tikzcd}[ampersand replacement=\&]
	\&\& {(1, 2\vv_1+2\vv_2)} \&\& \\
	\& {(1, 2\vv_1+\vv_2; 1, \vv_2)} \& {(1, \vv_1; 1, \vv_1+2\vv_2)} \& {(1, \vv_1+\vv_2; 1, \vv_1+\vv_2)} \\
	\&\& {(1, \vv_1+\vv_2; 1, \vv_1; 1, \vv_2)} \\
	{\phantom{X+Y+Z+W}} \&\& {(1, \vv_1; 1, \vv_1; 1, \vv_2; 1, \vv_2)} \&\& {\phantom{X+Y+Z+W}} \\
	\& {(1, \vv_1; 1, \vv_1; 2, \vv_2)} \& {(2, \vv_1; 1, \vv_2; 1, \vv_2)} \& {(2, \vv_1+\vv_2)} \\
	\&\& {(2, \vv_1; 2, \vv_2)}
	\arrow["{{\scriptscriptstyle 6}}", from=2-2, to=1-3]
	\arrow["{{\scriptscriptstyle 6}}", from=2-3, to=1-3]
	\arrow["{{\scriptscriptstyle 6}}"', from=2-4, to=1-3]
	\arrow["{{\scriptscriptstyle 2}}", from=3-3, to=2-2]
	\arrow["{{\scriptscriptstyle 2}}", from=3-3, to=2-3]
	\arrow["{{\scriptscriptstyle 2}}"', from=3-3, to=2-4]
	\arrow["{{\scriptscriptstyle 2}}", from=4-3, to=3-3]
	\arrow["{{\scriptscriptstyle 2}}", from=5-2, to=4-3]
	\arrow["{{\scriptscriptstyle 2}}"', from=5-3, to=4-3]
	\arrow["{{\scriptscriptstyle 6}}", from=5-4, to=2-4]
	\arrow["{{\scriptscriptstyle 2}}", from=6-3, to=5-2]
	\arrow["{{\scriptscriptstyle 2}}"', from=6-3, to=5-3]
	\arrow["{{\scriptscriptstyle 2}}"', from=6-3, to=5-4]
\end{tikzcd}}
  \caption{\emph{Polystable-type Hasse diagram for \(M_{\vv}(S,H)\), with \(T\cong\mathbb{P}^1\times \mathbb{P}^1\)}. Here \(\vv=2\vv_1+2\vv_2\) as in~\eqref{eq:wall-classes-d8-quadric}. Nodes are polystable types; an upward arrow labelled \(k\) means that the lower stratum is contained in the closure of the upper one with codimension \(k\).}
  \label{fig:singular-locus-P1xP1}
\end{figure}

\smallskip
For \(T\cong\mathbb{P}^1\times \mathbb{P}^1\), we have \(\NS(T)=\mathbb{Z}f_1\oplus \mathbb{Z}f_2\) as in Remark~\ref{rmk:explicit-minimal-polystable-strata-delPezzo}. Set
\begin{equation}\label{eq:wall-classes-d8-quadric}
  \vv_1\coloneqq (0,f^\ast f_1,-2),\qquad \vv_2\coloneqq (0,f^\ast f_2,-2)\in \widetilde H_{\mathrm{alg}}(S,\mathbb{Z}),
\end{equation}
so that \( \vv=(0, H, -8)=2\vv_1+2\vv_2=2(\vv_1+\vv_2) \in \widetilde H_{\mathrm{alg}}(S,\mathbb{Z})\) is not primitive.
Here \(\vv_1^2=\vv_2^2=0\), so both \(M_{\vv_1}(S,H)\) and \(M_{\vv_2}(S,H)\) are smooth K3 surfaces. The three irreducible components of the singular locus correspond to the minimal types
\[
  (1,\vv_1+\vv_2;1,\vv_1+\vv_2),\quad
  (1,\vv_1;1,\vv_1+2\vv_2),\quad
  (1,2\vv_1+\vv_2;1,\vv_2).
\]
The first component is structural: it is the \emph{generic} non-primitive singular locus, its wall class is zero, and it survives for every polarisation in \(\Amp(S)\). The other two components are genuine wall strata, and their wall classes define the same wall through \(H\). Thus the chamber-to-wall morphism resolves only the wall components and is a \emph{singular} terminalisation of \(M_\vv(S,H)\). This distinguishes the two cases in terms of their singularities. The Hasse diagrams associated with the polystable-type stratifications, which are intrinsic, provide another comparison; see \textsection\ref{subsec:stratification-polystable-type} and Figures~\ref{fig:singular-locus-Bl1P2} and~\ref{fig:singular-locus-P1xP1}.
\end{remark}

\section{The Prym involution on Beauville--Mukai systems}\label{sec:prym-involution-delPezzo}

This section introduces the Prym symplectic involution on the Beauville--Mukai system \(M_\vv(S,H)\) by composing the covering involution with a twisted derived duality. Over the invariant hyperplane in the support linear system \(f^\ast |-K_T|\subset |H|\), this is the usual Prym involution on the Jacobians of double covers of smooth genus-one anticanonical divisors in \(|-K_T|\). We describe its fixed locus, including the relative compactified Prym component of codimension \(2\) of~\cite{BrakkeeEtAl2026RelativePrym}, and prove that, for \(d\geq 2\), the induced action on second cohomology is trivial.

\subsection{The Prym symplectic involution}
The anti-symplectic covering involution \(\iota\) of \(f\colon S\to T\) defines, by pull-back, an involution of \(\Db(S)\), which we denote again by \(\iota\):
\begin{equation}
  \label{eq:iota-Db-delPezzo}
  \iota\colon \Db(S)\to \Db(S), \quad  \mathscr{F}^\bullet \mapsto \iota^\ast \mathscr{F}^\bullet.
\end{equation}
We also use the following contravariant involution, a twisted version of the derived dual:
\begin{equation}
  \label{eq:duality-Db-delPezzo}
  \mathbb{D}\colon \Db(S) \to \Db(S), \quad \mathscr{F}^\bullet \mapsto \mathbf{R}\mathcal{H}om(\mathscr{F}^\bullet, \mathscr{O}_S(-H))[1].
\end{equation}
The twist \(\mathscr{O}_S(-H)\) and the shift are chosen so that \(\mathbb{D}\) acts on \(M_{\vv}(S,H)\).

The polarisation \(H=-f^\ast K_T\in \NS(S)\) is \(\iota\)-equivariant, so \(\iota\) acts naturally on the linear system \(|H|\cong \mathbb{P}^{d+1}\).

\begin{lemma}\label{lem:iota-action-linear-system-delPezzo}
  The fixed locus of \(\iota\) on \(|H|\) consists of a hyperplane
  \begin{equation*}
    f^\ast |-K_T|\coloneqq \mathbb{P} f^\ast H^0(T, \mathscr{O}_T(-K_T))\cong \mathbb{P}^d
  \end{equation*}
  of \(\iota\)-invariant curves, and an isolated point corresponding to the smooth ramification curve \(R\subset S\). The locus of non-integral curves in \(|H|\) described in Proposition~\ref{prop:non-integral-members-delPezzo} is contained in the fixed hyperplane \(f^\ast |-K_T|\).
\end{lemma}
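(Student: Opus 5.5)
The plan is to compute the action of \(\iota\) on \(H^0(S,\mathscr{O}_S(H))\) and read off its eigenspaces. The double cover \(f\colon S\to T\) branched along \(B\in|-2K_T|\) gives
\[
f_\ast\mathscr{O}_S\cong \mathscr{O}_T\oplus\mathscr{O}_T(K_T),
\]
with \(\iota\) acting by \(+1\) on the first summand and by \(-1\) on the second. The projection formula then yields a decomposition into \(\iota\)-eigenspaces:
\[
H^0(S,\mathscr{O}_S(H))\cong H^0(T,\mathscr{O}_T(-K_T))\oplus H^0(T,\mathscr{O}_T).
\]
The \(+1\)-eigenspace is \(f^\ast H^0(T,\mathscr{O}_T(-K_T))\), of dimension \(d+1\) by Riemann--Roch and Kodaira vanishing on the del Pezzo surface. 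The \(-1\)-eigenspace is one-dimensional and spanned by the section \(s_R\) vanishing on the ramification curve \(R\). Here one uses that \(\mathscr{O}_S(R)\cong f^\ast\mathscr{O}_T(-K_T)\), since \(2R=f^\ast B\) and \(\NS(S)=f^\ast\NS(T)\) is torsion-free. Since \(s_R\) is anti-invariant, it cannot lie in the invariant summand, and the dimensions add up to \(d+2=h^0(\mathscr{O}_S(H))\).

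The linearization ambiguity (\(\pm1\)) does not affect the projective action. The fixed points of an involution on \(\mathbb{P}(V_+\oplus V_-)\) are exactly \(\mathbb{P}V_+\sqcup\mathbb{P}V_-\). Here this gives the hyperplane \(f^\ast|-K_T|\cong\mathbb{P}^d\) together with the isolated point \([s_R]=R\). The curve \(R\cong B\) is smooth, as noted in the discussion of right DPN pairs.

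For the last claim, I will use Proposition~\ref{prop:non-integral-members-delPezzo}. Every non-integral member has the form \(C_1+C_2\) with \(C_i\in|f^\ast D_i|\). By the isomorphism~\eqref{eq:pull-back-sections-delPezzo}, each \(C_i=f^\ast E_i\) for some \(E_i\in|D_i|\). Hence \(C_1+C_2=f^\ast(E_1+E_2)\) lies in \(f^\ast|-K_T|\).

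The only point needing some care is identifying the anti-invariant line with \(R\). I will handle it via the equality \(\mathscr{O}_S(R)=\mathscr{O}_S(H)\) together with the eigenspace decomposition. The rest is routine.
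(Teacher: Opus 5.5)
Your proof is correct and follows the same route as the paper: split \(H^0(S,\mathscr{O}_S(H))\) into the \(\iota\)-eigenspaces \(f^\ast H^0(T,\mathscr{O}_T(-K_T))\oplus\langle s_R\rangle\), read off the fixed locus on \(\mathbb{P}(V_+\oplus V_-)\), and use the pull-back isomorphism on sections of the summands \(D_i\) to place the non-integral members in \(f^\ast|-K_T|\). The only difference is that you justify the eigenspace decomposition via \(f_\ast\mathscr{O}_S\cong\mathscr{O}_T\oplus\mathscr{O}_T(K_T)\) and the projection formula, whereas the paper simply asserts it; note that for \(\mathscr{O}_S(R)\cong\mathscr{O}_S(H)\) what you actually need is that \(\Pic(S)\) is torsion-free, which holds for any K3 surface.
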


\begin{proof}
  For the first claim, decompose \(H^0(S, \mathscr{O}_S(H))\) into eigenspaces for the \(\iota\)-action
  \[ H^0(S, \mathscr{O}_S(H)) = f^\ast H^0(T, \mathscr{O}_T(-K_T)) \oplus \langle s_R \rangle, \]
  where \(s_R\in H^0(S, \mathscr{O}_S(H))\) is a section cutting out the ramification divisor \(R\), on which \(\iota\) acts as \(-1\). The second claim follows from Proposition~\ref{prop:non-integral-members-delPezzo} and the isomorphism
  \[ f^\ast \colon H^0(T, \mathscr{O}_T(D_i)) \iso H^0(S, \mathscr{O}_S(f^\ast D_i)) \]
  for any effective summand \(D_i\) of \(-K_T\) as in~\eqref{eq:pull-back-sections-delPezzo}.
\end{proof}

We now turn to the induced actions on Beauville--Mukai systems. The cohomological action of \(\mathbb{D}\) on the Mukai lattice is
\[
  \mathbb{D}(\mathbf{w})= - \mathbf{w}^\vee \cdot e^{-H} \in \widetilde H(S,\mathbb{Z}).
\]
In particular, for a Mukai vector \(\mathbf{w}\coloneqq (0,f^\ast D,\chi)\) with \(D\in \NS(T)\), we have
\[
  \mathbb{D}(\mathbf{w})=(0,f^\ast D,-\chi-f^\ast D\cdot H)=(0,f^\ast D,-\chi+2D\cdot K_T).
\]

The equivalence \(\mathbb{D}\) sends pure one-dimensional sheaves on \(S\) to pure one-dimensional sheaves~\cite[Prop.\@ 1.1.6]{HuybrechtsLehn2010GeometryModuliSheaves}. For such sheaves, it turns ordinary Gieseker stability into twisted Gieseker stability in the sense of~\cite[Def.\@ 3.2]{MatsukiWentworth1997MumfordThaddeusPrinciple}, because of the twist \(\mathscr{O}_S(-H)\). In some cases, after fixing the numerical class, this twisted stability condition can be rewritten as ordinary Gieseker stability for another polarisation:

\begin{lemma}\label{lem:duality-stability-reflection-delPezzo}
  Let \(D\in \NS(T)\) be a non-zero effective class, and \(\chi\in \mathbb{Z}\). Set \(\mathbf{w}\coloneqq (0,f^\ast D,\chi)\in \widetilde H_{\mathrm{alg}}(S,\mathbb{Z})\). For any ample class \(\alpha\in \Amp(S)\), the equivalence \(\mathbb{D}\) sends \(\alpha\)-semistable sheaves of Mukai vector \(\mathbf{w}\) to \(-H\)-twisted \(\alpha\)-semistable sheaves of Mukai vector \(\mathbb{D}(\mathbf{w})\), and vice versa.

  Assume moreover that \(\chi+f^\ast D\cdot H\neq 0\), and set
  \begin{equation}\label{eq:polarisation-transform-general}
    \beta\coloneqq \frac{\chi\alpha+(f^\ast D\cdot \alpha)H}{\chi+f^\ast D\cdot H}\in \NS(S)_{\mathbb{R}}.
  \end{equation}
  If \(\beta \in \Amp(S)\) is ample, then \(-H\)-twisted \(\alpha\)-semistability for sheaves of Mukai vector \(\mathbb{D}(\mathbf{w})\) is equivalent to ordinary \(\beta\)-semistability. Hence \(\mathbb{D}\) induces an isomorphism of good moduli spaces
  \[
    M_{\mathbf{w}}(S,\alpha)\iso M_{\mathbb{D}(\mathbf{w})}(S,\beta).
  \]
  The inverse is induced by \(\mathbb{D}\) again. For \(\alpha=H\), one has \(\beta=H\), and hence \(\beta\) is ample for all \(\alpha\) in a sufficiently small neighbourhood of \(H\).
\end{lemma}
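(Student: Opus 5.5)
The plan is to reduce everything to the behaviour of \(\mathbb{D}\) on pure one-dimensional sheaves and then to a linear-algebra identity between the two slope inequalities. First, for a pure sheaf \(\mathscr{F}\) with one-dimensional support, \(\mathcal{E}xt^q(\mathscr{F},\omega_S)=0\) for \(q\neq 1\) (codimension one and purity, \cite[Prop.\@ 1.1.6]{HuybrechtsLehn2010GeometryModuliSheaves}). Hence
\[
  \mathbb{D}(\mathscr{F})\cong \mathscr{F}^{D}\otimes\mathscr{O}_S(-H),
\]
where \(\mathscr{F}^D\coloneqq\mathcal{E}xt^1(\mathscr{F},\omega_S)\) is again pure with the same support. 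Moreover \(\mathbb{D}\circ\mathbb{D}\cong\mathrm{id}\), and \(\mathbb{D}\) is exact on pure one-dimensional sheaves. It therefore reverses short exact sequences and sets up an inclusion-reversing bijection between saturated subsheaves of \(\mathbb{D}(\mathscr{F})\) and pure quotients of \(\mathscr{F}\). On Mukai vectors, \(\vv(\mathscr{Q})=(0,c_Q,\chi_Q)\) goes to \((0,c_Q,-\chi_Q-c_Q\cdot H)\), by the displayed formula for \(\mathbb{D}(\mathbf{w})\).

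Next I would compare the stability conditions. For one-dimensional sheaves, \(\alpha\)-Gieseker (semi)stability is slope (semi)stability with slope \(\chi/(c\cdot\alpha)\). Let \(\mathscr{G}'\subset\mathscr{G}\coloneqq\mathbb{D}(\mathscr{F})\) be a subsheaf with \(\mathscr{G}'=\mathbb{D}(\mathscr{Q})\) for a quotient \(\mathscr{F}\to\mathscr{Q}\). The inequality \(\chi_Q/(c_Q\cdot\alpha)\geq\chi/(c\cdot\alpha)\) is equivalent to
\[
  \frac{\chi(\mathscr{G}')+c'\cdot H}{c'\cdot\alpha}\leq\frac{\chi(\mathscr{G})+c\cdot H}{c\cdot\alpha},
\]
which is exactly \(-H\)-twisted \(\alpha\)-semistability in the sense of \cite{MatsukiWentworth1997MumfordThaddeusPrinciple}. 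The argument is symmetric, which proves the first assertion.

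For the second assertion, set \(c\coloneqq f^\ast D\) and \(k\coloneqq -\chi/(c\cdot\alpha)\), so that the twisted condition on \(\mathscr{G}'\) reads \(\chi'\leq c'\cdot(k\alpha-H)\). From the definition of \(\beta\) in \eqref{eq:polarisation-transform-general} one checks directly that \(c\cdot\beta=c\cdot\alpha\). Using \(\chi(\mathscr{G})=-\chi-c\cdot H\), one also gets the identity
\[
  k\alpha-H=\frac{\chi(\mathscr{G})}{c\cdot\alpha}\,\beta=\frac{\chi(\mathscr{G})}{c\cdot\beta}\,\beta .
\]
Substituting, the twisted inequality becomes \(\chi'/(c'\cdot\beta)\leq\chi(\mathscr{G})/(c\cdot\beta)\), provided \(c'\cdot\beta>0\). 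This holds precisely because \(\beta\) is assumed ample. So no sign issue arises, and twisted \(\alpha\)-semistability coincides with ordinary \(\beta\)-semistability. The same holds for the strict versions, so polystable objects correspond as well. For \(\alpha=H\) one computes
\[
  \beta=\frac{(\chi+c\cdot H)H}{\chi+c\cdot H}=H,
\]
and since ampleness is an open condition and \(\beta\) depends continuously on \(\alpha\) (the denominator is nonzero), \(\beta\) stays ample near \(H\).

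Finally I would globalise to moduli spaces. For a flat family \(\mathcal{F}\) of pure one-dimensional sheaves on \(S\times B\), the relative sheaf \(\mathcal{E}xt^1_{S\times B/B}(\mathcal{F},\omega\boxtimes\mathscr{O}_B)\otimes\mathscr{O}(-H)\) is flat and commutes with base change, because the other relative Ext sheaves vanish fibrewise. Hence \(\mathbb{D}\) defines a morphism of moduli stacks. By the previous step it carries \(\alpha\)-semistable objects of vector \(\mathbf{w}\) to \(\beta\)-semistable objects of vector \(\mathbb{D}(\mathbf{w})\). Because it reverses Jordan--Hölder filtrations, it preserves S-equivalence. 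It therefore descends to the good moduli spaces, with inverse induced by \(\mathbb{D}\) because \(\mathbb{D}^2\cong\mathrm{id}\). I expect the main obstacle to be only bookkeeping: making the sign conventions for twisted stability match \cite{MatsukiWentworth1997MumfordThaddeusPrinciple}, and verifying base change for the relative \(\mathcal{E}xt^1\) so that \(\mathbb{D}\) really acts in families. The slope identity above is the conceptual heart of the argument.
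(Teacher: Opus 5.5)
Your proposal is correct and follows essentially the same route as the paper. Both arguments use \(\mathbb{D}\) to trade pure quotients of \(\mathscr{F}\) for saturated subsheaves of \(\mathbb{D}(\mathscr{F})\), observe that the two slope inequalities coincide, and then substitute \(\beta\) directly to identify twisted \(\alpha\)-semistability with ordinary \(\beta\)-semistability. Your identity \(k\alpha-H=\frac{\chi(\mathscr{G})}{c\cdot\beta}\,\beta\) is just a tidier packaging of the paper's substitution, and your remarks on relative \(\mathcal{E}xt^1\), base change and S-equivalence fill in what the paper compresses into the single appeal to \(\mathbb{D}^2\simeq\mathrm{id}\).
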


\begin{proof}
  Let \(\mathscr{F}\) have Mukai vector \(\mathbf{w}\), and let \(\mathscr{F}\to \mathscr{G}\) be a pure one-dimensional quotient. The \(\alpha\)-semistability inequality for this quotient is
  \[
    \frac{\chi(\mathscr{G})}{c_1(\mathscr{G})\cdot\alpha}\geq \frac{\chi}{f^\ast D\cdot\alpha}.
  \]
  Since \(\mathbb{D}\) turns pure one-dimensional quotients into pure one-dimensional subsheaves, we have \(\mathbb{D}(\mathscr{G})\subset \mathbb{D}(\mathscr{F})\). The \(-H\)-twisted \(\alpha\)-semistability inequality for the subsheaf \(\mathbb{D}(\mathscr{G})\subset \mathbb{D}(\mathscr{F})\) is
  \[
    \frac{-\chi(\mathscr{G})}{c_1(\mathscr{G})\cdot\alpha}\leq \frac{-\chi}{f^\ast D\cdot\alpha},
  \]
  which is exactly the original \(\alpha\)-semistability inequality for \(\mathscr{F}\to \mathscr{G}\).

  Assume now that \(\chi+f^\ast D\cdot H\neq 0\), and let \(\beta\) in~\eqref{eq:polarisation-transform-general} be ample. Let \(\mathscr{F}\) have Mukai vector \(\mathbb{D}(\mathbf{w})=(0,f^\ast D,-\chi-f^\ast D\cdot H)\), and let \(\mathscr{G}\subset\mathscr{F}\) be a pure one-dimensional subsheaf. The \(\beta\)-semistability inequality for \(\mathscr{G}\subset\mathscr{F}\) is
  \[
    \frac{\chi(\mathscr{G})}{c_1(\mathscr{G})\cdot\beta}\leq \frac{-\chi-f^\ast D\cdot H}{f^\ast D\cdot\beta}.
  \]
  A straightforward substitution of~\eqref{eq:polarisation-transform-general} shows that this is equivalent to
  \[
    (f^\ast D\cdot\alpha)\chi(\mathscr{G})+\chi(c_1(\mathscr{G})\cdot\alpha)+(f^\ast D\cdot\alpha)(c_1(\mathscr{G})\cdot H)\leq 0,
  \]
  or equivalently
  \[
    (f^\ast D\cdot\alpha)(\chi(\mathscr{G})+c_1(\mathscr{G})\cdot H)+\chi(c_1(\mathscr{G})\cdot\alpha)\leq 0,
  \]
  which is precisely the \(-H\)-twisted \(\alpha\)-semistability inequality for \(\mathscr{G}\subset\mathscr{F}\). This proves the claimed equivalence of the two stability notions for sheaves of Mukai vector \(\mathbb{D}(\mathbf{w})\). Since \(\mathbb{D}^2\simeq \mathrm{id}\) on pure one-dimensional sheaves, this gives the claimed isomorphism of moduli stacks, and hence of good moduli spaces. The last statement follows from the fact that \(\beta\) depends continuously on \(\alpha\) and that \(\beta=H\) for \(\alpha=H\).
\end{proof}

We now restrict to Mukai vectors \(\mathbf{w}\coloneqq (0,f^\ast D,\chi)\in \widetilde{H}_\mathrm{alg}(S, \mathbb{Z})\) which are fixed by \(\mathbb{D}\). By the formula for \(\mathbb{D}(\mathbf{w})\), this is equivalent to \(\chi=D\cdot K_T\). In this case, the class \(\beta\) from~\eqref{eq:polarisation-transform-general} is
\[
  2\frac{f^\ast D\cdot \alpha}{f^\ast D\cdot H}H-\alpha.
\]
For the Mukai vector \(\vv\) of our Beauville--Mukai system, this takes the following simple form.

\begin{corollary}\label{cor:duality-reflection-delPezzo}
  For the Mukai vector \(\vv\), the transformation \(\alpha\mapsto\beta\) obtained from~\eqref{eq:polarisation-transform-general} is
  \begin{equation}\label{eq:polarisation-reflection-v}
    \beta=2\frac{\alpha\cdot H}{H^2}H-\alpha=\frac{\alpha\cdot H}{d}H-\alpha.
  \end{equation}
  It is the orthogonal reflection fixing \(\mathbb{R}H\) and acting as \(-1\) on \(H^\perp_{\mathbb{R}}\). Whenever \(\beta\) is ample, \(\mathbb{D}\) induces an isomorphism
  \[
    M_{\vv}(S,\alpha)\iso M_{\vv}(S,\beta).
  \]
\end{corollary}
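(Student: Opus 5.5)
The plan is to specialise Lemma~\ref{lem:duality-stability-reflection-delPezzo} to \(\mathbf{w}=\vv\), i.e.\ \(D=-K_T\) and \(\chi=-d\).

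First, I will check that \(\vv\) is fixed by \(\mathbb{D}\). By the formula for the cohomological action of \(\mathbb{D}\), the class \(\mathbb{D}(\mathbf{w})\) equals \(\mathbf{w}\) exactly when \(\chi=D\cdot K_T\). For \(\vv\) we have \(D\cdot K_T=-K_T^2=-d=\chi\), so \(\mathbb{D}(\vv)=\vv\).

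Next, I will verify the non-vanishing hypothesis of the lemma and compute \(\beta\). We have \(f^\ast D\cdot H=H^2=2d\), so
\[
  \chi+f^\ast D\cdot H=-d+2d=d\neq 0.
\]
Substituting into~\eqref{eq:polarisation-transform-general} gives
\[
  \beta=\frac{-d\,\alpha+(H\cdot\alpha)H}{d}=\frac{\alpha\cdot H}{d}H-\alpha.
\]
Since \(H^2=2d\), this is the same as \(2\frac{\alpha\cdot H}{H^2}H-\alpha\). This agrees with the general formula \(2\frac{f^\ast D\cdot \alpha}{f^\ast D\cdot H}H-\alpha\) stated just before the corollary, with \(f^\ast D=H\).

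Then I will identify the map \(\alpha\mapsto\beta\) geometrically, which is a direct check:
\begin{itemize}
  \item For \(\alpha=H\) we get \(\beta=2H-H=H\), so \(\mathbb{R}H\) is fixed.
  \item For \(\alpha\in H^\perp_{\mathbb{R}}\) we get \(\beta=-\alpha\).
  \item Since \(\NS(S)_{\mathbb{R}}=\mathbb{R}H\oplus H^\perp_{\mathbb{R}}\) (as \(H^2>0\)), these two properties characterise the orthogonal reflection. It is an isometry because it is \(\alpha\mapsto -(\alpha-2\frac{\alpha\cdot H}{H^2}H)\), i.e.\ minus the reflection in the hyperplane \(H^\perp\).
\end{itemize}

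Finally, whenever \(\beta\) is ample, the lemma gives \(M_{\vv}(S,\alpha)\iso M_{\mathbb{D}(\vv)}(S,\beta)=M_{\vv}(S,\beta)\).

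No step is a real obstacle; the only care needed is the arithmetic in the substitution and the sign conventions for \(\chi\) in \(\vv=(0,H,-d)\).
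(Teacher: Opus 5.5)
Your proposal is correct and matches the paper's proof: specialise Lemma~\ref{lem:duality-stability-reflection-delPezzo} to \(\mathbf{w}=\vv\) (so \(f^\ast D=H\), \(\chi=-d\), and \(\mathbb{D}(\vv)=\vv\)), use \(H^2=2d\) to get the formula for \(\beta\), and read off the isomorphism from the lemma. Your explicit checks that \(\chi+f^\ast D\cdot H=d\neq 0\) and that the map fixes \(\mathbb{R}H\) and acts as \(-1\) on \(H^\perp_{\mathbb{R}}\) are details the paper leaves implicit.
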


\begin{proof}
  This is the preceding discussion with \(f^\ast D=H\), together with \(H^2=2d\). The isomorphism follows from Lemma~\ref{lem:duality-stability-reflection-delPezzo}.
\end{proof}

\begin{remark}[Twisted duality and the choice of polarisation]\label{rmk:forced-wall-prym-invariants}
  The above calculations explain why the duality involution naturally leads to a wall moduli space. Suppose more generally that we replace \(\mathscr{O}_S(-H)\) by an arbitrary line bundle \(\mathscr{L}\in \Pic(S)\), and set
  \[
    \mathbb{D}_{\mathscr{L}}\coloneqq \mathbf{R}\mathcal{H}om(-,\mathscr{L})[1],
    \qquad \ell\coloneqq c_1(\mathscr{L})\in \NS(S).
  \]
  For \(\mathbf{w}\coloneqq (0,H,\chi)\in \widetilde H_{\mathrm{alg}}(S,\mathbb{Z})\), the numerical condition that \(\mathbb{D}_{\mathscr{L}}\) preserve \(\mathbf{w}\) is \(2\chi=H\cdot \ell\). If this holds and \(\chi-H\cdot \ell\neq 0\), then, for any ample class \(\alpha\in \Amp(S)\), Lemma~\ref{lem:duality-stability-reflection-delPezzo} identifies the transformed stability condition with ordinary stability for
  \[
    \beta\coloneqq \frac{\chi\alpha-(H\cdot \alpha)\ell}{\chi-H\cdot\ell}\in \NS(S)_{\mathbb{R}}
  \]
  whenever \(\beta\) is ample. Thus, if one wants \(\mathbb{D}_{\mathscr{L}}\) to act naturally on the chamber moduli problem \(M_{\mathbf{w}}(S,\alpha)\), rather than identify it with a different chamber model, the relevant condition is that \(\alpha\) and \(\beta\) lie in the same \(\mathbf{w}\)-chamber.

  In the del Pezzo cases with \(2\leq d\leq 8\), this fails systematically. Assume \(\chi\neq 0\), otherwise there is no \(\mathbf{w}\)-generic polarisation. Then
  \[
    \beta=2\frac{H\cdot \alpha}{H\cdot \ell}\,\ell-\alpha.
  \]
  For an admissible splitting \(-K_T=D_1+D_2\) as in Remark~\ref{rmk:explicit-minimal-polystable-strata-delPezzo}, the Mukai vectors
  \[
    \mathbf{w}_i\coloneqq \left(0,f^\ast D_i,\frac{1}{2}(f^\ast D_i\cdot \ell)\right), \qquad i=1,2,
  \]
  have non-empty summand moduli spaces, and the associated wall class is, up to sign,
  \[
    \delta_{D_1, D_2}\coloneqq (f^\ast D_1\cdot \ell)f^\ast D_2-(f^\ast D_2\cdot \ell)f^\ast D_1\in \NS(S).
  \]
  Since \(\delta_{D_1, D_2}\cdot \ell=0\), we have \(\delta_{D_1, D_2}\cdot \beta=-\delta_{D_1, D_2}\cdot \alpha\). Thus, whenever \(\alpha\) is \(\mathbf{w}\)-generic and \(\beta\) is ample, the two polarisations lie on opposite sides of an actual wall. In other words, \(\mathbb{D}_{\mathscr{L}}\) identifies the two chamber models across the corresponding wall, but it does not preserve the stability condition on either chamber. Passing to the wall polarisation is what makes the duality descend to an involution of a single moduli space. The same behaviour occurs in the unramified setting of~\cite[\textsection3.5, Prop.\@ 3.16]{ArbarelloEtAl2015RelativePrymEnriques}.
\end{remark}

We can now describe the symplectic involution on \(M_{\vv}(S,H)\) induced by the composition of \(\iota\) and \(\mathbb{D}\):

\begin{proposition}
  \label{prop:involutions-delPezzo}
  The equivalences \(\iota\) and \(\mathbb{D}\) define anti-symplectic commuting involutions on \(M_{\vv}(S,H)\). Their composition
  \begin{equation}
    \label{eq:tau-involution-delPezzo}
    \tau\colon \Db(S)\to \Db(S), \quad \mathscr{F}^\bullet \mapsto \mathbf{R}\mathcal{H}om(\iota^\ast \mathscr{F}^\bullet, \mathscr{O}_S(-H))[1]
  \end{equation}
  defines a non-trivial symplectic involution on \(M_{\vv}(S,H)\). The Lagrangian fibration
  \[ \pi\colon M_{\vv}(S,H) \to |H| \]
  is equivariant for the actions of \(\tau\) on the source and \(\iota\) on the target, and the singular locus of \(M_{\vv}(S,H)\) is pointwise fixed by \(\tau\).
\end{proposition}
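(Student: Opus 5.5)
We treat the claims of Proposition~\ref{prop:involutions-delPezzo} in order: well-definedness, behaviour on the symplectic form, equivariance of \(\pi\), non-triviality, and the fixed singular locus.

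\emph{Well-definedness.} Since \(\iota^\ast H=H\) and \(\iota\) is an automorphism, \(\iota^\ast\) preserves Mukai vectors of the form \((0,f^\ast D,\chi)\) and \(H\)-Gieseker (semi)stability. It therefore induces an automorphism of \(M_\vv(S,H)\), via the universal property of good moduli spaces applied to the stack. For \(\mathbb{D}\), Corollary~\ref{cor:duality-reflection-delPezzo} with \(\alpha=H\) gives \(\beta=H\) and \(\mathbb{D}(\vv)=\vv\). Hence \(\mathbb{D}\) induces an isomorphism \(M_\vv(S,H)\iso M_\vv(S,H)\), which is an involution because \(\mathbb{D}^2\simeq\mathrm{id}\) on pure one-dimensional sheaves. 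The two involutions commute because
\[
\iota^\ast\mathbf{R}\mathcal{H}om(\mathscr{F},\mathscr{O}_S(-H))\cong \mathbf{R}\mathcal{H}om(\iota^\ast\mathscr{F},\iota^\ast\mathscr{O}_S(-H))
\]
and we may fix an \(\iota\)-linearisation of \(\mathscr{O}_S(-H)\), for instance the pull-back \(f^\ast\mathscr{O}_T(K_T)\). Thus \(\tau\) is an involution.

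\emph{Symplectic behaviour.} At a stable point, the tangent space is \(\Ext^1(\mathscr{F},\mathscr{F})\) and the Mukai form is \((a,b)\mapsto\mathrm{tr}(a\cup b)\in H^2(S,\mathscr{O}_S)\).
\begin{itemize}
\item The differential of \(\iota^\ast\) is pull-back on Ext groups. It is compatible with Yoneda products and traces, and \(\iota^\ast\) acts by \(-1\) on \(H^2(S,\mathscr{O}_S)\), which is Serre dual to \(H^0(K_S)\) with \(\iota\) anti-symplectic. Hence \(\iota^\ast\) is anti-symplectic.
\item The differential of \(\mathbb{D}\) is the transpose map \(\Ext^1(\mathscr{F},\mathscr{F})\to\Ext^1(\mathbb{D}\mathscr{F},\mathbb{D}\mathscr{F})\). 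A contravariant functor reverses the order of composition, so \(\mathrm{tr}(a^t\cup b^t)=\mathrm{tr}(b\cup a)=-\mathrm{tr}(a\cup b)\) by graded commutativity of the trace on degree-one classes. Hence \(\mathbb{D}\) is anti-symplectic.
\end{itemize}
It follows that \(\tau\) preserves the symplectic form on \(M^s_\vv(S,H)\), and hence the reflexive form on all of \(M_\vv(S,H)\), since that form is determined by its restriction to the regular locus.

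\emph{Equivariance of \(\pi\) and non-triviality.} For a pure sheaf \(\mathscr{F}\) on a curve \(C\), the sheaf \(\mathcal{E}xt^1(\mathscr{F},\mathscr{O}_S(-H))\) has the same Fitting support \(C\), since dualising preserves the first Fitting ideal for pure one-dimensional sheaves (compute with a locally free resolution of length one). Hence \(\pi\circ\mathbb{D}=\pi\) and \(\pi\circ\iota^\ast=\iota\circ\pi\), so \(\pi\circ\tau=\iota\circ\pi\). By Lemma~\ref{lem:iota-action-linear-system-delPezzo}, \(\iota\) acts non-trivially on \(|H|\), so \(\tau\) is non-trivial.

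\emph{Singular locus, the main obstacle.} We use Proposition~\ref{prop:geometry-Mv-delPezzo}. For \(d=1\) the singular locus is empty, so there is nothing to prove. Otherwise the singular locus consists of strictly polystable points \(\bigoplus \mathscr{F}_i^{\oplus n_i}\) lying in the closure of the minimal strata, and by construction the types involved are built from Mukai vectors \(\vv_i=(0,f^\ast D_i,D_i\cdot K_T)\). This shape holds in every case:
\begin{itemize}
\item for the minimal strata it is Proposition~\ref{prop:polystable-strata-delPezzo};
\item for deeper strata the summands still have the same reduced Hilbert polynomial, so \(\chi_i=D_i\cdot K_T\).
\end{itemize}
It therefore suffices to show that \(\tau(\mathscr{G})\cong\mathscr{G}\) for every stable \(\mathscr{G}\) with such a Mukai vector, where \(D\) is a proper effective summand of \(-K_T\) with \(p_a(D)\le 0\) (or a multiple of a class with \(p_a=0\) in the non-primitive cases).

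Consider first the case where \(\vv(\mathscr{G})^2=-2\) or \(0\). The moduli space \(M_{\vv(\mathscr{G})}(S,H)\) is then a point or a K3 surface. Its support system \(|f^\ast D|=f^\ast|D|\) consists of \(\iota\)-invariant curves, by~\eqref{eq:pull-back-sections-delPezzo}. Over a smooth rational curve \(\Gamma\in|D|\), the curve \(C=f^{-1}(\Gamma)\) is a double cover of \(\mathbb{P}^1\), hyperelliptic (genus \(D^2+1\le1\)), and \(\iota\) is its hyperelliptic involution. For a degree-\(g-1\)-type line bundle \(L\) on \(C\), the twisted dual \(\tau(L)\) is \(\iota^\ast L^\vee\otimes\omega_C\otimes\mathscr{O}(-H)|_C\)-type. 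Since \(\iota^\ast L\cong L^\vee\otimes g^1_2{}^{\otimes k}\) on a hyperelliptic curve, a degree check should give \(\tau(L)\cong L\). This argument holds on a dense open subset, and closedness of the fixed locus then extends it.

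For higher-dimensional summand spaces (Type II splittings, or \(\vv_0\) of square \(2\)), the same hyperelliptic argument applies to curves in \(f^\ast|D|\) with \(p_a(D)=0\), because every such curve is a double cover of a rational curve.

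I expect the bookkeeping of the twist \(\mathscr{O}_S(-H)|_C\) against \(\omega_C\) and the \(g^1_2\) to be the main computation. Its role is to confirm that \(\tau\) fixes each stable summand exactly, not merely up to a \(2\)-torsion twist.
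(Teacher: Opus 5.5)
Your route is the paper's: reduce to the summand moduli spaces and use the double-cover involution on their general fibres. But the decisive step of the last claim is left undone, and as you set it up it would fail. Everything hinges on \(\tau(i_\ast\mathscr{L})\cong i_\ast\mathscr{L}\) for \(\mathscr{L}\) on a general \(C=f^{-1}(\Gamma)\in f^\ast|D|\), with \(\Gamma\) a smooth rational curve. Use \(\mathscr{L}\otimes\iota^\ast\mathscr{L}\cong f^\ast\mathrm{Nm}(\mathscr{L})\), together with \(\mathscr{O}_C(C-H)\cong f^\ast\mathscr{O}_\Gamma(D+K_T)\cong f^\ast\omega_\Gamma\cong f^\ast\mathscr{O}_\Gamma(-2)\) (adjunction). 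One gets
\[
  \tau(i_\ast\mathscr{L})\cong\mathbb{D}\bigl(i_\ast\iota^\ast\mathscr{L}\bigr)\cong i_\ast\bigl((\iota^\ast\mathscr{L})^\vee\otimes\mathscr{O}_C(C-H)\bigr)\cong i_\ast\bigl(\mathscr{L}\otimes f^\ast\mathscr{O}_\Gamma(-\deg\mathscr{L}-2)\bigr).
\]
With your choice \(\deg\mathscr{L}=g-1=D^2\), this is not \(i_\ast\mathscr{L}\). The missing input is that the Mukai vector \((0,f^\ast D,\chi)\) is \(\mathbb{D}\)-invariant only for \(\chi=D\cdot K_T\). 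Riemann--Roch with \(g(C)=D^2+1\) then gives \(\deg\mathscr{L}=D\cdot K_T+D^2=2p_a(D)-2=-2\), and the twist disappears. This is exactly the identity \(\iota(i_\ast\mathscr{L})\cong\mathbb{D}(i_\ast\mathscr{L})\) in the paper's proof. No \(2\)-torsion ambiguity can occur, since the discrepancy is pulled back from \(\Pic(\Gamma)\cong\mathbb{Z}\) and is detected by degree. The computation is also uniform in \(D\), so your case split (\(\vv(\mathscr{G})^2\in\{-2,0\}\), type II, square-\(2\) summands) is superfluous.

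Two further repairs are needed.

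\emph{The reduction.} Checking every stable summand of every deeper stratum is more than needed, and as stated it is incomplete: your parenthetical about multiples of \(p_a=0\) classes is exactly where the double-cover argument has no content. As in the paper, the closure of a minimal stratum lies in the closed image of the finite, \(\tau\)-equivariant direct-sum morphism \(M_{\vv_1}(S,H)\times M_{\vv_2}(S,H)\to M_\vv(S,H)\). So it suffices that \(\tau\) is trivial on each \(M_{\vv_i}(S,H)\) for the minimal splittings, where \(D_i\) has smooth rational general member by Proposition~\ref{prop:non-integral-members-delPezzo}. Passing from the general fibre to all of \(M_{\vv_i}(S,H)\) uses irreducibility of \(M_{\vv_i}(S,H)\). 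It does not follow from the claim that every member of \(f^\ast|D|\) is a double cover of a rational curve, which is false: special members are reducible.

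\emph{The sign of \(\mathbb{D}\).} Your argument that \(\mathbb{D}\) is anti-symplectic never uses the shift \([1]\), yet that shift is what produces the sign. Reversing two degree-one classes under a contravariant functor costs a Koszul sign. Run verbatim for the unshifted dual, your reasoning would make \(\mathscr{L}\mapsto\mathscr{L}^\vee\) anti-symplectic on the two-dimensional \(\Pic^0\) of an abelian surface, where it is \(-\mathrm{id}\) and hence symplectic. Either cite \cite[Prop.~3.11]{ArbarelloEtAl2015RelativePrymEnriques} as the paper does, or use the fibration. At a \(\mathbb{D}\)-fixed point of a smooth fibre, \(d\mathbb{D}\) is \(-1\) on the Lagrangian tangent space of the fibre and trivial on the normal directions \(T_b|H|\). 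Pairing a vertical vector with a horizontal one then forces \(\mathbb{D}^\ast\omega=-\omega\), since \(\omega\) spans the reflexive \(2\)-forms and so \(\mathbb{D}^\ast\omega=\pm\omega\).
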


The involution \(\tau\) is a \emph{Prym involution} in the following sense. Over the \(\iota\)-fixed hyperplane \(f^\ast|-K_T|\subset |H|\), a general curve \(C\) is a smooth genus \(d+1\) double cover of a smooth genus-one curve \(E\in |-K_T|\), with covering involution given by \(\iota\). On the fibre \(\pi^{-1}(C)\cong \Pic^0(C)\), the involution \(\tau\) acts as
\begin{equation}\label{eq:tau-Prym-delPezzo}
  \mathscr{L}\mapsto \iota^\ast \mathscr{L}^\vee,
\end{equation}
whose fixed locus is the Prym variety of the double cover \(f\colon C\to E\). This is the involution used in~\cite{MarkushevichTikhomirov2007Prymian, ArbarelloEtAl2015RelativePrymEnriques, Matteini2016SingularPrymFibration, BrakkeeEtAl2026RelativePrym} to construct relative compactified Prym varieties from K3 surfaces equipped with anti-symplectic involutions.

\smallskip
The first special feature of the del Pezzo setting highlighted by Proposition~\ref{prop:involutions-delPezzo} is that the singular locus of \(M_{\vv}(S,H)\) is pointwise fixed by \(\tau\). This is the hyperelliptic analogue of the Prym picture above: the Jordan--H\"older factors of a general singular point are supported on smooth hyperelliptic double covers of rational curves in \(T\). In this case, the corresponding Prym variety is the whole Jacobian, or equivalently the action~\eqref{eq:tau-Prym-delPezzo} is trivial. This is the key input in the proof:

\begin{proof}[Proof of Proposition~\ref{prop:involutions-delPezzo}]
  Any Mukai vector \(\mathbf{w}\coloneqq (0, f^\ast D, \chi)\in \widetilde H_{\mathrm{alg}}(S,\mathbb{Z})\) is fixed by the action of \(\iota\). Since \(H=-f^\ast K_T\) is also \(\iota\)-invariant, \(\iota\) induces an anti-symplectic involution on the associated Beauville--Mukai moduli space \(M_{\mathbf{w}}(S,H)\)~\cite[Lemma 3.6]{ArbarelloEtAl2015RelativePrymEnriques}. By Lemma~\ref{lem:duality-stability-reflection-delPezzo}, the duality \(\mathbb{D}\) preserves \(\mathbf{w}\) and \(H\)-semistability precisely when \(\chi=D\cdot K_T\); in that case it induces an anti-symplectic involution on \(M_{\mathbf{w}}(S,H)\)~\cite[Prop.\@ 3.11]{ArbarelloEtAl2015RelativePrymEnriques}. In particular, this holds for our choice of Mukai vector \(\vv=(0, H, -d)\in \widetilde H_{\mathrm{alg}}(S,\mathbb{Z})\), so that \(\tau\) is a well-defined symplectic involution on \(M_{\vv}(S,H)\).

  Since \(\mathbb{D}\) does not affect the support of a sheaf, \(\pi\) is equivariant for the actions of \(\tau\) and \(\iota\). Since the action of \(\iota\) is non-trivial on \(|H|\) and \(\pi\) is surjective, the action of \(\tau\) on \(M_{\vv}(S,H)\) is non-trivial.

  For the last claim, by \textsection\ref{subsec:stratification-polystable-type}, the singular locus of \(M_{\vv}(S,H)\) is the union of the closures of the strata \(M_{\vv}^\mathfrak{t}(S,H)\) described in Proposition~\ref{prop:polystable-strata-delPezzo}. For any such decomposition \(\vv=\vv_1+\vv_2\), the action of \(\tau\) is well-defined on each \(M_{\vv_i}(S,H)\), and the direct sum morphism~\eqref{eq:direct-sum-morphism} is \(\tau\)-equivariant. It is therefore enough to check that the action of \(\tau\) on \(M_{\vv_i}(S,H)\) is trivial for any \(\vv_i\) as in Proposition~\ref{prop:polystable-strata-delPezzo}.

  Let \(D\in \NS(T)\) be an effective direct summand of the anticanonical class \(-K_T\) such that \(D+K_T\) is anti-effective, and assume that a general member \(\Gamma\) of \(|D|\) is a smooth rational curve, according to Proposition~\ref{prop:non-integral-members-delPezzo}. The corresponding general member of \(|f^\ast D|\) on \(S\) is a \(\iota\)-invariant smooth hyperelliptic curve \(C\) of genus \(D^2+1\), which is a double cover \(f\colon C\to \Gamma\) branched at \(-2 \Gamma\cdot K_T=2D^2+4\) points. The covering involution \(\iota\) restricts to the hyperelliptic involution on \(C\).

  Now let
  \[ \mathbf{w}\coloneqq (0, f^\ast D, D\cdot K_T)\in \widetilde H_{\mathrm{alg}}(S,\mathbb{Z}) \]
  be the corresponding direct summand of \(\vv\). Here the \(\iota\)-action on \(|f^\ast D|\) is trivial by the pull-back isomorphism~\eqref{eq:pull-back-sections-delPezzo}, so the moduli space \(M_{\mathbf{w}}(S,H)\) comes with a \(\tau\)-invariant Lagrangian fibration
  \[ M_{\mathbf{w}}(S,H)\to |f^\ast D|\cong \mathbb{P}^{D^2+1}, \]
  whose fibre over \(C\) is the component of the Picard scheme \(\Pic^{-2}(C)\). Let \(i\colon C\hookrightarrow S\) be the inclusion. For any line bundle \(\mathscr{L}\in \Pic^{-2}(C)\), the actions of \(\iota\) and \(\mathbb{D}\) on \(i_\ast \mathscr{L}\) are given by
  \[ \iota(i_\ast \mathscr{L})=i_\ast \iota^\ast \mathscr{L}\cong i_\ast(\mathscr{L}^\vee \otimes f^\ast \mathscr{O}_\Gamma (-2)), \quad \mathbb{D}(i_\ast \mathscr{L})=i_\ast(\mathscr{L}^\vee \otimes \mathscr{O}_C(f^\ast D-H)). \]
  Since \(\mathscr{O}_C(f^\ast D-H)\cong f^\ast \mathscr{O}_\Gamma(D+K_T)\) and \(\mathscr{O}_\Gamma(D+K_T)\cong \mathscr{O}_\Gamma(-2)\) by adjunction, we deduce that \(\iota(i_\ast \mathscr{L})=\mathbb{D}(i_\ast \mathscr{L})\) for all \(\mathscr{L}\in \Pic^{-2}(C)\). Since \(\tau\) acts trivially on the fibre over a general curve in \(|f^\ast D|\), it acts trivially on \(M_{\mathbf{w}}(S,H)\), as claimed.
\end{proof}

\begin{remark}[Action on \(\vv\)-chambers and Mukai morphisms]\label{rmk:tau-chambers-mukai-morphisms}
Assume that \(\alpha\in \Amp(S)\) is \(\vv\)-generic and sufficiently close to \(H\), and that the class \(\beta\) from~\eqref{eq:polarisation-reflection-v} is also ample and \(\vv\)-generic. Since \(\NS(S)=f^\ast\NS(T)\), the involution \(\iota\) fixes \(\alpha\) and \(\beta\). Hence Corollary~\ref{cor:duality-reflection-delPezzo} gives an isomorphism
  \[
    \tau\colon M_\vv(S,\alpha)\iso M_\vv(S,\beta).
  \]
On Mukai vectors of objects, this anti-autoequivalence acts by
\[
  \mathbf{w}\mapsto -\iota^\ast(\mathbf{w}^\vee\cdot e^{-H})\in \widetilde H(S,\mathbb{Z}).
\]
Let
\[
  \lambda_\vv^\alpha\colon \vv^\perp\iso H^2(M_\vv(S,\alpha),\mathbb{Z})
\]
be the Mukai morphism of Theorem~\ref{thm:2nd-cohomology-moduli-sheaves}, and let \(\lambda_\vv^\beta\) denote the corresponding morphism for \(M_\vv(S,\beta)\). Yoshioka's functoriality of the Mukai morphism under anti-autoequivalences gives an additional sign~\cite[Prop.\@ 2.5]{Yoshioka2001ModuliAbelianSurfaces} (see also~\cite[Prop.\@ 3.10]{Bottini2024StableSheavesK3Surfaces}), so that we have a commutative diagram
\begin{equation}\label{eq:action-tau-vperp-Mukai-morphism}
  \begin{tikzcd}
    \vv^\perp \arrow[r, "\lambda_\vv^\beta"] \arrow[d, "\iota^\ast((-)^\vee\cdot e^{-H})"] & H^2(M_\vv(S, \beta), \mathbb{Z}) \arrow[d, "\tau^\ast"] \\
    \vv^\perp \arrow[r, "\lambda_\vv^\alpha"] & H^2(M_\vv(S, \alpha), \mathbb{Z})
  \end{tikzcd}
\end{equation}
or equivalently
\[
  \tau^\ast\lambda_\vv^\beta(\mathbf{w})
  =
  \lambda_\vv^\alpha\bigl(\iota^\ast(\mathbf{w}^\vee\cdot e^{-H})\bigr),
  \qquad \mathbf{w}\in \vv^\perp.
\]
\end{remark}

\subsection{The fixed locus and relative Prym varieties}
We now investigate the fixed locus of \(\tau\) on \(M_{\vv}(S,H)\). It contains a unique irreducible component of codimension \(2\), namely the relative compactified Prym variety studied systematically in~\cite{BrakkeeEtAl2026RelativePrym}. This component is the source of the strictly canonical singularities in the quotient, and hence of the non-trivial terminalisation. 

\begin{proposition}
  \label{prop:fixed-locus-tau-delPezzo}
  The fixed locus \(M_{\vv}^\tau(S,H)\) contains:
  \begin{enumerate}
    \item a unique irreducible component of codimension \(2\), equipped with a Lagrangian fibration
    \begin{equation} \label{eq:lagrangian-fibration-Prym}
      \pi\colon P_{\vv}(S,H)\to f^\ast|-K_T|\subset |H|^\iota
    \end{equation}
    which contains the singular locus \(M_{\vv}^{\mathrm{sing}}(S,H)\) and whose general fibre is a Prym variety of a double cover of a smooth genus-one curve;
    \item finitely many isolated points given by the \(2\)-torsion \(\Pic^0(R)[2]\) over the smooth ramification curve \(R\in |H|\). 
  \end{enumerate} 
\end{proposition}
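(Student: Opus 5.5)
The plan is to use the $\tau$-equivariance of $\pi$ from Proposition~\ref{prop:involutions-delPezzo}. This gives $M_\vv^\tau(S,H)\subset \pi^{-1}(|H|^\iota)$, and by Lemma~\ref{lem:iota-action-linear-system-delPezzo} the locus $|H|^\iota$ is the disjoint union of the hyperplane $f^\ast|-K_T|$ and the isolated point $[R]$. The fixed locus therefore splits into a part over the hyperplane and a part over $[R]$, and I treat the two separately.

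\emph{The part over $[R]$.} The curve $R$ is smooth of genus $d+1$, so $\pi^{-1}(R)\cong\Pic^{\deg}(R)$ for the appropriate degree, and every sheaf in it is stable. Since $\iota$ fixes $R$ pointwise, $\iota^\ast$ acts trivially on $\Pic(R)$. The involution then reads $\mathscr{L}\mapsto \mathscr{L}^\vee\otimes\mathscr{O}_R(-H)\otimes\omega_R$, where the last two factors come from the Grothendieck duality computation $\mathcal{E}xt^1(i_\ast\mathscr{L},\mathscr{O}_S(-H))\cong i_\ast(\mathscr{L}^\vee\otimes\mathscr{O}_R(R-H))$ with $\mathscr{O}_R(R)=\omega_R$. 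The result is an affine involution $x\mapsto c-x$ on a torsor under $\Pic^0(R)$. Its fixed points form a torsor under $\Pic^0(R)[2]$, since $c$ is divisible by $2$ after choosing a square root. This gives $2^{2d+2}$ points. They are isolated because $\pi^{-1}(R)$ is a connected component of $\pi^{-1}(|H|^\iota)$ near $R$.

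\emph{The part over $f^\ast|-K_T|$.} Over a general $E\in|-K_T|$ the curve $C=f^{-1}(E)$ is smooth and $\tau$ acts as in~\eqref{eq:tau-Prym-delPezzo}. The fixed locus in $\Pic(C)$ is therefore a finite union of translates of $\ker(1+\iota^\ast)$ on $\Pic^0(C)$. Because $C\to E$ is ramified, this kernel is connected and equals the Prym variety $\operatorname{Prym}(C/E)$, of dimension $g(C)-g(E)=d$. This is where the del Pezzo hypothesis is used: ramification makes the norm kernel connected, unlike the étale case. Hence over the open set of smooth curves the fixed locus has a unique component dominating $f^\ast|-K_T|$, of dimension $d+d=2d$. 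Its codimension in $M_\vv(S,H)$, which has dimension $2d+2$, is therefore $2$. Define $P_\vv(S,H)$ as its closure. The restriction of $\pi$ is Lagrangian because the fixed locus of a symplectic involution is a symplectic subvariety on the smooth stable locus, and its fibres over the base $\mathbb{P}^d$ have dimension $d=\tfrac12\cdot 2d$.

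The remaining steps are to show that (a) no other component of codimension $\le 2$ exists over the hyperplane, and (b) $M_\vv^{\mathrm{sing}}(S,H)\subset P_\vv(S,H)$. For (a), any other fixed component must lie over the discriminant locus of $f^\ast|-K_T|$. There I would bound dimensions using the symplectic structure: at a fixed stable point the fixed locus is smooth and symplectic, so its local dimension equals $\dim T^\tau$. I would then compare with the dimension of fibres over singular curves, whose discriminant has codimension $\ge1$ in the hyperplane, to force codimension $\geq 4$. This step is in the spirit of~\cite{BrakkeeEtAl2026RelativePrym}, which I would cite for the irreducibility of the relative compactified Prym. For (b), Proposition~\ref{prop:involutions-delPezzo} already gives $M_\vv^{\mathrm{sing}}\subset M_\vv^\tau$. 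I would show that a general point $\mathscr{F}_1\oplus\mathscr{F}_2$ of each minimal stratum of Proposition~\ref{prop:polystable-strata-delPezzo} is a limit of Prym points. To do this, use the Kuranishi model of \textsection\ref{subsec:Kuranishi-local-model}: the quiver has two vertices joined by $\vv_1\cdot\vv_2=4$ arrows plus loops, and $\tau$ acts linearly on $\Ext^1$. The task is to exhibit a $\tau$-invariant deformation in $\mu^{-1}(0)^\tau$ that smooths the node direction, namely a nonzero $\tau$-fixed extension class in $\Ext^1(\mathscr{F}_1,\mathscr{F}_2)\oplus\Ext^1(\mathscr{F}_2,\mathscr{F}_1)$. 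Showing that the invariant part of this $4$-dimensional space meets $\mu^{-1}(0)$ nontrivially is the step I expect to be the main obstacle. The plan there is to compute the $\tau$-action on these Ext groups via Serre duality, where $\tau$ swaps the two factors up to sign, and check that the invariant subspace is Lagrangian, hence isotropic, so that its vectors lie in $\mu^{-1}(0)$.
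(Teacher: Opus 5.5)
You handle the fibre over $[R]$ and the generic Prym component correctly. Over $R$ one has $\mathscr{O}_R(-H)\otimes\omega_R\cong\mathscr{O}_R$, so $c=0$ and $\tau$ is just $\mathscr{L}\mapsto\mathscr{L}^\vee$. For uniqueness, the paper also simply relies on \cite{BrakkeeEtAl2026RelativePrym}.

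The gap is in step (b), and the justification you give for the key step is false. Let $W$ be the $\tau$-invariant part of $V_{12}\oplus V_{21}$; note that $V_{12}\oplus V_{21}$ is $8$-dimensional and $W$ is the $4$-dimensional piece. $W$ is not Lagrangian: the fixed subspace of a symplectic involution is always a symplectic subspace. Concretely, $W=\{(v,\tau_{12}v)\}$, and the symplectic form restricts to $(v,w)\mapsto 2\langle\tau_{12}w,v\rangle$, which is non-degenerate. Isotropy would not give $\mu=0$ anyway: every line is isotropic, while $\mu(v,\alpha)=\alpha(v)$ need not vanish. What actually puts $W$ inside $\mu^{-1}(0)$ is the skew-symmetry of $\tau_{12}\colon V_{12}\to V_{21}\cong V_{12}^\vee$ with respect to the Serre duality pairing. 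This gives $\mu(v,\tau_{12}v)=\langle\tau_{12}v,v\rangle=0$. Equivalently, the contravariant $\tau$ acts by inversion on $\mathbb{P}G\cong\mathbb{G}_m$, so $\mu\circ\tau=-\mu$.

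A single non-zero fixed vector, which is your stated target, is also not enough. It only shows that $[\mathscr{F}]$ is a limit of $\tau$-fixed stable points. Nothing in your argument places those points on $P_\vv(S,H)$ rather than on a fixed component of codimension $\geq 4$, and your step (a) does not exclude such components. ``Smoothing the node direction'' would require knowing how the Kronecker coordinates map to $|H|$, which you do not analyse.

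The missing ingredient is a dimension count. You need that $\tau$ acts trivially on the loop part $V_{11}\oplus V_{22}$. This holds because $\tau$ acts trivially on each $M_{\vv_i}(S,H)$, as shown in the proof of Proposition~\ref{prop:involutions-delPezzo}. Then the fixed locus of the local model at $[\mathscr{F}]$ contains the image of $W\times(V_{11}\oplus V_{22})$, namely $W/\{\pm1\}\times(V_{11}\oplus V_{22})\cong\overline{\mathcal{O}}^{\mathfrak{sp}_4}_{\mathrm{min}}\times\mathbb{C}^{2d-4}$. By Lemma~\ref{lem:Kronecker-quiver-tau} it in fact equals this set, which is irreducible of dimension $2d$. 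Hence $[\mathscr{F}]$ lies on a codimension-$2$ component of $M^\tau_\vv(S,H)$, which by uniqueness is $P_\vv(S,H)$. This is exactly how the paper concludes.
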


The normalisation of the maximal fixed component \(P_{\vv}(S,H)\subset M^\tau_{\vv}(S,H)\) is the \emph{relative Prym variety} \(\mathcal{P}_H\) of~\cite[Def.\@ 2.8]{BrakkeeEtAl2026RelativePrym}. For \(d\geq 3\), \(\mathcal{P}_H\) is shown to be an irreducible symplectic variety~\cite[Cor.\@ 7.7]{BrakkeeEtAl2026RelativePrym}. 

The following lemma provides the local quiver computation used in the proof of Proposition~\ref{prop:fixed-locus-tau-delPezzo}.

\begin{lemma}\label{lem:Kronecker-quiver-tau}
  Let \(n\geq 1\) and \(K\) be the \(2n\)-Kronecker quiver with two vertices and \(2n\) arrows between them. Let
  \begin{equation}\label{eq:tau-skew-symmetric-isomorphism}
    \tau \colon \mathrm{Rep}(K, (1,1))\iso \mathrm{Rep}(K^{\mathrm{op}}, (1,1)).
  \end{equation}
  be a skew-symmetric isomorphism, and denote by \(\tau\) also the induced symplectic involution on \(\mathrm{Rep}(\overline{K}, (1,1))\).
  Then there are isomorphisms
  \begin{equation*}
    \mathfrak{M}(K, (1,1)) \cong \overline{\mathcal{O}}_{\mathrm{min}}^{\mathfrak{sl}_{2n}},
    \qquad
    \mathfrak{M}^\tau(K, (1,1)) \cong \overline{\mathcal{O}}_{\mathrm{min}}^{\mathfrak{sp}_{2n}}.
  \end{equation*}
  More generally, if the quiver \(Q\) is obtained from \(K\) by adjoining \(m_1\) and \(m_2\) loops at the two vertices and \(\tau\) acts on \(\mathrm{Rep}(\overline{Q}, (1,1))\) by the identity on the loop spaces, then
  \begin{equation*}
    \mathfrak{M}(Q, (1,1)) \cong \overline{\mathcal{O}}_{\mathrm{min}}^{\mathfrak{sl}_{2n}} \times \mathbb{C}^{2m_1+2m_2},
    \qquad
    \mathfrak{M}^\tau(Q, (1,1)) \cong \overline{\mathcal{O}}_{\mathrm{min}}^{\mathfrak{sp}_{2n}} \times \mathbb{C}^{2m_1+2m_2}.
  \end{equation*}
\end{lemma}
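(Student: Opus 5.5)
The plan is to make everything explicit in coordinates. For dimension vector \((1,1)\), set \(V\coloneqq \mathrm{Rep}(K,(1,1))\cong\mathbb{C}^{2n}\), which records the \(2n\) arrows from vertex \(1\) to vertex \(2\). Then \(\mathrm{Rep}(K^{\mathrm{op}},(1,1))\cong V^\vee\), and \(\mathrm{Rep}(\overline{K},(1,1))=V\oplus V^\vee\) with its standard symplectic form. The group \(\mathrm{GL}(\mathbf{n})=\mathbb{G}_m^2\) acts through the quotient \(\mathbb{P}G\cong\mathbb{G}_m\), by \(t\cdot(a,b)=(ta,t^{-1}b)\). The moment map is \(\mu(a,b)=\langle b,a\rangle\in\mathbb{C}\cong\mathrm{Lie}(\mathbb{P}G)^\vee\).

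\emph{Step 1: the quotient \(\mathfrak{M}(K,(1,1))\).} By classical invariant theory, \(\mathbb{C}[V\oplus V^\vee]^{\mathbb{G}_m}\) is generated by the entries of \(M\coloneqq a\otimes b\in\End(V)\cong\mathfrak{gl}_{2n}\). The relations among these entries are generated by the \(2\times 2\) minors. So \((V\oplus V^\vee)\gitq\mathbb{G}_m\) is the reduced normal variety \(\{\rk M\leq 1\}\). Imposing \(\mu=\mathrm{tr}(M)=0\) cuts out the traceless rank-\(\leq 1\) matrices, which is exactly \(\overline{\mathcal{O}}_{\min}^{\mathfrak{sl}_{2n}}\). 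To pass the equation \(\mu=0\) through the quotient, I would use that \(\mu\) is invariant, so \(\mu^{-1}(0)\gitq\mathbb{G}_m\) is the zero locus of \(\mathrm{tr}\) inside the rank-\(\leq 1\) determinantal variety. Reducedness of that zero locus follows from Kraft--Procesi, or from Crawley-Boevey's normality theorem \cite{CrawleyBoevey2003MarsdenWeinsteinNormality} applied to this flat moment map.

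\emph{Step 2: the involution.} A skew-symmetric isomorphism \(\tau\colon V\iso V^\vee\) is the same thing as a symplectic form \(J\) on \(V\). The induced symplectic involution on \(V\oplus V^\vee\) is \((a,b)\mapsto(\pm J^{-1}b,\,Ja)\), with the sign fixed by the requirement that it preserve the symplectic form. It exchanges the two torus weights, so it normalises the \(\mathbb{G}_m\)-action and preserves \(\mu^{-1}(0)\). Hence it descends to the quotient. On the generators \(M=a\otimes b\), it acts by the linear anti-involution
\[
\sigma(M)=-J^{-1}M^{T}J .
\]
I have to check the sign carefully here: it is dictated by skew-symmetry of \(J\), and it must be the one whose fixed subalgebra is \(\mathfrak{sp}(V,J)=\mathfrak{sp}_{2n}\) rather than \(\mathfrak{so}\). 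Since the embedding \(\mathfrak{M}(K,(1,1))\hookrightarrow\mathfrak{sl}_{2n}\) is \(\tau\)-equivariant, the fixed locus on the quotient is
\[
\overline{\mathcal{O}}_{\min}^{\mathfrak{sl}_{2n}}\cap\mathfrak{sl}_{2n}^{\sigma}=\{M\in\mathfrak{sp}_{2n}\mid \rk M\leq 1\}.
\]
A rank-one element of \(\mathfrak{sp}_{2n}\) has the form \(v\otimes J v\) up to scalar. It is automatically nilpotent because \(J(v,v)=0\). The set of such elements is precisely \(\overline{\mathcal{O}}_{\min}^{\mathfrak{sp}_{2n}}\), the closure of the orbit of highest root vectors, which equals \(\mathbb{C}^{2n}/\pm1\).

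\emph{Step 3: the loops.} For dimension vector \((1,1)\), each loop at vertex \(i\) together with its double contributes a copy of \(\mathbb{C}^2\). The torus acts trivially on these coordinates, and they contribute nothing to the moment map, because their commutators in \(\mathfrak{gl}_1\) vanish. So \(\mathrm{Rep}(\overline{Q},(1,1))=(V\oplus V^\vee)\times\mathbb{C}^{2m_1+2m_2}\), with \(\mu\) and the group action factoring through the first factor. This gives the product decomposition of \(\mathfrak{M}(Q,(1,1))\). Since \(\tau\) acts trivially on the loop factor, it also gives the decomposition of \(\mathfrak{M}^\tau(Q,(1,1))\).

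The main obstacle is Step 2. There I must pin down the sign conventions so that the descended involution is indeed the \(\mathfrak{sp}\)-type anti-involution. I must also verify that the fixed locus is computed scheme-theoretically in the quotient (taken with its reduced structure), rather than being confused with the quotient of the fixed locus upstairs. I would handle this by working entirely inside the linear space \(\mathfrak{sl}_{2n}\), where the fixed locus of a linear involution restricted to a closed invariant subvariety is transparent.
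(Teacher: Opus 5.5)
Your proposal is correct and follows essentially the same route as the paper: identify \(\mu^{-1}(0)\gitq\mathbb{G}_m\) with the traceless rank-\(\leq 1\) endomorphisms via \((v,\alpha)\mapsto v\otimes\alpha\), transport \(\tau\) to the linear anti-involution \(A\mapsto -J^{-1}A^{t}J\) (the paper's \(A\mapsto -JA^tJ^{-1}\)) whose fixed subalgebra is \(\mathfrak{sp}_{2n}\), and split off the loops as a weight-zero factor on which \(\tau\) is trivial. You add useful detail the paper leaves implicit: the invariant theory, the form \(v\otimes Jv\) of rank-one elements of \(\mathfrak{sp}_{2n}\), and reducedness. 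The sign you left open is \(+\); with skew \(J\), the map \((v,\alpha)\mapsto(\tau^{-1}\alpha,\tau v)\) is a genuine symplectic involution, while the \(-\) sign would be anti-symplectic.
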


Here \(\overline{\mathcal{O}}^\mathfrak{g}_{\mathrm{min}}\subset \mathfrak{g}\) denotes the closure of the minimal nilpotent orbit in the simple Lie algebra \(\mathfrak{g}\). In particular, it is irreducible and normal~\cite{KraftProcesi1982ConjugacyClasses}. 

\begin{proof}
  Set \(V\coloneqq \mathrm{Rep}(K, (1,1))\). Then \(V^\vee\cong \mathrm{Rep}(K^{\mathrm{op}}, (1,1))\) and \(\dim V=2n\). The group \(\mathbb{P}G\coloneqq \mathbb{G}_m\) acts on
  \[
    \mathrm{Rep}(\overline{K}, (1,1))\cong V\oplus V^\vee
  \]
  with weights \((1,-1)\), and, up to a scalar, a moment map is given by
  \[
    \mu\colon V\oplus V^\vee \to \mathrm{Lie}(\mathbb{P}G)^\vee \cong \mathbb{C},\qquad (v,\alpha)\mapsto \alpha(v).
  \]
  The quotient morphism
  \begin{equation}\label{eq:quotient-Kronecker-minimal-orbit}
    \mu^{-1}(0)\to \End(V), \qquad (v,\alpha)\mapsto v\otimes \alpha
  \end{equation}
  has image the cone of traceless endomorphisms of rank at most \(1\), namely \(\overline{\mathcal{O}}_{\mathrm{min}}^{\mathfrak{sl}(V)}\cong \overline{\mathcal{O}}_{\mathrm{min}}^{\mathfrak{sl}_{2n}}\). Hence \(\mathfrak{M}(K, (1,1))\cong \overline{\mathcal{O}}_{\mathrm{min}}^{\mathfrak{sl}_{2n}}\). The skew-symmetric isomorphism~\eqref{eq:tau-skew-symmetric-isomorphism} induces the involution
  \begin{equation} \label{eq:involution-quiver-local}
    (v, \alpha)\mapsto (\tau^{-1}(\alpha), \tau(v))
  \end{equation}
  on \(\mathrm{Rep}(\overline{K}, (1,1))\). Choosing symplectic coordinates on \(V\), so that \(\tau\) is represented by the standard symplectic form \(J\), the morphism~\eqref{eq:quotient-Kronecker-minimal-orbit} identifies \eqref{eq:involution-quiver-local} with the involution
  \[
    A\mapsto -JA^tJ^{-1}
  \]
  on \(\overline{\mathcal{O}}_{\mathrm{min}}^{\mathfrak{sl}_{2n}}\). Its fixed locus is the intersection with \(\mathfrak{sp}_{2n}\), hence \(\mathfrak{M}^\tau(K, (1,1))\cong \overline{\mathcal{O}}_{\mathrm{min}}^{\mathfrak{sp}_{2n}}\). 
  
  For the quiver \(Q\), the added loops contribute a weight-zero symplectic summand of dimension \(2m_1+2m_2\), so the symplectic reduction splits as a product with \(\mathbb{C}^{2m_1+2m_2}\). Since \(\tau\) acts trivially on this summand by assumption, the same product decomposition holds for the fixed locus. This proves the second claim.
\end{proof}

\begin{proof}[Proof of Proposition~\ref{prop:fixed-locus-tau-delPezzo}]
  The uniqueness of the relative Prym component \(P_{\vv}(S,H)\), together with its properties, is established in~\cite[\textsection2.2.2, Cor.\@ 2.16]{BrakkeeEtAl2026RelativePrym}. It remains to show that it contains the whole singular locus of \(M_{\vv}(S,H)\), which is pointwise fixed by Proposition~\ref{prop:involutions-delPezzo}. By uniqueness, it is enough to show that a general singular point in each of the minimal strata \(M^\mathfrak{t}_\vv(S,H)\) described in Proposition~\ref{prop:polystable-strata-delPezzo} lies in a codimension-\(2\) component of the fixed locus.  

  Let \(\mathfrak{t}\coloneqq (1, \vv_1; 1, \vv_2)\) be such a minimal polystable type, and let \([\mathscr{F}]\in M^\mathfrak{t}_\vv(S,H)\) be a general point, with
  \[ \mathscr{F}=\mathscr{F}_1\oplus \mathscr{F}_2, \]
  where \(\mathscr{F}_1\) and \(\mathscr{F}_2\) are non-isomorphic stable sheaves with Mukai vectors \(\vv_1\) and \(\vv_2\), respectively. Set
  \[ V_{ij}\coloneqq \Ext^1(\mathscr{F}_i, \mathscr{F}_j), \qquad 1\leq i,j\leq 2. \]
  Since \(\mathfrak{t}\) is minimal, we have \(\dim V_{12}=\dim V_{21}=\vv_1\cdot \vv_2=4\), while \(\dim V_{ii}=\vv_i^2+2\). By the local description from \textsection\ref{subsec:Kuranishi-local-model}, the \'etale-local germ of \(M_{\vv}(S,H)\) at \([\mathscr{F}]\) is identified with the affine symplectic reduction
  \begin{equation*}
    \Ext^1(\mathscr{F}, \mathscr{F}) \sympq \mathbb{P}G,
  \end{equation*}
  where \(\mathbb{P}G\cong \mathbb{G}_m\) and
  \begin{equation*}
    \Ext^1(\mathscr{F}, \mathscr{F}) \cong V_{11}\oplus V_{12}\oplus V_{21}\oplus V_{22}
  \end{equation*}
  with weights \((0,1,-1,0)\), respectively.

  Since the action of \(\tau\) is trivial on each \(M_{\vv_i}(S,H)\) by Proposition~\ref{prop:involutions-delPezzo}, we can choose isomorphisms \(\alpha_i\colon \mathscr{F}_i\iso \tau(\mathscr{F}_i)\). Since \(\tau\) is contravariant, this yields linear isomorphisms
  \begin{equation*}
    \tau_{ij}\colon V_{ij}\iso V_{ji}, \qquad e\mapsto \alpha_i^{-1}\circ \tau(e)\circ \alpha_j.
  \end{equation*}
  For \(i=j\), the tangent space \(T_{[\mathscr{F}_i]}M_{\vv_i}(S,H)\) is identified with \(V_{ii}\), and since \(\tau\) acts trivially on \(M_{\vv_i}(S,H)\), its differential at \([\mathscr{F}_i]\) is the identity. Thus \(\tau\) acts trivially on \(V_{11}\oplus V_{22}\). On the other hand, Serre duality identifies \(V_{21}\cong V_{12}^\vee\), and because \(\tau\) is symplectic on \(M_{\vv}(S,H)\), the map \(\tau_{12}\colon V_{12}\to V_{21}\cong V_{12}^\vee\) is skew-symmetric with respect to this pairing. Therefore the \(\tau\)-equivariant local model splits as
  \begin{equation*}
    (V_{11}\oplus V_{22}) \times (V_{12}\oplus V_{12}^\vee)\sympq \mathbb{G}_m,
  \end{equation*}
  where \(\tau\) acts trivially on the first factor and on the second factor via the involution induced by \(\tau_{12}\). Equivalently, if \(Q\) denotes the corresponding Ext-quiver, then \(Q\) is obtained from the \(4\)-Kronecker quiver \(K\) by adjoining \(m_i\coloneqq \frac{1}{2}(\vv_i^2+2)\) loops at the vertex \(i\), and the induced \(\tau\)-action on \(\mathrm{Rep}(\overline{Q}, (1,1))\) is precisely the one appearing in Lemma~\ref{lem:Kronecker-quiver-tau}: it is induced by the skew-symmetric map \(\tau_{12}\) on the Kronecker part and acts as the identity on the loop spaces. By Lemma~\ref{lem:Kronecker-quiver-tau}, the \(\tau\)-fixed locus in the local model of \(M_{\vv}(S,H)\) at \([\mathscr{F}]\) is isomorphic to
  \begin{equation*}
    \overline{\mathcal{O}}_{\mathrm{min}}^{\mathfrak{sp}_4}\times \mathbb{C}^{2m_1+2m_2}.
  \end{equation*}
  Since
  \begin{equation*}
    2m_1+2m_2=\vv_1^2+\vv_2^2+4=\vv^2-2\vv_1\cdot \vv_2+4=2d-4,
  \end{equation*}
  this fixed locus is irreducible of dimension \(4+(2d-4)=2d\), and therefore gives a single irreducible codimension \(2\) component of \(M^\tau_{\vv}(S,H)\) through the general point \([\mathscr{F}]\).
  
  To prove \(\mathrm{(ii)}\), recall from Proposition~\ref{prop:involutions-delPezzo} that the support morphism \(\pi\colon M_{\vv}(S,H)\to |H|\) is \(\tau\)-equivariant with respect to the action of \(\iota\) on \(|H|\). By Lemma~\ref{lem:iota-action-linear-system-delPezzo}, the smooth ramification curve \(R\subset S\) gives an isolated fixed point of \(\iota\) on \(|H|\), and the fibre over \(R\) is the Jacobian \(\pi^{-1}(R)\cong \Pic^0(R)\). On this fibre, the involution is given by~\eqref{eq:tau-Prym-delPezzo}; since \(\iota\) fixes \(R\) pointwise, it reduces to \(\mathscr{L}\mapsto \mathscr{L}^\vee\). Hence the fixed locus in \(\pi^{-1}(R)\) is precisely the finite subgroup \(\Pic^0(R)[2]\), and these are isolated points of \(M^\tau_{\vv}(S,H)\).
\end{proof}

\begin{remark}\label{rmk:fixed-locus-exhaustive} The list of connected components of the fixed locus \(M_{\vv}^\tau(S,H)\) given in Proposition~\ref{prop:fixed-locus-tau-delPezzo} is not \emph{a priori} complete. There may be other components of codimension \(\geq 4\). For \(d=1\), \(M_\vv(S,H)\) is a smooth fourfold of \(\mathrm{K3}^{[2]}\)-type, and by~\cite[Thm.\@ 4.1]{Mongardi2012SymplecticInvolutions} the fixed locus consists of a smooth K3 surface (the relative Prym variety) and 28 isolated points. Here \(\Pic^0(R)[2]\) contains only 16 points, while the other points live over the 12 singular members of the anticanonical pencil \(|-K_T|\). For \(d=2\), so \(\dim M_\vv(S,H)=6\), it is claimed in~\cite[\textsection3]{MarkushevichTikhomirov2007Prymian} that \(M^\tau_\vv(S,H)\) consists of the \(4\)-dimensional component \(P_{\vv}(S,H)\)---containing the 28 isolated singular points of \(M^\tau_\vv(S,H)\) listed in Remark~\ref{rmk:explicit-minimal-polystable-strata-delPezzo}---and the 64 isolated points \(\Pic^0(R)[2]\). It would be interesting to know whether the above list is exhaustive as soon as \(|-K_T|\) is base-point-free.
\end{remark}

\subsection{Induced action on cohomology}
The second special feature of the del Pezzo setting is that the Prym involution acts trivially on the second cohomology; this is Theorem~\ref{thm:intro-trivial-action-cohomology} from the Introduction:

\begin{theorem}\label{thm:trivial-action-cohomology} 
  For \(d\geq 2\), the Prym involution \(\tau\) acts trivially on \(H^2(M_\vv(S, H), \mathbb{Z})\).
\end{theorem}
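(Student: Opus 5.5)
The plan is to reduce to the chamber models on either side of the wall face through \(H\), where Remark~\ref{rmk:tau-chambers-mukai-morphisms} computes \(\tau^\ast\) in terms of the Mukai lattice. Fix a \(\vv\)-generic \(\alpha\) close to \(H\), and let \(\beta\) be its reflection~\eqref{eq:polarisation-reflection-v}. Consider the chamber-to-wall small terminalisations \(\phi_\alpha\colon M_\vv(S,\alpha)\to M_\vv(S,H)\) and \(\phi_\beta\colon M_\vv(S,\beta)\to M_\vv(S,H)\) used in the proof of Proposition~\ref{prop:geometry-Mv-delPezzo}. Since \(\tau\) acts on sheaves and preserves S-equivalence, the isomorphism \(\tau\colon M_\vv(S,\alpha)\iso M_\vv(S,\beta)\) covers \(\tau\) on the wall space:
\[
  \phi_\beta\circ\tau=\tau\circ\phi_\alpha .
\]
Hence
\[
  \phi_\alpha^\ast\tau^\ast x=\tau^\ast\phi_\beta^\ast x \qquad \text{for } x\in H^2(M_\vv(S,H),\mathbb{Z}).
\]
Since \(\phi_\alpha^\ast\) is injective and \(H^2\) is torsion-free, it suffices to show that \(\tau^\ast\phi_\beta^\ast x=\phi_\alpha^\ast x\). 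This is where the diagram~\eqref{eq:action-tau-vperp-Mukai-morphism} enters.

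\textbf{Step 1: a common lattice \(W\).} I will identify a sublattice \(W\subset\vv^\perp\) such that
\[
  \phi_\alpha^\ast H^2(M_\vv(S,H))=\lambda^\alpha_\vv(W)\otimes\mathbb{Q}\cap H^2, \qquad \phi_\beta^\ast H^2(M_\vv(S,H))=\lambda^\beta_\vv(W)\otimes\mathbb{Q}\cap H^2,
\]
with \(\phi_\alpha^\ast x=\lambda_\vv^\alpha(w)\) if and only if \(\phi_\beta^\ast x=\lambda_\vv^\beta(w)\). The image of \(\phi_\alpha^\ast\) is the annihilator of the contracted curves \(\ell_j\) from the proof of Proposition~\ref{prop:geometry-Mv-delPezzo}. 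By Example~\ref{ex:Mukai-Lepotier-extensions}, these are the classes \(\lambda(\mathbf{w})\) with \(\mathbf{w}\cdot\vv(\mathscr{F}_j)=0\) for all wall summands. I therefore take
\[
  W=T(S)\oplus\langle (0,0,1),\,(2,-H,0)\rangle .
\]
A direct check with the Mukai pairing shows that both algebraic generators lie in \(\vv^\perp\) and are orthogonal to all \(\vv(\mathscr{F}_j)=(0,f^\ast e_j,-1)\), and similarly to the type II summands. The rank is \((22-r)+2=14+d\), matching Proposition~\ref{prop:geometry-Mv-delPezzo}.

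The agreement of the two identifications should follow from the normalisation of \(\lambda_\vv\) via the Le Potier construction on the stable locus. The open set \(M^s_\vv(S,H)\) embeds in both chamber models with complement of codimension \(\geq 2\), and both \(\lambda^\alpha_\vv(w)\) and \(\lambda^\beta_\vv(w)\) restrict to the same class \(\tfrac1\rho\lambda_{\mathcal{E}}(w)\) there. This compatibility, giving a well-defined \(\lambda^H\colon W\to H^2(M_\vv(S,H),\mathbb{Q})\), is the step I expect to be the main obstacle. In particular one needs \(H^2\) of \(M_\vv(S,H)\) to inject into \(H^2\) of the stable locus, which I would get from normality together with the codimension-\(\geq 3\) statement used in the proof of Proposition~\ref{prop:geometry-Mv-delPezzo}.

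\textbf{Step 2: \(W\) is fixed by the Mukai-lattice action.} With \(\lambda^H\) in hand, the diagram~\eqref{eq:action-tau-vperp-Mukai-morphism} reduces the claim to showing that
\[
  \psi(\mathbf{w})\coloneqq\iota^\ast(\mathbf{w}^\vee\cdot e^{-H})
\]
fixes \(W\) pointwise. Explicitly,
\[
  \psi(r,c,s)=\bigl(r,\,-\iota^\ast c-rH,\,s+c\cdot H+rd\bigr),
\]
and I check it on each summand.
\begin{itemize}
  \item For \(t\in T(S)\), we have \(\iota^\ast t=-t\) because \(\iota\) is anti-symplectic and \(T(S)=L_+^\perp\), and \(t\cdot H=0\). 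Hence \(\psi(0,t,0)=(0,t,0)\).
  \item For the algebraic generators, a direct computation gives \(\psi(0,0,1)=(0,0,1)\) and \(\psi(2,-H,0)=(2,H-2H,-2d+2d)=(2,-H,0)\).
\end{itemize}
Thus \(\tau^\ast\) is the identity on \(H^2(M_\vv(S,H),\mathbb{Q})\), and therefore on the torsion-free integral lattice.

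\textbf{Remaining cases.} For \(d=9\) there is no wall, and the argument applies directly with \(\alpha=\beta=H\) and \(W=\vv^\perp\) of rank \(23\). The required check is that \(\psi\) fixes all of \(\vv^\perp\): the algebraic part of \(\vv^\perp\) is spanned by \((0,0,1)\) and \((2,-H,0)\), since \(\NS(S)=\mathbb{Z}f^\ast h\) is spanned by \(H\) up to a rational multiple, and the transcendental part is fixed as above. For \(T\cong\mathbb{P}^1\times\mathbb{P}^1\) the same argument works with the single wall and the summands \((0,f^\ast f_i,-2)\).
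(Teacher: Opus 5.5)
Your proof is correct, but it takes a genuinely different route from the paper's.

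The paper's argument is Hodge-theoretic and never compares chamber models. Since $\tau$ is symplectic, the $(-1)$-eigenspace of $\tau^\ast$ on $H^2(M_\vv(S,H),\mathbb{Q})$ is a rational Hodge substructure with no $(2,0)$-part, so it lies in $\NS(M_\vv(S,H))_{\mathbb{Q}}$. This space has rank $\rho=2$ and is spanned by two $\tau$-invariant classes: the Lagrangian fibration class and an averaged ample class. Hence the eigenspace vanishes.

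You instead compute $\tau^\ast$ explicitly through the functoriality diagram of Remark~\ref{rmk:tau-chambers-mukai-morphisms}, after placing $\phi_\alpha^\ast H^2(M_\vv(S,H),\mathbb{Q})$ inside $\lambda_\vv^\alpha(W_{\mathbb{Q}})$. Your checks on $W$ are all correct: it is orthogonal to the wall summands, it has rank $14+d$, and your $\psi$ restricts to the identity on $W$. Only the inclusion is needed, and it follows from three inputs:
\begin{itemize}
\item the projection formula;
\item Example~\ref{ex:Mukai-Lepotier-extensions};
\item the independence on $\vv^\perp$ of the functionals $\mathbf{w}\mapsto\mathbf{w}\cdot(0,f^\ast e_j,-1)$.
\end{itemize}

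The step you flag as the main obstacle is the same stable-locus comparison the paper uses in the proof of Proposition~\ref{prop:geometry-X}. The injectivity that actually closes it is that of $H^2(M_\vv(S,\beta),\mathbb{Q})\to H^2(\phi_\beta^{-1}(M^s_\vv(S,H)),\mathbb{Q})$; you phrase it as injectivity from $M_\vv(S,H)$, which is not quite what is needed. In the blow-up cases this injectivity is immediate, since the chamber model is smooth and the exceptional locus has codimension $\geq 3$. In the quadric case the chamber model is singular, so it needs one more sentence, for example rational singularities plus the negativity lemma.

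As for what each route buys:
\begin{itemize}
\item The paper's route is shorter and uniform in $d$. It is insensitive to sign conventions for anti-autoequivalences and to cross-chamber normalisations. Its only real input is $\rho(M_\vv(S,H))=2$, which rests on the same intersection matrix you use.
\item Your route is more informative. It realises $H^2(M_\vv(S,H),\mathbb{Q})$ as the explicit piece $W_{\mathbb{Q}}$ of the Mukai lattice on which $\psi=+1$, complementary to the directions $(0,\delta,0)$, $\delta\in H^\perp$, on which $\psi=-1$. That is exactly the computation behind the $\mathbb{Q}$-factoriality argument. For $d=9$ it reduces to the one-line check that $\psi$ fixes all of $\vv^\perp$.
\end{itemize}
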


The existence of such symplectic automorphisms acting trivially on cohomology is useful for two reasons. First, to the best of our knowledge, it had not previously been recorded for singular moduli spaces of sheaves, or more generally for singular irreducible symplectic varieties. Second, it is crucial for the quotient construction below: passing to the quotient by \(\tau\) does not kill any degree-\(2\) classes. This is what allows the terminalisations below to retain high \(b_2\). By contrast, for finite symplectic quotients of smooth hyper-K\"ahler manifolds of \(\mathrm{K3}^{[n]}\)-type, faithfulness on \(H^2\) forces a strict drop in the invariant part of the second cohomology; see also Remark~\ref{rmk:trivial-action-cohomology-K3n} and compare with~\cite{BertiniEtAl2025TerminalizationsQuotients}.

\begin{proof}
  By Proposition~\ref{prop:geometry-Mv-delPezzo}, \(M_\vv(S, H)\) is an irreducible symplectic variety. Hence its rational second cohomology carries the pure weight-two Hodge structure recalled in Section~\ref{sec:zoo-symplectic-varieties}. The decomposition into \(\pm 1\)-eigenspaces for \(\tau^\ast\)
  \[
    H^2(M_\vv(S, H),\mathbb{Q})=H^2(M_\vv(S, H),\mathbb{Q})^+ \oplus H^2(M_\vv(S, H),\mathbb{Q})^-
  \]
  is a decomposition by rational Hodge substructures. Since \(\tau\) is symplectic, it acts trivially on \(H^{2,0}(M_\vv(S,H))\), hence the \((-1)\)-eigenspace has no \((2,0)\)- or \((0,2)\)-part. Therefore
  \[
    H^2(M_\vv(S,H),\mathbb{Q})^- \subset H^{1,1}(M_\vv(S,H))\cap H^2(M_\vv(S,H),\mathbb{Q})=\NS(M_\vv(S,H))_{\mathbb{Q}}.
  \]

  On the other hand, \(\tau\) fixes \(\NS(M_\vv(S,H))_{\mathbb{Q}}\) pointwise. Indeed, the support morphism \(\pi\colon M_\vv(S,H)\to |H|\) is \(\tau\)-equivariant by Proposition~\ref{prop:involutions-delPezzo}, hence the class of the Lagrangian fibration is \(\tau\)-invariant. Averaging any ample class gives a \(\tau\)-invariant ample class. These two classes are linearly independent. As \(\rho(M_\vv(S,H))=2\), they span \(\NS(M_\vv(S,H))_{\mathbb{Q}}\). Hence \(H^2(M_\vv(S,H),\mathbb{Q})^-=0\), and \(\tau\) acts trivially on \(H^2(M_\vv(S,H),\mathbb{Q})\). Since \(H^2(M_\vv(S,H),\mathbb{Z})\) is torsion-free for irreducible symplectic varieties, it follows that \(\tau\) acts trivially on \(H^2(M_\vv(S,H),\mathbb{Z})\).
\end{proof}

\begin{remark}\label{rmk:trivial-action-cohomology-K3n}
  The assumption \(d\geq 2\) is essential. For \(d=1\), Proposition~\ref{prop:geometry-Mv-delPezzo} shows that \(M_\vv(S,H)\) is a smooth hyper-K\"ahler fourfold of \(\mathrm{K3}^{[2]}\)-type, and more generally the natural representation
  \[
    \mathrm{Aut}(Y)\to \mathrm{GL}(H^2(Y,\mathbb{Z}))
  \]
  is faithful for every hyper-K\"ahler manifold \(Y\) of \(\mathrm{K3}^{[n]}\)-type by~\cite[Lemma 1.2]{Mongardi2013NaturalDeformations}. In particular, the phenomenon of Theorem~\ref{thm:trivial-action-cohomology} cannot occur in the smooth \(\mathrm{K3}^{[n]}\)-type case. On the other hand, faithfulness on the second cohomology fails for generalised Kummer varieties and for manifolds of \(\mathrm{OG}6\)-type: the kernel for a generalised Kummer variety was computed in~\cite[Thm.\@ 3, Cor.\@ 5(2)]{BoissiereNieperWisskirchenSarti2011HigherDimensionalEnriques}, while for \(\mathrm{OG}6\)-type manifolds it is isomorphic to \((\mathbb{Z}/2\mathbb{Z})^8\) by~\cite{MongardiWandel2017AutomorphismsOGrady}. 
  
  Floccari's hyper-Kummer construction of finite quotients of hyper-K\"ahler sixfolds of \(\mathrm{Kum}^3\)-type admitting crepant resolutions also involves a finite group of symplectic automorphisms acting trivially on \(H^2(K,\mathbb{Z})\)~\cite[Thm.\@ 1.1]{Floccari2024Sixfolds}. Thus the two main features of our examples, namely codimension-\(2\) components in the fixed locus, which allow for a non-trivial terminalisation, and the trivial action on the second cohomology, also occur in their setting.
\end{remark}

\section{The quotient and its terminalisation}\label{sec:terminalisation-quotient}

We now describe the quotient of \(M_\vv(S,H)\) by the Prym involution \(\tau\) and its terminal models. The two facts proved above, namely the existence of the codimension-\(2\) Prym component \(P_{\vv}(S,H)\) in the fixed locus and the trivial action on second cohomology, control both the generic singularities of the quotient and the second Betti number of its terminalisations.

\subsection{Global geometry of the quotient}

We write \(X\coloneqq M_{\vv}(S,H)/\langle \tau \rangle\) for the quotient by the symplectic involution \(\tau\), and \(q\colon M_{\vv}(S,H)\to X\) for the quotient morphism. By Proposition~\ref{prop:involutions-delPezzo}, the Lagrangian fibration \(\pi\colon M_{\vv}(S,H)\to |H|\) induces a morphism \(\pi\colon X \to |H|/\langle \iota \rangle\), and by Lemma~\ref{lem:iota-action-linear-system-delPezzo}, the quotient \(|H|/\langle \iota \rangle\) is a weighted projective space \(\mathbb{P}(1^{d+1}, 2)\).

\begin{proposition}
\label{prop:geometry-X}
  The quotient \(X\) is a \(\mathbb{Q}\)-factorial irreducible symplectic variety of dimension \(2d+2\) with 
  \[ b_2(X) = 14+d, \quad \rho(X)= 2. \]
  It is endowed with a Lagrangian fibration to a weighted projective space
  \begin{equation*}
    \pi\colon X\to \mathbb{P}(1^{d+1}, 2).
  \end{equation*}
  Its reduced singular locus contains a unique irreducible component \(q(P_{\vv}(S,H))\) of codimension \(2\), lying over the smooth divisor \(f^\ast|-K_T| \cong \mathbb{P}^d \subset \mathbb{P}(1^{d+1}, 2)\), and at least \(2^{2d+2}\) isolated points lying over the singular point of \(\mathbb{P}(1^{d+1}, 2)\).
\end{proposition}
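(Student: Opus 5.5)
The plan is to deduce each assertion of Proposition~\ref{prop:geometry-X} from the corresponding property of \(M_\vv(S,H)\) by taking \(\tau\)-invariants. The Lagrangian fibration, \(b_2\), \(\rho\) and the singular locus follow more or less formally. The real input is \(\mathbb{Q}\)-factoriality, because \(M_\vv(S,H)\) itself is not \(\mathbb{Q}\)-factorial for \(2\leq d\leq 8\).

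\emph{Symplectic structure and fibration.} By Proposition~\ref{prop:geometry-Mv-delPezzo}, \(M_\vv(S,H)\) is an irreducible symplectic variety, and \(\tau\) preserves its symplectic form by Proposition~\ref{prop:involutions-delPezzo}. Proposition~\ref{prop:criteria-ISV}(i) then shows directly that \(X\) is irreducible symplectic of dimension \(2d+2\). The fibration is the descent of \(\pi\), which is \(\tau\)/\(\iota\)-equivariant. By Lemma~\ref{lem:iota-action-linear-system-delPezzo}, \(\iota\) acts on \(H^0(S,\mathscr{O}_S(H))\) with weights \((1^{d+1},-1)\), so \(|H|/\langle\iota\rangle\cong\mathbb{P}(1^{d+1},2)\). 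Under this identification, the fixed hyperplane \(f^\ast|-K_T|\) maps to a smooth divisor \(\cong\mathbb{P}^d\), and the point \([R]\) maps to the singular point. The fibres stay Lagrangian because \(q\) is finite and symplectic.

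\emph{Cohomology.} We have \(H^2(X,\mathbb{Q})=H^2(M_\vv(S,H),\mathbb{Q})^\tau\), and likewise for the Néron--Severi groups.
\begin{itemize}
\item For \(2\leq d\leq 8\), Theorem~\ref{thm:trivial-action-cohomology} together with Proposition~\ref{prop:geometry-Mv-delPezzo} gives \(b_2=14+d\) and \(\rho=2\).
\item For \(d=9\), the proof of Theorem~\ref{thm:trivial-action-cohomology} applies verbatim, since \(\rho=2\). This gives \(b_2=23=14+9\).
\item For \(d=1\), we compute the invariants using the diagram~\eqref{eq:action-tau-vperp-Mukai-morphism}, where \(\alpha=\beta=H\) because there are no walls. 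The action \(\mathbf{w}\mapsto -\iota^\ast(\mathbf{w}^\vee e^{-H})\) on \(\vv^\perp\) is \(+1\) on \(T(S)\), since both \(\iota^\ast\) and the overall sign contribute \(-1\) there. On \(\vv^\perp\cap\widetilde H_{\mathrm{alg}}\), it acts as the reflection of Corollary~\ref{cor:duality-reflection-delPezzo}, which is \(-1\) on the \(8\)-dimensional \(H^\perp\subset\NS(S)\). Hence the invariant rank is \(23-8=15\) and the invariant Néron--Severi rank is \(10-8=2\).
\end{itemize}
Torsion is irrelevant since we only need ranks.

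\emph{\(\mathbb{Q}\)-factoriality (main obstacle).} For \(d=1,9\), \(M_\vv(S,H)\) is \(\mathbb{Q}\)-factorial and finite quotients preserve this. For \(2\leq d\leq 8\), we use that \(\Cl(X)_\mathbb{Q}=\Cl(M_\vv(S,H))_\mathbb{Q}^\tau\). Pick \(H^+\) near \(H\) with \(\beta\) as in Corollary~\ref{cor:duality-reflection-delPezzo}. The small contractions \(\phi^\pm\colon M_\vv(S,H^+),M_\vv(S,\beta)\to M_\vv(S,H)\) identify \(\Cl(M_\vv(S,H))_\mathbb{Q}\) with \(\NS(M_\vv(S,H^+))_\mathbb{Q}\cong(\vv^\perp\cap\widetilde H_{\mathrm{alg}})_\mathbb{Q}\). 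Via Remark~\ref{rmk:tau-chambers-mukai-morphisms}, \(\tau\) acts on this space by the reflection that is \(-1\) on \(H^\perp_\mathbb{R}\). Its invariants then have rank \(2\), which equals \(\rho(M_\vv(S,H))\). Hence every \(\tau\)-invariant Weil divisor class is \(\mathbb{Q}\)-Cartier on \(M_\vv(S,H)\), and therefore on \(X\). The delicate point is checking that the transport of the Mukai identification through the two chamber models is compatible with \(\tau\) on Weil divisors. I would verify this by restricting to the common open set \(M^s_\vv(S,H)\), whose complement has codimension at least \(2\).

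\emph{Singular locus.} We have \(X^{\mathrm{sing}}=q(M^{\mathrm{sing}}_\vv\cup M^\tau_\vv)\). Every fixed component of the symplectic involution has even codimension, and by Proposition~\ref{prop:fixed-locus-tau-delPezzo}, \(P_\vv(S,H)\) is the unique one of codimension \(2\) and contains \(M^{\mathrm{sing}}_\vv\). At a general smooth point of \(P_\vv\), the local model is \(\mathbb{C}^{2d}\times(\mathbb{C}^2/\pm1)\), so \(q(P_\vv)\) is a codimension-\(2\) singular component lying over \(f^\ast|-K_T|\). The \(2^{2d+2}\) points of \(\Pic^0(R)[2]\) are isolated smooth fixed points, so their images are singularities of type \(\mathbb{C}^{2d+2}/\pm1\) over the singular point of \(\mathbb{P}(1^{d+1},2)\).
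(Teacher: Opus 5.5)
Your proposal is correct and follows essentially the paper's proof: the same descent of the symplectic structure and fibration, Theorem~\ref{thm:trivial-action-cohomology} for \(d\geq 2\), \(\mathbb{Q}\)-factoriality via the chamber models \(M_\vv(S,H^\pm)\) and the Mukai-morphism diagram~\eqref{eq:action-tau-vperp-Mukai-morphism} compared on \(M^s_\vv(S,H)\), and the same local analysis of the singular locus. There are only minor variations: you count the rank-two \(\tau\)-invariants of all of \(\Cl(M_\vv(S,H))_\mathbb{Q}\), treat the quadric uniformly and get \(b_2=15\) for \(d=1\) from the diagram, whereas the paper shows \(\tau=-\mathrm{id}\) on \(\Cl/\Pic\), swaps the two relative nef cones for \(\mathbb{P}^1\times\mathbb{P}^1\) and cites Mongardi for \(b_2\). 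One small slip for \(d=1\): the action on \(H^2\) through \(\lambda_\vv\) is \(\mathbf{w}\mapsto\iota^\ast(\mathbf{w}^\vee\cdot e^{-H})\), without the extra minus sign, which belongs to the action on Mukai vectors of objects, and it is \(+1\) on \(T(S)\) because the dual and \(\iota^\ast\) each contribute \(-1\) there; your formula as written would give \(-1\) on \(T(S)\) and \(+1\) on \((0,H^\perp,0)\), although the eigenvalues you actually use, and hence \(b_2=15\) and \(\rho=2\), are correct.
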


In particular, note that the base $|H|/\langle \iota \rangle$ is a quotient of \(\mathbb{P}^{d+1}\), as predicted by~\cite[Conj.\@ 1.4]{LiuLiuXu2025IrreducibleSymplecticVarieties} for Lagrangian fibrations on projective irreducible symplectic varieties.

As in Remark~\ref{rmk:fixed-locus-exhaustive}, we do not claim that the above description exhausts the singular locus of \(X\). This does not affect the computation of \(b_2\) for a \(\mathbb{Q}\)-factorial terminalisation, since only codimension-\(2\) components contribute exceptional divisors. The isolated quotient singularities listed above will be used only to ensure that the terminalisation is not smooth. 

\begin{proof} 
  Since \(M_{\vv}(S,H)\) is an irreducible symplectic variety by Proposition~\ref{prop:geometry-Mv-delPezzo} and \(\tau\) is symplectic, the finite quotient \(X\) is again an irreducible symplectic variety by Proposition~\ref{prop:criteria-ISV}. The equivariance of the support morphism in Proposition~\ref{prop:involutions-delPezzo} gives the Lagrangian fibration
  \[
    \pi\colon X\to |H|/\langle\iota\rangle\cong \mathbb{P}(1^{d+1},2).
  \]

  We next prove \(\mathbb{Q}\)-factoriality. If \(d=1\), then \(M_{\vv}(S,H)\) is smooth; if \(d=9\), then \(M_{\vv}(S,H)\) is locally factorial by Proposition~\ref{prop:geometry-Mv-delPezzo}. In both cases the finite quotient is \(\mathbb{Q}\)-factorial. Assume therefore that \(2\leq d\leq 8\). By~\cite[Lemma 1.1.3]{Prokhorov2021EquivariantMMP}, it suffices to show that the injective linear map
  \[
    \Pic(M_{\vv}(S,H))_{\mathbb{Q}}^{\tau}\to \Cl(M_{\vv}(S,H))_{\mathbb{Q}}^{\tau}
  \]
  is an isomorphism, or equivalently that
  \begin{equation}\label{eq:Q-factoriality-criterion-delPezzo}
    \left(\Cl(M_{\vv}(S,H))_{\mathbb{Q}}/\Pic(M_{\vv}(S,H))_{\mathbb{Q}}\right)^\tau=0.
  \end{equation}
  Since the action of \(\tau\) is defined over \(\mathbb{Q}\), it is enough to prove the corresponding statement after tensoring with \(\mathbb{R}\).

  As in the proof of Proposition~\ref{prop:geometry-Mv-delPezzo}, choose a \(\vv\)-generic polarisation \(H^+\) sufficiently close to \(H\) so that
  \[
    H^-\coloneqq 2\frac{H^+\cdot H}{H^2}H-H^+\in \mathrm{NS}(S)_\mathbb{R}
  \]
  is ample, and hence a \(\vv\)-generic polarisation in the reflected adjacent chamber.
  Let
  \[
    \phi_\pm\colon M_{\vv}(S,H^\pm)\to M_{\vv}(S,H)
  \]
  be the corresponding small crepant chamber-to-wall contractions. By Corollary~\ref{cor:duality-reflection-delPezzo}, \(\tau\) induces an isomorphism \(\tau\colon M_{\vv}(S,H^+)\iso M_{\vv}(S,H^-)\), compatible with the involution of \(M_{\vv}(S,H)\). Since \(\phi_\pm\) are small, strict transform gives isomorphisms
  \begin{equation}\label{eq:tau-equivariant-isomorphisms-relative-NS-delPezzo}
    \phi_\pm^\ast \colon
    \left(\Cl(M_{\vv}(S,H))_{\mathbb{Q}}/\Pic(M_{\vv}(S,H))_{\mathbb{Q}}\right)\otimes_{\mathbb{Q}}\mathbb{R}
    \iso N^1(M_{\vv}(S,H^\pm)/M_{\vv}(S,H)),
  \end{equation}
  compatible with the action of \(\tau\).

  If \(T\cong \mathbb{P}^1\times \mathbb{P}^1\), there is only one wall, and the relative N\'eron--Severi spaces are one-dimensional. Since \(\tau\) swaps the relative nef cones of \(M_\vv(S, H^\pm)\), it acts as \(-1\) on
  \[
    \left(\Cl(M_{\vv}(S,H))_{\mathbb{Q}}/\Pic(M_{\vv}(S,H))_{\mathbb{Q}}\right)\otimes_{\mathbb{Q}}\mathbb{R},
  \]
  and we deduce~\eqref{eq:Q-factoriality-criterion-delPezzo} in this case. 
  
  Assume now that \(T\cong \mathrm{Bl}_{r-1}\mathbb{P}^2\), with \(r=10-d\) and \(2\leq d\leq 8\). In these cases \(\vv\in \widetilde{H}_\mathrm{alg}(S, \mathbb{Z})\) is primitive, so the morphisms \(\phi_\pm\) are crepant resolutions, but \(H\) lies on the intersection of several $\mathbf{v}$-walls. Consider the Mukai morphisms
  \[ \lambda_\vv^\pm\colon \vv^\perp \iso H^2(M_{\vv}(S,H^\pm), \mathbb{Z}), \]
  as in \textsection\ref{subsec:2nd-cohomology-moduli-sheaves}. In the proof of Proposition~\ref{prop:geometry-Mv-delPezzo} we showed that \(\lambda_\vv^\pm\) induce natural isomorphisms
  \[
    \mathrm{NS}(S)_\mathbb{R} \supset H^\perp_{\mathbb{R}}\iso N^1(M_{\vv}(S,H^\pm)/M_{\vv}(S,H)), \qquad
    \delta\mapsto \lambda_{\vv}^\pm(0,\delta,0).
  \]
  It remains to compare these identifications under \(\tau\). We claim that, for every \(\delta\in H^\perp_{\mathbb{R}}\),
  \[
    \tau^\ast\lambda_{\vv}^-(0,\delta,0)=-\lambda_{\vv}^+(0,\delta,0).
  \] 
  Equivalently, the following diagram commutes:
  \[
    \begin{tikzcd}
      H^\perp_{\mathbb{R}} \arrow[r, "\sim"] \arrow[d, "-\mathrm{id}"] & N^1(M_{\vv}(S,H^-)/M_{\vv}(S,H)) \arrow[d, "\tau^\ast"] \\
      H^\perp_{\mathbb{R}} \arrow[r, "\sim"] & N^1(M_{\vv}(S,H^+)/M_{\vv}(S,H))
    \end{tikzcd}
   \]
  By Remark~\ref{rmk:tau-chambers-mukai-morphisms}, applied with \(\alpha=H^+\) and \(\beta=H^-\) and extended by linearity to \(\vv^\perp_{\mathbb{R}}\), we have
  \[
    \tau^\ast\lambda_{\vv}^-(\mathbf{w})=\lambda_{\vv}^+\left(\iota^\ast(\mathbf{w}^\vee\cdot e^{-H})\right),
    \qquad \mathbf{w}\in \vv^\perp_{\mathbb{R}}.
  \]
  For \(\mathbf{w}=(0,\delta,0)\), with \(\delta\in H^\perp_{\mathbb{R}}\), we have \(\iota^\ast\delta=\delta\) and \(\delta\cdot H=0\). Hence
  \[
    \iota^\ast(\mathbf{w}^\vee\cdot e^{-H})=(0,-\delta,0),
  \]
  proving the claim. By the normalisation of the Mukai morphisms in \textsection\ref{subsec:2nd-cohomology-moduli-sheaves}, the classes \(\lambda_\vv^\pm(0,\delta,0)\) agree on the common open subset \(M_\vv^s(S,H)\), and hence correspond under strict transform. Together with the $\tau$-equivariance of~\eqref{eq:tau-equivariant-isomorphisms-relative-NS-delPezzo}, this shows that \(\tau\) acts as \(-\mathrm{id}\) on the \(\mathbb{Q}\)-factoriality defect space
  \[
    \left(\Cl(M_{\vv}(S,H))_{\mathbb{Q}}/\Pic(M_{\vv}(S,H))_{\mathbb{Q}}\right)\otimes_{\mathbb{Q}}\mathbb{R},
  \]
  so~\eqref{eq:Q-factoriality-criterion-delPezzo} holds.

  \smallskip
  We turn to Betti and Picard numbers. For \(d\geq 2\), Theorem~\ref{thm:trivial-action-cohomology} and Proposition~\ref{prop:geometry-Mv-delPezzo} give
  \[
    b_2(X)=b_2(M_{\vv}(S,H))=14+d,\qquad \rho(X)=\rho(M_{\vv}(S,H))=2.
  \]
  For \(d=1\), the description of symplectic involutions on manifolds of \(\mathrm{K3}^{[2]}\)-type~\cite[Thm.\@ 4.1]{Mongardi2012SymplecticInvolutions} gives \(\rk H^2(M_\vv(S,H),\mathbb{Q})^\tau=15\), hence \(b_2(X)=15\). For the Picard number, pull-back by \(q\) gives
  \[
    q^\ast\NS(X)_{\mathbb{Q}}=\NS(M_\vv(S,H))_{\mathbb{Q}}^\tau.
  \]
  It remains to compute the invariant algebraic classes upstairs. Since there are no \(\vv\)-walls, we may apply Remark~\ref{rmk:tau-chambers-mukai-morphisms} with \(\alpha=\beta=H\). For \(\mathbf{w}=(r,c,s)\in \vv^\perp\cap\widetilde H_{\mathrm{alg}}(S,\mathbb{Z})\), the action induced through \(\lambda_\vv\) as in~\eqref{eq:action-tau-vperp-Mukai-morphism} fixes \(\mathbf{w}\) if and only if \(r=-c\cdot H\) and \(2c=(c\cdot H)H\). Since \(H\) is primitive in \(\NS(S)\), the fixed integral algebraic sublattice is given by
  \[
    \mathbb{Z}(-2,H,0)\oplus \mathbb{Z}(0,0,1),
  \]
  hence \(\rho(X)=2\).

  \smallskip
  Finally, we describe the singularities. Since \(M_{\vv}(S,H)^{\mathrm{sing}}\) has codimension at least \(6\) for \(d\geq 2\), and is empty for \(d=1\), the codimension-\(2\) singularities of the quotient arise from the codimension-\(2\) fixed locus of \(\tau\). By Proposition~\ref{prop:fixed-locus-tau-delPezzo}, this has a unique irreducible component, namely the relative Prym component \(P_{\vv}(S,H)\), lying over \(f^\ast|-K_T|\). At a general point \(p\in P_{\vv}(S,H)\), the moduli space \(M_{\vv}(S,H)\) is smooth, and the fixed component is smooth of codimension \(2\). Thus, in \'etale local coordinates, the action of \(\tau\) is trivial along \(T_pP_{\vv}(S,H)\) and acts as \(-\mathrm{id}\) on the two-dimensional normal space. The quotient is therefore locally, up to a smooth factor, the surface singularity \(\mathbb{A}^2/\{\pm1\}\), i.e.\@ an \(A_1\)-singularity. Hence \(q(P_{\vv}(S,H))\) is the unique codimension-\(2\) component claimed above. Moreover, the isolated fixed points \(\Pic^0(R)[2]\) over the ramification curve \(R\in |H|\) give isolated quotient singularities over the singular point of \(|H|/\langle\iota\rangle\).
\end{proof}

\subsection{Terminal models of the quotient}

Set \(\Sigma\coloneqq q(P_{\vv}(S,H))\). Along the general point of \(\Sigma\), the quotient has a transverse \(A_1\)-singularity. Since this is the only codimension-\(2\) component of \(X^{\mathrm{sing}}\), any \(\mathbb{Q}\)-factorial terminalisation has a single exceptional divisor over \(\Sigma\); the isolated quotient singularities remain terminal and ensure that the resulting space is still singular. The following result is the precise form of Theorem~\ref{thm:intro-main-examples} from the Introduction.

\begin{theorem}
  \label{thm:terminalisation-of-X}
  There is a unique \(\mathbb{Q}\)-factorial terminalisation \(\phi\colon \widetilde{X}\to X\)
  up to isomorphism over \(X\). Moreover, \(\widetilde{X}\) is an irreducible symplectic variety of dimension \(2d+2\) with
  \[ b_2(\widetilde{X})=15+d, \quad \rho(\widetilde{X})=3. \]
  It is endowed with a Lagrangian fibration to a weighted projective space
  \begin{equation*}
    \pi\circ\phi\colon \widetilde{X} \to \mathbb{P}(1^{d+1}, 2).
  \end{equation*}
  Its singular locus is non-empty and contains at least \(2^{2d+2}\) isolated points lying over the singular point of \(\mathbb{P}(1^{d+1}, 2)\).
\end{theorem}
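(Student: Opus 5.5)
Since \(X\) is a projective symplectic variety, hence has canonical singularities, a \(\mathbb{Q}\)-factorial terminalisation \(\phi\colon \widetilde X\to X\) exists by~\cite[Cor.\@ 1.4.3]{BirkarEtAl2010MinimalModelsLogGeneralType}. The plan is to pin down its exceptional divisors, read off the invariants, and then deduce uniqueness from the shape of the relative movable cone. \(\phi\) is crepant, so every exceptional divisor \(E\) has discrepancy \(a(E,X)=0\), and its centre must lie in the non-terminal locus. For symplectic varieties, the terminal ones are exactly those with singular locus of codimension \(\geq 4\) (Namikawa). Hence the centre of \(E\) is a codimension-\(2\) component of \(X^{\mathrm{sing}}\), which by Proposition~\ref{prop:geometry-X} is \(\Sigma=q(P_\vv(S,H))\). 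Along a general point of \(\Sigma\) the singularity is a transverse \(A_1\), whose minimal resolution has a single crepant exceptional curve, so exactly one crepant divisor \(E\) lies over \(\Sigma\). It follows that \(\phi\) extracts precisely \(E\). Since \(X\) is \(\mathbb{Q}\)-factorial, we get \(\rho(\widetilde X)=\rho(X)+1=3\).

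For \(b_2\), we argue as in the proof of Proposition~\ref{prop:geometry-Mv-delPezzo}. The pull-back \(\phi^\ast\colon H^2(X,\mathbb{Q})\to H^2(\widetilde X,\mathbb{Q})\) is injective and an isomorphism on transcendental parts~\cite[Lemma 2.1]{BakkerLehn2021GlobalTorelliSingularSymplectic}, and its cokernel is algebraic by~\cite[Lemma 5.21]{BakkerLehn2022GlobalModuli}. The cokernel therefore has dimension \(\rho(\widetilde X)-\rho(X)=1\), spanned by \([E]\). Combined with \(b_2(X)=14+d\), this gives \(b_2(\widetilde X)=15+d\). For irreducibility, Proposition~\ref{prop:criteria-ISV}(i) says terminalisations of irreducible symplectic varieties are irreducible symplectic, and Proposition~\ref{prop:geometry-X} gives that \(X\) is one. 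The Lagrangian fibration is simply the composite \(\pi\circ\phi\). A general fibre of \(\pi\circ\phi\) is unchanged, since \(\phi\) is an isomorphism over the complement of \(\Sigma\), and \(\Sigma\) maps to a divisor of the base.

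For the singularities, note that over the isolated quotient points coming from \(\Pic^0(R)[2]\), \(X\) is locally \(\mathbb{C}^{2d+2}/\{\pm1\}\) (away from the singular locus of \(M_\vv(S,H)\), which lies over \(f^\ast|-K_T|\) rather than over \(R\)). This singularity is terminal when \(2d+2\geq 4\), since the age is \((2d+2)/2>1\). There are at least \(2^{2d+2}=|\Pic^0(R)[2]|\) such points. They are disjoint from \(\Sigma\), so \(\phi\) is an isomorphism near them, and they persist as isolated singularities of \(\widetilde X\).

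The main obstacle is uniqueness. Any two \(\mathbb{Q}\)-factorial terminalisations are crepant, both extract only \(E\), and are isomorphic in codimension one, so they are connected by flops over \(X\) and correspond to chambers of the relative movable cone \(\mathrm{Mov}(\widetilde X/X)\). Here \(N^1(\widetilde X/X)\) is one-dimensional, spanned by \([E]\). In this direction \(-E\) is \(\phi\)-nef, while \(E\) is not movable because it is an exceptional divisor. So the relative movable cone is a single ray and has exactly one chamber, which gives uniqueness. Concretely, I would check that a second model \(\widetilde X'\) is the relative \(\operatorname{Proj}\) over \(X\) of \(\bigoplus_m \phi_\ast\mathscr{O}(-mE)\), and hence is determined by the divisorial valuation of \(E\) alone.
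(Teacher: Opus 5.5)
Your outline follows the paper closely: existence via BCHM, counting exceptional divisors, $\rho(\widetilde X/X)=1$, Bakker--Lehn for $b_2$, absence of flops for uniqueness, and terminality of $\mathbb{A}^{2d+2}/\{\pm1\}$. However, the step that counts the exceptional divisors has a gap.

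From $a(F,X)=0$ and Namikawa's criterion you only get that the centre $Z$ of each $\phi$-exceptional divisor $F$ lies in the non-terminal locus of $X$, i.e.\ $Z\subseteq\Sigma$. It does not follow that $Z$ \emph{is} a codimension-$2$ component. Consider a crepant divisor whose centre is a proper closed subset of $\Sigma$, for instance one lying over a deeper stratum such as $q(M_\vv(S,H)^{\mathrm{sing}})\subset\Sigma$, where the local model is no longer a transverse $A_1$. Namikawa's criterion does not exclude it: $X$ is already non-terminal at those points simply because $\Sigma$ passes through them. Your transverse-$A_1$ count at a general point of $\Sigma$ only detects divisors dominating $\Sigma$. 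An extra divisor of this kind would raise both $\rho(\widetilde X/X)$ and $b_2(\widetilde X)$. It would also break the one-dimensionality of $N^1(\widetilde X/X)$, on which your uniqueness argument depends.

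The missing ingredient is semismallness of crepant contractions onto symplectic varieties, which is what the paper uses via \cite[Prop.\@ 2.16]{Tighe2025LooijengaLuntsVerbitskyAlgebra}. Let $F$ be exceptional with centre $Z$ of codimension $c\geq 2$. Then the fibres of $F\to Z$ over general points of $Z$ have dimension $c-1$. Semismallness forces $2(c-1)\leq c$, so $c=2$ and $Z=\Sigma$. Only then does your transverse-$A_1$ count show that $\phi$ extracts exactly one divisor.

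With this inserted, the rest of your argument goes through. Your uniqueness argument shows that $-E$ is $\phi$-ample and that $\widetilde X\cong\Proj_X\bigoplus_m\phi_\ast\mathscr{O}_{\widetilde X}(-mE)$, an algebra that depends only on the valuation of $E$. This is a more explicit version of the paper's appeal to Kawamata's theorem that crepant terminal models are connected by flops.
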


\begin{proof}
  By Proposition~\ref{prop:geometry-X}, the quotient \(X\) is an irreducible symplectic variety of dimension \(2d+2\), hence has canonical singularities. Let \(\phi\colon \widetilde{X}\to X\) be a \(\mathbb{Q}\)-factorial terminalisation of \(X\)~\cite[Cor.\@ 1.4.3]{BirkarEtAl2010MinimalModelsLogGeneralType}. Since \(\phi\) is crepant, \(\widetilde{X}\) is again an irreducible symplectic variety by Proposition~\ref{prop:criteria-ISV}. The Lagrangian fibration on \(X\) pulls back to
  \[
    \pi\circ\phi\colon \widetilde{X}\to \mathbb{P}(1^{d+1},2).
  \]
  Since \(\phi_\ast\mathscr{O}_{\widetilde{X}}\cong \mathscr{O}_X\), the fibres remain connected. Over a general point of the complement of \(f^\ast|-K_T|\cong \mathbb{P}^d\subset\mathbb{P}(1^{d+1},2)\), the morphism \(\phi\) is an isomorphism on the fibre, so the fibre coincides with the corresponding Lagrangian fibre of \(X\). Hence the pulled-back fibration is again Lagrangian.

  We compute the exceptional contribution. By Proposition~\ref{prop:geometry-X}, the quotient \(X\) has a unique codimension-\(2\) component of the singular locus, namely \(\Sigma\). At a general point of \(\Sigma\), the germ of \(X\) is, up to a smooth factor, the surface singularity \(\mathbb{A}^2/\{\pm1\}\). The unique terminalisation of this transverse \(A_1\)-singularity extracts exactly one divisor. By semismallness of \(\phi\)~\cite[Prop.\@ 2.16]{Tighe2025LooijengaLuntsVerbitskyAlgebra}, every exceptional divisor of \(\phi\) must dominate a codimension-\(2\) component of \(X^{\mathrm{sing}}\). Thus \(\phi\) has precisely one irreducible exceptional divisor, say \(E\). Since \(X\) and \(\widetilde{X}\) are \(\mathbb{Q}\)-factorial, the classes of exceptional prime divisors form a basis of \(N^1(\widetilde{X}/X)\)~\cite[Lem.\@ 3.39]{KollarMori1998BirationalGeometry}. Hence
  \[
    \rho(\widetilde{X}/X)=1.
  \]
  The relative N\'eron--Severi exact sequence, together with \(\rho(X)=2\) by Proposition~\ref{prop:geometry-X}, then gives
  \[
    \rho(\widetilde{X})=\rho(X)+\rho(\widetilde{X}/X)=3.
  \]

  Furthermore, the pull-back \(\phi^\ast\colon H^2(X,\mathbb{Q})\to H^2(\widetilde{X},\mathbb{Q})\) is injective and induces an isomorphism on transcendental lattices~\cite[Lemma 2.1]{BakkerLehn2021GlobalTorelliSingularSymplectic}. Its cokernel is therefore algebraic~\cite[Lemma 5.21]{BakkerLehn2022GlobalModuli} and generated by the class of the unique exceptional divisor \(E\), so
  \[
    b_2(\widetilde{X})=b_2(X)+1=15+d.
  \]

  For uniqueness, any two crepant terminal models are connected over \(X\) by a sequence of flops~\cite[Thm.\@ 1]{Kawamata2008FlopsConnectMinimalModels}. Since \(\rho(\widetilde{X}/X)=1\) and \(\phi\) is divisorial, there is no small contraction over \(X\), and no non-trivial flop can occur. Thus \(\phi\) is unique up to isomorphism over \(X\).

  Finally, consider one of the isolated quotient singularities \(p\in q(\Pic^0(R)[2])\) from Proposition~\ref{prop:geometry-X}. The moduli space \(M_{\vv}(S,H)\) is smooth at the corresponding isolated fixed point, and the linearised action of \(\tau\) on the tangent space is \(-\mathrm{id}\). Thus the germ of \(X\) at \(p\) is
  \[
    (X, p)\cong (\mathbb{A}^{2d+2}/\{\pm1\}, 0).
  \]
  Since \(2d+2\geq 4\), this quotient singularity is terminal. Therefore \(\phi\) is an isomorphism near these points, and they remain isolated singular points of \(\widetilde{X}\). 
\end{proof}

The proof also identifies the rational transcendental Hodge structure of the terminal model. Via pull-back by the finite quotient map \(q\), one has an isomorphism of rational Hodge structures
\begin{equation}\label{eq:cohomology-quotient-invariant-part}
  q^\ast\colon H^2(X,\mathbb{Q})\iso
  H^2(M_\vv(S,H),\mathbb{Q})^\tau.
\end{equation}
If \(d\geq 2\), the target is all of \(H^2(M_\vv(S,H),\mathbb{Q})\) by Theorem~\ref{thm:trivial-action-cohomology}. If \(d=1\), the equality with the full second cohomology of \(M_\vv(S,H)\) no longer holds. However, the involution \(\tau\) is symplectic, so the anti-invariant part has no \((2,0)\)-part and is therefore algebraic. Since moreover \(q^\ast\NS(X)_{\mathbb{Q}}=\NS(M_\vv(S,H))_{\mathbb{Q}}^\tau\), in both cases \eqref{eq:cohomology-quotient-invariant-part} restricts to an isomorphism
\[
  q^\ast \colon T(X)_{\mathbb{Q}}\iso T(M_\vv(S,H))_{\mathbb{Q}}.
\]
Together with the identification~\eqref{eq:transcendental-lattice-Mv-delPezzo} and the isomorphism on transcendental lattices induced by \(\phi^\ast\) in the proof of Theorem~\ref{thm:terminalisation-of-X}, this yields an isomorphism of rational Hodge structures
\[
  T(S)_{\mathbb{Q}} \cong T(\widetilde{X})_{\mathbb{Q}}.
\]

Note that the corresponding integral statement is much subtler, as $q^\ast\colon H^2(X,\mathbb{Z})\to H^2(M_\vv(S,H),\mathbb{Z})^\tau$ is not necessarily an isomorphism on integral cohomology. 

\begin{remark}[Recovering the terminalisation]
The uniqueness statement shows that the terminalisation is canonically determined as a birational model over \(X\). It is natural to ask whether this model can be recovered directly from the codimension-\(2\) stratum \(\Sigma\subset X^{\mathrm{sing}}\).
Let \(D\subset X^{\mathrm{sing}}\) be the \emph{dissident locus}~\cite{Namikawa2001DeformationSymplecticSingularities}, namely the union of the higher-codimensional singular strata. Set \(U\coloneqq X\setminus D\), and let \(j\colon U\hookrightarrow X\) be the open immersion. On \(U\), the terminalisation is the blow-up of the reduced codimension-\(2\) component of \(X^{\mathrm{sing}}\); see for instance~\cite[Prop.\@ 3.7]{BertiniEtAl2025TerminalizationsQuotients}:
\[
  \widetilde{X}\times_X U\cong \operatorname{Bl}_{\Sigma\cap U}U
  =\Proj_U\bigoplus_{k\geq 0}\mathscr{I}_{\Sigma\cap U}^k.
\]
Following the idea outlined in~\cite[Rem.\@ 4.6]{KaledinLehn2007LocalStructureOGrady}, the terminalisation can be recovered by extending a suitable Veronese of this Rees algebra across \(D\). Indeed, let \(E\subset\widetilde{X}\) be the unique exceptional divisor. Choose \(m>0\) sufficiently divisible and sufficiently large, so that \(mE\) is Cartier, \(-mE\) is sufficiently \(\phi\)-ample, and the corresponding Veronese algebra agrees with the Rees algebra over \(U\). Then
\[
  \widetilde{X}\cong \Proj_X\bigoplus_{k\geq 0}\phi_\ast\mathscr{O}_{\widetilde{X}}(-kmE).
\]
Since \(\phi\) is semismall~\cite[Prop.\@ 2.16]{Tighe2025LooijengaLuntsVerbitskyAlgebra} and \(D\) has codimension at least \(4\), the inverse image \(\phi^{-1}(D)\) has codimension at least \(2\) in \(\widetilde{X}\). Hence sections of \(\mathscr{O}_{\widetilde{X}}(-kmE)\) are determined by their restriction over \(U\). By the choice of \(m\), this gives isomorphisms
\[
  j_\ast\mathscr{I}_{\Sigma\cap U}^{km}\iso \phi_\ast\mathscr{O}_{\widetilde{X}}(-kmE), \quad k\geq 0.
\]
Thus
\[
  \widetilde{X}\cong
  \Proj_X\bigoplus_{k\geq 0}j_\ast\mathscr{I}_{\Sigma\cap U}^{km}.
\]
In this sense the terminalisation is recovered intrinsically from \(X\), \(\Sigma\), and the dissident locus \(D\). Since \(X\) is normal and \(\Sigma\) is integral, one has \(j_\ast\mathscr{I}_{\Sigma\cap U}=\mathscr{I}_{\Sigma}\). The remaining question is whether one can take \(m=1\) and whether the saturated algebra is generated in degree one; equivalently, whether \(\widetilde{X}\) is the ordinary blow-up of \(X\) along \(\Sigma\) with its reduced structure. This depends on the local models along the deeper strata, in particular along
\[
  q(M_{\vv}(S,H)^{\mathrm{sing}})\subset D\cap\Sigma,
\]
and should be approachable through the corresponding Ext-quiver descriptions. One may then ask how far the reduced blow-up description persists along the deeper strata, and how far the resulting model remains smooth over \(\Sigma\). Example~\ref{ex:generic-inherited-singularities} verifies this at the general points of the singular locus inherited from \(M_\vv(S,H)\).
\end{remark}

\begin{example}\label{ex:generic-inherited-singularities}
At general points of the singular locus inherited from \(M_\vv(S,H)\), namely along the shallowest strata of the dissident locus, the preceding local analysis can be made explicit. Let \(\mathfrak{t}\coloneqq (1,\vv_1;1,\vv_2)\) be a minimal polystable type as in Proposition~\ref{prop:polystable-strata-delPezzo}, and choose a general point \([\mathscr{F}]\in M_\vv^\mathfrak{t}(S,H)\). By the computation in the proof of Proposition~\ref{prop:fixed-locus-tau-delPezzo}, there is an isomorphism of germs
  \[
    (M_\vv(S,H),[\mathscr{F}])\cong
    (\overline{\mathcal{O}}^{\mathfrak{sl}_4}_{\mathrm{min}}\times \mathbb{C}^{2d-4},0).
  \]
In these coordinates, \(\tau\) fixes the smooth factor and acts on \(\overline{\mathcal{O}}^{\mathfrak{sl}_4}_{\mathrm{min}}\) by
  \[
    A\mapsto -JA^tJ^{-1},
  \]
where \(J\) is the standard symplectic form on \(\mathbb{C}^4\). On the transverse factor, the quotient is induced by \(A\mapsto A-JA^tJ^{-1}\in \mathfrak{sp}_4\); by~\cite{AbuafCarini2024SemistableHiggs}, this gives
\[
  \overline{\mathcal{O}}^{\mathfrak{sl}_4}_{\mathrm{min}}/\langle\tau\rangle
  \cong
  \overline{\mathcal{O}}^{\mathfrak{sp}_4}_2\subset \mathfrak{sp}_4
\]
where the right-hand side is the closure of the rank-\(2\) nilpotent orbit. Hence
  \[
    (X,q([\mathscr{F}]))\cong
    (\overline{\mathcal{O}}^{\mathfrak{sp}_4}_2\times \mathbb{C}^{2d-4},0).
  \]
Since \(\overline{\mathcal{O}}^{\mathfrak{sp}_4}_2\) is crepantly resolved by blowing up its reduced singular locus~\cite{AbuafCarini2024SemistableHiggs}, \(\phi\) is locally the blow-up of \(X\) along \(\Sigma\) near this point, and it is a resolution of singularities there.
\end{example}

\begin{remark}[Deformation types]\label{rmk:novelty-comparison}
The four-dimensional case \(d=1\) belongs to the known deformation class of Nikulin-type orbifolds, which arise as terminal models of quotients of fourfolds of \(\mathrm{K3}^{[2]}\)-type by symplectic involutions~\cite{Menet2015BeauvilleBogomolovLattice, CamereGarbagnatiKapustkaKapustka2024ProjectiveOrbifoldsNikulinType}.

For \(d\geq 2\), Theorem~\ref{thm:terminalisation-of-X} gives singular irreducible symplectic varieties with
\[
  (\dim \widetilde X,b_2(\widetilde X))=(2d+2,15+d),
  \qquad d=2,\ldots,9.
\]
To the best of our knowledge, these invariants do not occur among known examples: neither among Menet's higher-dimensional Fujiki examples~\cite{Menet2022DeformationClassesHKOrbifolds}, nor among the quotients classified in~\cite{BertiniEtAl2025TerminalizationsQuotients}.

The extremal case \(d=9\) gives a \(20\)-dimensional example with \(b_2=24\); this is currently the largest known second Betti number for irreducible symplectic varieties. This value is also achieved by O'Grady's smooth tenfold~\cite{OGrady1999DesingularizedModuliK3} and lies in the same large-\(b_2\) range as the \(42\)-dimensional relative compactified Jacobian of~\cite{LiuLiuXu2025IrreducibleSymplecticVarieties}, which has \(b_2\geq 24\). In forthcoming work~\cite{YamagishiLinInPreparationDualityInvolution}, another \(20\)-dimensional example with \(b_2=24\) and \(2^{20}\) isolated singular points is constructed as a quotient of a moduli space of vector bundles on a genus-\(2\) K3 surface; it is reasonable to expect that the two examples are closely related, likely via an autoequivalence on the K3 side.

The two degree-eight cases both have dimension \(18\) and \(b_2=23\). It would be interesting to determine whether the terminalisations are deformation equivalent. Before terminalisation, the corresponding quotients are not: the singularities inherited from the wall moduli space \(M_\vv(S,H)\), encoded by the polystable-type Hasse diagrams of Figures~\ref{fig:singular-locus-Bl1P2} and~\ref{fig:singular-locus-P1xP1}, already separate the blow-up and quadric cases.
\end{remark}

\pagestyle{plain}
\bibliographystyle{alpha}
\bibliography{references}
\end{document}